\documentclass[11pt]{article}
\usepackage{enumerate}
\usepackage{amsfonts,amsmath,amssymb,amsthm,bbm,bm}
\usepackage{mathrsfs}
\usepackage{mathtools}
\usepackage[dvipsnames]{xcolor}
\usepackage{soul}
\setstcolor{red}
\usepackage{graphicx}
\usepackage{caption}
\usepackage{subcaption}
\usepackage{float}
\usepackage{tikz}
\usepackage{tikz-cd}
\usetikzlibrary{matrix}
\numberwithin{equation}{section} 

\usepackage[style=alphabetic,backend=biber, sorting = nyt, maxalphanames=99,minalphanames=99,maxbibnames=99]{biblatex}
\usepackage{sepfootnotes}
\usepackage[hyperfootnotes=false]{hyperref}

\usepackage{thmtools}
\usepackage{hyperref}
\usepackage[capitalise]{cleveref}

\newtheorem{theorem}{Theorem}[section]
\newtheorem{corollary}[theorem]{Corollary}

\newtheorem{lemma}[theorem]{Lemma}
\newtheorem{proposition}[theorem]{Proposition}

\theoremstyle{remark}
\newtheorem{remark}[theorem]{Remark}

\theoremstyle{definition}
\newtheorem{definition}[theorem]{Definition}
\newtheorem{example}[theorem]{Example}

\newcommand{\E}{\mathbb{E}}
\newcommand{\R}{\mathbb{R}}

\newcommand{\bL}{\mathbf{L}}

\newcommand{\bO}{\mathbf{O}}

\newcommand{\calF}{\mathcal{F}}

\newcommand{\W}{\mathcal{W}}
\newcommand{\AW}{\mathcal{AW}}

\newcommand{\cpl}{\mathrm{Cpl}}

\newcommand{\tr}{\mathrm{tr}}

\newcommand{\diag}{\mathrm{diag}}

\newcommand{\fp}[1]{{\mathbb #1}}

\newcommand{\cbullet}{\vcenter{\hbox{\scalebox{0.4}{$\bullet$}}}}

\title{The geometry of the adapted Bures--Wasserstein space}
\author{Beatrice Acciaio\thanks{Department of Mathematics, ETH Z\"{u}rich, Switzerland. \emph{beatrice.acciaio@math.ethz.ch}}, Daniel Bartl\thanks{Department of Mathematics, Department of Statistics and Data Science, National University of Singapore, Singapore. \emph{bartld@nus.edu.sg}}, Anne Grass\thanks{Department of Mathematics, National University of Singapore, Singapore. \emph{grassa@nus.edu.sg}}, Songyan Hou\thanks{Department of Mathematics, ETH Z\"{u}rich, Switzerland. \emph{songyan.hou@math.ethz.ch}}, Gudmund Pammer\thanks{Department of Mathematics, TU Graz, Austria. \emph{gudmund.pammer@tugraz.at}}}

\date{\today}
\begin{document}
\maketitle

\begin{abstract}
    The adapted Bures–Wasserstein space consists of Gaussian processes endowed with the adapted Wasserstein distance. It can be viewed as the analogue of the classical Bures--Wasserstein space in optimal transport for the setting of stochastic processes, where the standard Wasserstein distance is inadequate and has to be replaced by its adapted counterpart.

    We develop a comprehensive geometric theory for the adapted Bures--Wasserstein space, thereby also providing the first results on the fine geometric structure of adapted optimal transport. In particular, we show that the adapted Bures--Wasserstein space is an Alexandrov space with non-negative curvature and provide explicit descriptions of tangent cones and exponential maps. 
    Moreover, we show that Gaussian processes satisfying a natural non-degeneracy condition form a geodesically convex subspace. 
    This subspace is characterized precisely by the property that its tangent cones are linear and hence coincide with the tangent space.
\end{abstract}

\section{Introduction and main results}
The theory of optimal transport provides a powerful and unifying framework for comparing probability measures through distances that reflect the geometry of the ambient space. A cornerstone of this theory is the 2-Wasserstein distance \(\mathcal W_2\), which equips the space \(\mathcal P_2(\mathbb R^d)\) of square integrable probability measures with a natural and well-behaved metric. For \(\mu,\nu\in\mathcal P_2(\mathbb R^d)\), it is defined as
\[
\mathcal W_2^2(\mu,\nu):=\inf_{X\sim\mu,\,Y\sim\nu}\mathbb E[|X-Y|^2].
\]
A fundamental insight is that the resulting metric space $(\mathcal P_2(\R^d),\mathcal W_2)$ allows for a geometric interpretation: it can be viewed both as a complete geodesic space with non-negative Alexandrov curvature \cite{sturm2006geometry,lott2009ricci} and, via the Otto calculus \cite{otto2001geometry}, formally as an infinite-dimensional Riemannian manifold. 
This viewpoint has driven major advances across partial differential equations, geometric analysis, probability theory, and data science; see, e.g., \cite{
AmGiSa08,
ChNiRi25,
figalli2021invitation,
peyre2019computational,
rahimian2019distributionally,
villani2008optimal}.

While the full Wasserstein geometry is often numerically and analytically challenging, restricting $\mathcal W_2$ to Gaussian measures yields the Bures--Wasserstein distance, which turns out to be highly tractable and central to   many applications; see, e.g., \cite{bunne2023schrodinger,chewi2020gradient,ChNiRi25,diao2023forward}.
When $\mu$, $\nu$ are centered Gaussian measures with covariance matrices $\Sigma_0, \Sigma_1 \in {\bf S}^d_+$, that is, in the cone of symmetric and positive semi-definite matrices, the transport problem admits an explicit, closed-form solution and endows ${\bf S}_+^d$ with the natural metric $d_{\rm BW}$,  \cite{dowson1982frechet,olkin1982distance}:
\[
  \W_2^2(\mu,\nu) =    {\rm tr}(\Sigma_0) + {\rm tr}(\Sigma_1) - 2{\rm tr}\Big( \big( \Sigma_0^{1/2} \Sigma_1 \Sigma_0^{1/2}\big)^{1/2}\Big)
  =:d_{\rm BW}^2(\Sigma_0,\Sigma_1).
\]
Historically, this distance has been studied independently in quantum theory and statistics.
Gelbrich \cite{gelbrich1990formula} famously identified that the Bures metric on covariance matrices coincides with the Wasserstein distance between Gaussian measures.
The geometric structure of the Bures--Wasserstein space $({\bf S}_+^d,d_{\rm BW})$ was first rigorously investigated by Takatsu \cite{Ta11}, 
who gave a complete characterization of the tangent cones.

However, when one is interested in stochastic processes or time-series data, both the Wasserstein and the Bures--Wasserstein distance fail to capture important features.
Indeed, if $X = (X_t)_{t = 1}^T$ and $Y = (Y_t)_{t = 1}^T $ are discrete-time stochastic processes with paths in $(\R^d)^T$, 
then the usual Wasserstein distance fails to capture how information is progressively revealed over time by the respective canonical filtrations.
see \cite[Figure~1]{backhoff2020adapted} for an illustrative example.
This leads to discontinuity of virtually any stochastic control problem with respect to $\W_2$, such as pricing and hedging, optimal stopping and utility maximization.

\begin{remark}
An equivalent formulation of $\W_2$, better suited to the setting of stochastic processes, is as follows.
Let $(\Omega,\mathcal{F},\mathbb{P})$ be a standard  probability space and define (with slight abuse of notation) the pseudo-distance on $L^2(\R^d)$ by
\begin{equation}
    \label{intro:W}
\mathcal{W}_2^2(X,Y) := \inf_{X'\sim X,\, Y'\sim Y} \E\bigl[|X'-Y'|^2\bigr].
\end{equation}
Then, clearly, $(\mathcal{P}_2(\R^d),\W_2)$ is isometric to the quotient space
$(L^2(\R^d)/_\sim,\W_2)$.
\end{remark}

\subsection{Background on adapted optimal transport}

The theory of adapted optimal transport proposes a structural resolution to address the weaknesses of classical optimal transport in the setting of stochastic processes by adequately taking into account the information encoded in the filtration.
To streamline the exposition, we introduce the adapted Wasserstein distance $\AW_2$ analogous to the Wasserstein distance $\W_2$ in \eqref{intro:W}, in a probabilistic manner.
We consider $d$-dimensional processes in discrete time $t=1,\dots,T$ and identify them with $dT$-dimensional random variables. 
With this in mind, let $(U_t)_{t = 1}^T$ be a sequence of independent, continuous random variables on a standard probability space $(\Omega,\mathcal F,\mathbb P)$,
let $(\mathcal F_t)_{t = 1}^T$ denote their generated canonical filtration  $\mathcal{F}_t=\sigma(U_s:s\leq t )$, and define $L^2_{\rm ad}((\R^d)^T)$ as the set of square-integrable, $(\mathcal F_t)_{t = 1}^T$-adapted, $\R^{dT}$-valued stochastic processes.

Two processes $Z=(Z_t)_{t=1}^T, Z'=(Z'_t)_{t=1}^T \in L^2_{\rm ad}((\R^d)^T)$ are said to have the same adapted distribution (denoted by $Z\sim_{\rm ad}Z'$) if their nested conditional laws coincide:
\begin{equation*}
    \label{eq:nested.condi}
    {\rm law}\big( {\rm law} (\dots {\rm law}(Z | \mathcal F_{T-1}) \dots  |\mathcal F_1 ) \big) =
    {\rm law}\big( {\rm law} (\dots {\rm law}(Z' | \mathcal F_{T-1}) \dots  |\mathcal F_1 ) \big).
\end{equation*}
By formalizing which assertions belong to the language of probability, Hoover and Keisler \cite{hoover1984adapted} made precise what it means for two processes  $Z$ and $Z'$ to have the same probabilistic properties, and showed that this holds if and only if $Z\sim_{\rm ad} Z'$.
With this in mind, the adapted Wasserstein distance is then defined as
\begin{equation*}
    \mathcal{AW}_2^2(X,Y) := \inf_{X' \sim_{\rm ad} X, \, Y' \sim_{\rm ad} Y} \mathbb E\big[ |X' - Y'|^2 \big].
\end{equation*}
The space $(L^2_{\rm ad}((\R^d)^T)/_{\sim_{\rm ad}}, \AW_2)$ is called the Wasserstein space of stochastic processes.
In particular, for $T=1$ this space is isometric to the Wasserstein space $(L^2(\R^d)/_{\sim},\W_2)$, 
but for $T>1$ the topology induced by $\mathcal{AW}_2$ is strictly finer than the one induced by $\W_2$: 
it metrizes the so-called adapted weak topology, which happens to be the correct topology for dynamic stochastic optimization problems, in the sense that it is the initial topology for basic dynamic optimization problems; see \cite{BaBePa21,backhoff2020all}.
We also refer to \cite{acciaio2024time,acciaio2020causal,bartl2021sensitivity,blanchet2024empirical,bonnier2023adapted,cont2024causal,hitz2024bicausal,jiang2024sensitivity,lassalle2018causal,pflug2014multistage,sauldubois2024first}.

Developing a geometric understanding for adapted optimal transport is essential to bringing this theory to the level of its classical counterpart.
However this direction remained largely open; at present, the only related result is an abstract Brenier theorem \cite{beiglbock2025brenier}, which characterizes optimal transports via convex potentials of law-invariant functionals on high-dimensional nested \(L^2\) spaces and establishes the existence of suitable reference measures (taking the role of the Lebesgue measure in classical optimal transport). However, many important classes of processes -- including the Gaussian processes studied here -- are singular with respect to these reference measures. 
This paper addresses this gap by developing a comprehensive geometric framework in the tractable setting of Gaussian processes.

\subsection{The adapted Bures--Wasserstein space}

Fix a stochastic basis $(\Omega,\mathcal{F}, \mathbb{P})$ supporting $T$ independent $d$-dimensional standard Gaussians 
$G_{t}\sim\mathcal{N}(0,{\rm Id})$, $t=1,\dots, T$ (here ${\rm Id}$ is the identity matrix on $\R^{d\times d}$), and set 
$\mathcal{G}_{t}:=  \sigma(G_{s}:s\leq t)$ for $t=1,\dots,T$.
In the present context of stochastic processes, it is natural to parametrize covariance matrices through block-lower triangular matrices
\[ \mathbf{L} := 
\left\{ \begin{pmatrix}
L_{1,1} & 0  &  0& \cdots & 0  \\
L_{2,1} & L_{2,2} & 0& \cdots & 0 \\
\vdots   & \vdots   & \cdots &\ddots & \vdots   \\
L_{T,1} & L_{T,2} &\cdots &  \cdots & L_{T,T}
\end{pmatrix} : 
L_{t,s} \in \mathbb{R}^{d \times d}\right\}.\]
Clearly, every $L\in{\bf L}$ gives rise to a process $X=(X_{t})_{t=1}^{T}$ with Gaussian law, by  setting
\[ X_{t}:= (LG)_t = \sum_{s=1}^{t}L_{t,s}G_{s}\]
for every $t=1,\dots, T$.
 Conversely, for every positive semi-definite $\Sigma\in \R^{dT\times dT}$ there is $L\in {\bf L}$ such that $X:=LG$ has Gaussian law with covariance matrix $\Sigma$.

From now on, we shall identify the stochastic process $X=LG$ with  $L\in   {\bf L}$ and set
\[  d_{\rm ABW}(L,M):= \mathcal{AW}_{2}(X,Y), \]
where $X=LG$ and $Y=MG$ for $L,M \in \mathbf L$.

\begin{remark}
With minor abuse of notation, throughout this paper we will often consider processes $(X_t)_{t=1}^T$ as vectors $(X_1,\dots, X_T)$, and write e.g.\ $LG$  as above. 

Moreover, all results trivially extend to Gaussian processes with arbitrary mean
$X_t = a_t + (LG)_t$, $a_t \in \R^d$, since the $\AW_2$-distance factorizes and reduces
to the Euclidean distance on the mean component; see \Cref{lem:AW-Gaussian-distance}.
\end{remark}

It follows from \cite{acciaio2025entropic,GuWo25} (see also \Cref{lem:AW-Gaussian-distance,lem:ABW-distance} below) that \( d_{\rm ABW}(L,M) \) admits an equivalent matrix optimization formulation with a closed-form solution, namely
\begin{align}
\label{eq:formula.d}
d_{\rm ABW}^2(L,M)
= \min_{O\in \bO} \|L - MO\|_{\rm F}^2
= \|L\|_{\rm F}^2 + \|M\|_{\rm F}^2 - 2\,\mathrm{tr}(S),
\end{align}
where \( \bO \subset \mathbf L\) 
denotes the set of block-diagonal matrices with $d\times d$-dimensional orthogonal diagonal blocks,
\( \|\cdot\|_{\rm F} \) denotes the Frobenius norm, and $S \coloneqq \mathrm{diag}(S_1,\dots,S_T)$, where $S_t$ is the singular value matrix of \( (M^\top L)_{t,t} \), for all $t =1,\dots,T$.
The set of optimizers of \eqref{eq:formula.d} -- denoted by ${\bf O}^\ast(L,M)$ -- turns out to be fundamental for understanding the adapted Bures-Wasserstein geometry.
In particular, as we will explain, $\bO^*(L,M)$ is related to the optimizers of $\AW_2(LG,MG)$. 
Moreover, we show that displacement interpolations are parametrized by this set; see Section~\ref{sec:AOT} for details and \Cref{lem:ABW-optimizers} for several explicit characterizations of ${\bf O}^\ast(L,M)$.

For the remainder of the introduction, 
we focus on describing the geometric structure of $d_{\rm ABW}$, noting that all results admit straightforward analogues in the $\mathcal{AW}_2$ framework.

\begin{definition}
The adapted Bures--Wasserstein space is the quotient space 
\[({\bf L}/_{{\bf O}} ,d_{\rm ABW}) ,\]
where $L\sim_{\bf O} M $ if and only if $L=MO$ for some $O\in {\bf O}$ (which is equivalent to $d_{\rm ABW}(L,M)=0$ by \eqref{eq:formula.d}).
\end{definition}

For  $L\in {\bf L}$, we write $[L]:= \{LO : O \in \mathbf O\}$ for the corresponding equivalent class, which is an element in ${\bf L}/_{\bf O}$.
With minor abuse of notation, we use $d_{\rm ABW}$ for both the distance on $\bL$ and the quotient distance on $\bL/_{\bO}$.

The following is the first main result of this article.

\begin{theorem}
\label{thm:ABW.alex.intro}
$ ({\bf L}/_{\bf O} ,d_{\rm ABW})  $ is an Alexandrov space with non-negative curvature.
\end{theorem}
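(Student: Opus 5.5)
The plan is to realize $(\bL/_{\bO},d_{\rm ABW})$ as the quotient of a Euclidean space by an isometric action of a compact group. Then I would invoke the classical fact that such quotients are Alexandrov spaces of non-negative curvature: the quotient of a complete nonnegatively curved Alexandrov space by the isometric action of a compact group, with the quotient metric, is again nonnegatively curved (Burago--Gromov--Perelman).

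\emph{Step 1: the Euclidean setting.} Equip $\bL$ with the Frobenius inner product $\langle A,B\rangle=\tr(A^\top B)$. Since $\bL$ is a linear subspace of $\R^{dT\times dT}$, it is a finite-dimensional Euclidean space, hence complete, geodesic and flat.

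\emph{Step 2: the group action.} The set $\bO$ of block-diagonal matrices with orthogonal $d\times d$ diagonal blocks is a group isomorphic to $O(d)^T$, hence compact. It acts on $\bL$ by right multiplication $L\mapsto LO$. For $L\in\bL$ and block diagonal $O$, the product $LO$ is again block lower triangular, so the action preserves $\bL$. It is isometric because
\[
\|LO-MO\|_{\rm F}=\|(L-M)O\|_{\rm F}=\|L-M\|_{\rm F}
\]
for orthogonal $O$.

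\emph{Step 3: identify the metrics.} By \eqref{eq:formula.d}, $d_{\rm ABW}(L,M)=\min_{O\in\bO}\|L-MO\|_{\rm F}$. By compactness of $\bO$ and invariance of the norm under the action, this equals
\[
\min_{O,O'\in\bO}\|LO'-MO\|_{\rm F},
\]
which is exactly the quotient (orbit) distance on $\bL/\bO$. The equivalence classes $[L]$ are precisely the orbits. Hence $(\bL/_{\bO},d_{\rm ABW})$ is isometric to the orbit space $\bL/\bO$ with its quotient metric. Orbits are closed, being compact, so this quotient metric is a genuine metric.

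\emph{Step 4: curvature of the quotient.} It remains to justify that the orbit space is a complete length space with curvature $\ge 0$. Completeness and local compactness follow because the quotient map is a $1$-Lipschitz surjection from a proper space with compact fibers.

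For the length (geodesic) property, take $[L],[M]$. Choose $O^*\in\bO^*(L,M)$ and take the segment $t\mapsto (1-t)L+tMO^*$. Its projection has length $\|L-MO^*\|_{\rm F}=d_{\rm ABW}(L,M)$. Since projection is $1$-Lipschitz, this projected curve is a geodesic.

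For curvature, I would verify the four-point (or comparison) condition directly, via the standard lifting argument. Given a geodesic in the quotient and a point $[P]$, lift the geodesic to the straight segment above and choose the representative $P$ minimizing the distance to the chosen midpoint lift. The Euclidean parallelogram identity upstairs then gives
\[
d^2([P],[m])\le \tfrac12 d^2([P],[L])+\tfrac12 d^2([P],[M])-\tfrac14 d^2([L],[M]).
\]
This holds because distances from $P$ to the endpoint lifts are $\ge$ the quotient distances, while the distance to the midpoint lift equals the quotient distance by choice of $P$. Here $[m]$ is the midpoint of the geodesic between $[L]$ and $[M]$. This is exactly the nonnegative-curvature midpoint inequality.

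The main obstacle is this last step: the inequality must be set up with the correct directions of inequality. That requires choosing representatives so that exactly the midpoint distance is realized, and using that the segment between $L$ and $MO^*$ projects to a geodesic. Everything else is routine once \eqref{eq:formula.d} is available.
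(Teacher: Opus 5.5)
Steps 1--3 are correct. $\bL$ is a Euclidean space, $\bO\cong\mathrm{O}(d)^T$ acts isometrically by right multiplication, and by \Cref{lem:ABW-distance} $d_{\rm ABW}$ is exactly the orbit metric. If you simply cite the Burago--Gromov--Perelman quotient theorem at that point, the proof is complete. That is a genuinely different route from the paper, which proves the statement by hand rather than relying on general theory. Your direct verification in Step 4, however, has two defects.

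First, the displayed inequality points the wrong way. With your choices, $\|P-m\|_{\rm F}=d_{\rm ABW}([P],[m])$, $\|P-L\|_{\rm F}\ge d_{\rm ABW}([P],[L])$, $\|P-MO^*\|_{\rm F}\ge d_{\rm ABW}([P],[M])$ and $\|L-MO^*\|_{\rm F}=d_{\rm ABW}([L],[M])$. The parallelogram identity then gives
\[
d_{\rm ABW}^2([P],[m])\ \ge\ \tfrac12 d_{\rm ABW}^2([P],[L])+\tfrac12 d_{\rm ABW}^2([P],[M])-\tfrac14 d_{\rm ABW}^2([L],[M]),
\]
which is \eqref{eq:semi.concavity} at $u=\frac12$. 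The ``$\le$'' you wrote is the CAT(0) inequality, and it fails here. In \Cref{rem:two.geodesics}, take $[m]=[L^+(\frac12)]=[{\rm Id}]$ and $[P]=[L^-(\frac12)]$. Then the left side is $2$ and the right side is $\frac12+\frac12-1=0$.

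Second, and more substantively, ``lift the geodesic to the straight segment above'' is asserted, not proved. What you actually established, and what your final paragraph invokes, is the converse: the projection of the segment from $L$ to $MO^*$, with $O^*\in\bO^*(L,M)$, is a geodesic. But \eqref{eq:semi.concavity} must hold along \emph{every} constant-speed geodesic, and geodesics in $\bL/_{\bO}$ are not unique (\Cref{rem:two.geodesics}). So checking only the geodesics you constructed is not enough. You need that every geodesic from $[L]$ to $[M]$ has the form $u\mapsto[(1-u)L+uMO^*]$ for some $O^*\in\bO^*(L,M)$. This is exactly \Cref{thm:identification-of-geodesics}, the one nontrivial ingredient of the paper's proof.

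You can cite that theorem or prove it directly, as follows. For $u_i=i2^{-k}$, choose $L_0=L$ and $L_{i+1}\in\gamma(u_{i+1})$ optimally aligned with $L_i$. Then $\|L_0-L_{2^k}\|_{\rm F}\le\sum_i\|L_{i+1}-L_i\|_{\rm F}=d_{\rm ABW}(\gamma(0),\gamma(1))\le\|L_0-L_{2^k}\|_{\rm F}$. Strict convexity of the Frobenius norm therefore puts all $L_i$ on the segment from $L$ to $C_k:=L_{2^k}$. A limit point $C$ of $(C_k)_k$ exists by compactness of $\bO$. Closedness of orbits and continuity of $\gamma$ then give $\gamma(u)=[(1-u)L+uC]$ for all $u$, with $\|L-C\|_{\rm F}=d_{\rm ABW}(\gamma(0),\gamma(1))$.

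With this lift in hand, choose the representative of $[P]$ optimally aligned with $(1-u)L+uC$ for general $u$, not only $u=\frac12$. Your computation is then the paper's argument in dual form. The paper writes the geodesic as $[L+uV]$ and the third point as $[N]$, bounds $\sup_{O\in\bO}\langle L+uV,NO\rangle_{\rm F}$ by convexity, and uses $d_{\rm ABW}(L,L+V)=\|V\|_{\rm F}$. You instead fix the maximizing $O$ and expand with the parallelogram identity.
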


Here, a metric space is called Alexandrov space with non-negative curvature if it is locally complete, geodesic, and geodesic triangles in the space are no thinner than their Euclidean comparison triangles;
we refer to \cite{AmGiSa08,AlKaPe24,BuGrPe92}  and Sections  \ref{sec:AOT.gen} and \ref{sec:alex.background}   below for some more background. 
We only note here that Alexandrov spaces with curvature bounded from below carry a rich Riemannian-like structure -- a structure that we will explore in the following section.

\begin{remark}
\label{rem:filtration}
In contrast to prior works on the adapted Wasserstein distance between Gaussian processes (see, \cite{acciaio2025entropic,GuWo25}), we endow the processes $X$ with the Gaussian filtration  $(\mathcal{G}_t)_{t=1}^T$, rather than the canonical filtration $\mathcal{F}^{X}_t=\sigma(X_s:s\leq t)$ generated by the process itself.
Clearly, if the covariance of $X$ has full rank (equivalently, if $L$ has full rank), then  $\mathcal{F}^{X}=\mathcal{G}$.
In general, that will not be the case. 
For instance
\begin{align*}
    \label{eq:intro.example.matric}
L:=
\begin{pmatrix}
0 & 0 \\
1 & 1
\end{pmatrix}
\quad\text{and}\quad
M:=
\begin{pmatrix}
0 & 0 \\
0 & \sqrt{2}
\end{pmatrix}
\end{align*}
yield processes $X=LG$ and $Y=MG$ with the same distribution, whilst $\mathcal{L}(X_{2}\mid \mathcal{G}_{1})
= \mathcal{N}(G_{1},1)$ and $
\mathcal{L}(Y_{2}\mid \mathcal{G}_{1})
= \mathcal{N}(0,2)$.

Crucially, the present choice of the filtration turns out to be essential, as the space of Gaussian processes  endowed with the natural filtrations is not geodesic: there are situations when there is a unique geodesic between full rank $L$ and $M$ which, at intermediate times, degenerates; see \Cref{rem:full.rank.not.geo} below and \cite{acciaio2025entropic} for more details.
\end{remark}

\subsection{Adapted Bures--Wasserstein geometry for regular processes}

In the classical Bures--Wasserstein theory,  restricting to strictly positive-definite covariance matrices reveals the most elegant geometric behaviour.
The analogue is true in the present adapted setting, when working with appropriate “regular’’ subspaces. 
Here, however, the appropriate notion of “regularity’’ for subsets of ${\bf L}$ is inherently different -- shaped by the filtration rather than just the law, as we shall explain in what follows.

Define
\[ {\bf L }^{\rm reg} := \left\{ L\in {\bf L} : (L^{\top } L )_{t,t} \text{ is positive definite for all } t=1,\dots, T\right\}.\]
If $L\in {\bf L}$ has full rank, then $L\in {\bf L}^{\rm reg}$, but ${\bf L }^{\rm reg}$ contains many more elements:

\begin{example}
In the setting of Remark \ref{rem:filtration} and using its notation, we have that $L \in {\bf L }^{\rm reg}$ while $M\notin  {\bf L }^{\rm reg}$.
Moreover, recall that $\mathcal{L}(X_{2}|\mathcal{G}_{1}) = \mathcal{N}(G_{1},1)$ and $\mathcal{L}(Y_{2}|\mathcal{G}_{1}) = \mathcal{N}(0,2)$, hence the filtration $(\mathcal{G}_t)_{t=1}^T$ contains some information about the process $X$, while for $Y$ the $G_{1}$-coordinate is never used. 
\end{example}

Whereas the subset of full-rank matrices in ${\bf L}$ fails to form a geodesic space (see Remark \ref{rem:full.rank.not.geo}), not only does ${\bf L}^{\rm reg}/_{\bf O}$ form a geodesically complete subset,  its infinitesimal geometry can be described explicitly and tractably.
Indeed, recall that since ${\bf L}/_{\bf O}$ is an Alexandrov space with non-negative curvature, 
tangent cones exist at every point (see Section~\ref{sec:alex.background}); 
we denote them by 
\[
({\bf T}([L]), d_{{\bf T}([L])}),
\]
where $d_{{\bf T}([L])}$ denotes the canonical metric on the tangent cone, 
for $[L]\in {\bf L}/_{\bf O}$.
To formulate their structure, for $L\in{\bf L}$ set 
\[ 
\mathcal{V}(L) = \left\{ V \in {\bf L}: (V^\top L)_{t,t} \text{ is symmetric for all } t = 1, \dots, T \right\}.
\]
The following is our second main result:

\begin{theorem}
\label{thm:L.reg.convex.subset.L.intro}
	${\bf L }^{\rm reg}/_{\bf O}$ is a geodesically convex subset of ${\bf L}/_{\bf O}$.
    Moreover, for every $L\in{\bf L}^{\rm reg} $, there exists an isometry 
    \[ \iota_L \colon   \left( {\bf T}([L]) , d_{{\bf T}([L])} \right) \to (\mathcal{V}(L) , \|\cdot\|_{\rm F}).\]
\end{theorem}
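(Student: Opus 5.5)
Geodesics in ${\bf L}/_{\bf O}$ are displacement interpolations: for $L,M\in{\bf L}$ and $O\in{\bf O}^\ast(L,M)$, the curve $L_s:=(1-s)L+sMO$, $s\in[0,1]$, is a constant-speed geodesic between $[L]$ and $[M]$. I take this, and the fact that every geodesic arises this way, as provided by Section~\ref{sec:AOT} and the proof of \Cref{thm:ABW.alex.intro}. Replacing $M$ by $MO$, I may assume $\mathrm{Id}\in{\bf O}^\ast(L,M)$. By \Cref{lem:ABW-optimizers}, this means $(M^\top L)_{t,t}$ is symmetric positive semi-definite for every $t$.

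\emph{Geodesic convexity.} Fix $t$ and write $A:=(L^\top L)_{t,t}$, $B:=(M^\top M)_{t,t}$, $C:=(M^\top L)_{t,t}\succeq 0$. Block lower triangularity gives
\[
(L_s^\top L_s)_{t,t}=(1-s)^2A+s^2B+s(1-s)(C+C^\top).
\]
All three summands are positive semi-definite. If $L,M\in{\bf L}^{\rm reg}$, then $A,B\succ0$, so the sum is positive definite for every $s\in[0,1]$. Hence the whole segment stays in ${\bf L}^{\rm reg}$. This is only needed for one choice of optimizer, but since every geodesic has this form the argument covers all of them.

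\emph{Tangent cone.} Define $\iota_L$ on directions of geodesics: a geodesic from $[L]$ toward $[M]$ with optimizer normalised as above is sent to $V:=M-L$, and the scaling $\lambda\cdot$direction is sent to $\lambda V$. Then $V\in\mathcal V(L)$, because $(V^\top L)_{t,t}=C-A$ is symmetric. The key computation is
\[
\lim_{s\downarrow0}\frac{d_{\rm ABW}(L+sV,L+sW)}{s}=\|V-W\|_{\rm F}\qquad\text{for }V,W\in\mathcal V(L).
\]
This identity shows that $\iota_L$ is an isometry onto its image, since the cone metric is exactly this limit. It also shows that $\iota_L$ is well defined. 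For the upper bound take $O=\mathrm{Id}$ in \eqref{eq:formula.d}. For the lower bound, let $O_s\in{\bf O}^\ast(L+sV,L+sW)$. I will show $O_s=\mathrm{Id}+O(s)$ with an antisymmetric first-order part. This is where regularity enters: $(L^\top L)_{t,t}\succ0$ makes the polar factor of $((L+sW)^\top(L+sV))_{t,t}$ unique and smooth near $s=0$. Expanding $\|L+sV-(L+sW)O_s\|_{\rm F}^2$, the cross term against an antisymmetric perturbation $K$ is $\mathrm{tr}\bigl((L^\top(V-W))_{t,t}\,K_t\bigr)$. It vanishes because $(L^\top(V-W))_{t,t}$ is symmetric, by the defining property of $\mathcal V(L)$. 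So the first-order contribution of $O_s$ drops out, giving the lower bound.

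\emph{Surjectivity and closure.} Given $V\in\mathcal V(L)$, set $M_s:=L+sV$. For small $s$, $(M_s^\top L)_{t,t}=A+s(V^\top L)_{t,t}$ is symmetric and positive definite. Hence $\mathrm{Id}\in{\bf O}^\ast(L,M_s)$, and the segment from $L$ to $M_s$ is a geodesic whose direction maps to $sV$. Rescaling therefore reaches every element of $\mathcal V(L)$. The tangent cone is the completion of the cone over geodesic directions, and $\mathcal V(L)$ is a closed linear subspace of a finite-dimensional space. Extending the isometry by continuity then gives the bijection.

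\emph{Main obstacle.} I expect the lower-bound step to be hardest: controlling the optimizers $O_s$ uniformly as $s\to0$. I would first try continuity of the orthogonal polar factor at invertible matrices. If the blocks are only positive definite in the $(L^\top L)_{t,t}$ sense, I would use instead a first-order perturbation bound $\|O_s-\mathrm{Id}\|\lesssim s$, derived from the singular-value characterization in \Cref{lem:ABW-optimizers}.
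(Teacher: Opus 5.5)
Your geodesic-convexity argument is the paper's own: every geodesic has the form $[(1-u)L+uMP]$ by \Cref{thm:identification-of-geodesics}, and $(L(u)^\top L(u))_{t,t}$ is a sum of positive semi-definite terms whose two extreme terms are positive definite. For the tangent cone, however, you take a genuinely different route, and it is correct.

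The paper never treats the regular case directly. It proves the angle formula $\cos\angle(\gamma,\eta)=\sup_{O\in\mathbf{O}^\ast(L,L)}\langle V,WO\rangle_{\rm F}/(\|V\|_{\rm F}\|W\|_{\rm F})$ for arbitrary $L$ (\Cref{thm:angle}). It gets the hard inequality by fixing $w$, sending $v\downarrow 0$, and using only that accumulation points of optimizers in $\mathbf{O}^\ast(L+vV,L+wW)$ lie in $\mathbf{O}^\ast(L,L+wW)=\mathbf{O}^\ast(L,L)$ (\Cref{lem:acc.point}, \Cref{prop:O.equal.O}). The monotonicity of comparison angles, which comes from non-negative curvature, justifies taking the limits one at a time. \Cref{thm:tangent.intro} then follows via \Cref{prop:cone.VL} and \Cref{prop:complete}, and the regular case is read off from $\mathbf{O}^\ast(L,L)=\{{\rm Id}\}$ (\Cref{cor:O=Id}).

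You instead compute the diagonal limit $\lim_{s\downarrow0}d_{\rm ABW}(L+sV,L+sW)/s=\|V-W\|_{\rm F}$ by a first-order expansion. This needs uniqueness of the optimizer and a rate for it, and regularity supplies exactly that. The optimizer is the orthogonal polar factor of $((L+sW)^\top(L+sV))_{t,t}$, which is a smooth perturbation of the invertible matrix $(L^\top L)_{t,t}$. The symmetry built into $\mathcal V(L)$ then kills the cross term.

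Your hedge in the last paragraph is unnecessary. $(L^\top L)_{t,t}\succ0$ is precisely the invertibility the polar-factor argument needs. In fact the first-order part $K$ vanishes: the $O(s)$ part of that block, $(W^\top L)_{t,t}+(L^\top V)_{t,t}$, is symmetric, and the polar factor of a symmetric positive definite matrix is ${\rm Id}$.

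What each route buys: yours is more elementary, with no curvature monotonicity and no compactness argument for optimizers. It also gives the sharper statement that $V\mapsto[L+V]$ is infinitesimally isometric on $\mathcal V(L)$. The paper's argument is the one that survives for irregular $L$, where optimizers are non-unique and need not converge to ${\rm Id}$, so no expansion of $O_s$ is available.

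Finally, your map is already onto $\mathcal V(L)$, whose unit sphere is compact. So the space of directions is complete and no extension by continuity is needed; this is the content of \Cref{prop:complete}.
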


We stress that Theorem~\ref{thm:L.reg.convex.subset.L.intro} constitutes the first result of this kind for the adapted Wasserstein distance.
The mapping $\iota_L$ fixes the representative $L$ of the class $[L]$ and 
identifies all elements relative to this reference.
In particular, if $O\in{\bf O}$ and $M = LO$ 
so that $M\in[L]$, then $\iota_M(\cdot) = \iota_L(\cdot ) O$.

Moreover, it turns out that the exponential and logarithmic maps admit a particularly simple description in this setting.
To that end, recall that
\[
\exp_{[L]} \colon {\bf T}([L]) \longrightarrow {\bf L}/_{\bf O} 
\]
denotes the exponential map at the equivalence class $[L] \in {\bf L}/_{\bf O}$, sending a tangent vector to the endpoint of the (unique) constant-speed $d_{\rm ABW}$-geodesic starting at $[L]$ with the given initial velocity.
Note that $\exp_{[L]}$ may not be defined on the entire space ${\bf T}([L])$ because constant speed geodesics may not be indefinitely extensible.

\begin{proposition}
\label{prop:L.reg.exponential.intro}
For every $L\in{\bf L}^{\rm reg} $ and $V\in\mathcal{V}(L)$ there is $r_0>0$ such that, for all $r\in[0,r_0]$,
	\[\exp_{[L]} (\iota_L^{-1}(rV)) = [L+rV].\]
\end{proposition}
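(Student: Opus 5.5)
The argument has three parts. First, for small $r$ the curve $r\mapsto[L+rV]$ is a constant-speed geodesic, since $\mathrm{Id}\in\bO^\ast(L,L+rV)$. Second, its initial direction corresponds to $V$ under $\iota_L$. Third, uniqueness of geodesics with given initial velocity identifies it with $\exp_{[L]}$.

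\textbf{Step 1: optimality of the identity.} For $V\in\mathcal V(L)$ and $L\in\bL^{\rm reg}$, write $M_r:=L+rV$. Then
\[
(M_r^\top L)_{t,t}=(L^\top L)_{t,t}+r(V^\top L)_{t,t}.
\]
This block is symmetric because $V\in\mathcal V(L)$. It is positive definite for $r\le r_0$, where $r_0>0$ is chosen so that $r_0\|(V^\top L)_{t,t}\|<\lambda_{\min}((L^\top L)_{t,t})$ for all $t$; such an $r_0$ exists by regularity of $L$. A symmetric positive semi-definite block has trace equal to the sum of its singular values. By \eqref{eq:formula.d} and the characterization of $\bO^\ast$ in \Cref{lem:ABW-optimizers}, the identity is therefore an optimizer, and
\[
d_{\rm ABW}(L,M_r)=\|rV\|_{\rm F}.
\]
Moreover, for $0\le s\le u\le r_0$ we have $(M_u^\top M_s)_{t,t}=(L^\top L)_{t,t}+(s+u)\,\mathrm{sym}(\cdot)+su(V^\top V)_{t,t}$, where I use symmetry of $(V^\top L)_{t,t}$; note that $(L^\top V)_{t,t}=((V^\top L)_{t,t})^\top=(V^\top L)_{t,t}$. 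This block is again symmetric, and it is positive definite for small parameters. Hence $d_{\rm ABW}(M_s,M_u)=(u-s)\|V\|_{\rm F}$, so $r\mapsto[M_r]$ is a constant-speed geodesic on $[0,r_0]$. Alternatively, and more cheaply, the constant-speed property follows from the triangle inequality, since $\|M_s-M_u\|_{\rm F}$ upper bounds the distance and the endpoint distances are additive.

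\textbf{Step 2: matching the direction.} Here I recall how $\iota_L$ is built in the proof of \Cref{thm:L.reg.convex.subset.L.intro}. A geodesic $[L_r]$ from $[L]$ has a representative with $L_r=L+rW+o(r)$, chosen with $\mathrm{Id}\in\bO^\ast(L,L_r)$, and $\iota_L$ sends its direction to $W\in\mathcal V(L)$, with scaling by speed. Applying this to $M_r$ gives $\iota_L(\text{direction of }[M_\cdot]\text{ with speed})=V$, that is, $[M_\cdot]$ has initial velocity $\iota_L^{-1}(V)$. Its point at time $1\cdot r$ on the geodesic with initial velocity $\iota_L^{-1}(rV)$ is then $[L+rV]$.

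\textbf{Step 3 (main obstacle): uniqueness.} I must know that the geodesic from $[L]$ with a given initial tangent vector is unique, so that $\exp_{[L]}$ is well defined and equal to our curve. In Alexandrov spaces geodesics do not branch. If two geodesics share an initial direction, then the angle between them is zero, and non-branching combined with non-negative curvature comparison yields coincidence on their common domain. To make this rigorous I would use the first variation formula: comparing $d([M_r],\gamma(r))$ via the cosine law in the tangent cone shows it is $o(r)$. For geodesics in a curvature $\ge0$ space, this gives $d=0$, using monotonicity of comparison angles. Alternatively, I would appeal directly to the convention in the definition of $\exp_{[L]}$. Should the tangent-cone identification of Step 2 need care with representatives, I would note that $\iota_{LO}=\iota_L(\cdot)O$, so the choice of representative is harmless.
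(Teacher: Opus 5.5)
Your proposal is correct and follows the paper's route. Step 1 is exactly what \Cref{lem:V-psd} and \Cref{prop:cone.VL} provide: for small $r$, ${\rm Id}\in\mathbf O^\ast(L,L+rV)$, so $rV\in\mathbf V(L)$. Step 2 is the identity \eqref{eq:exponential.structure} combined with $\mathbf O^\ast(L,L)=\{{\rm Id}\}$. In Step 3 you address the well-definedness of $\exp_{[L]}$, which the paper builds into its definition. Your monotonicity-of-comparison-angles argument for it is valid, since a zero angle forces every comparison angle to vanish; within the paper it also follows from \Cref{thm:angle}, because zero angle and equal length force $V=WO$ with $O\in\mathbf O^\ast(L,L)=\{{\rm Id}\}$.
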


Again, $\iota_L$ simply fixes the representative $L$ of $[L]$.
The following diagram should help to explain the structure:
\begin{center}
\begin{tikzcd}
    ({\bf T}([L]), d_{{\bf T}([L])}) \arrow[r, "\exp_{[L]}"] \arrow[d, "\iota_L"'] 
    & ({\bf L}/_{\bf O},d_{\rm ABW})  \\
        (\mathcal V(L),\| \cdot \|_{\rm F}) \arrow[r, "\exp_L"] 
    & (\mathbf L, \| \cdot \|_{\rm F}) \arrow[u, "{[\,\cdot\,]}"']
\end{tikzcd}
\end{center}
Here, $\exp_L$ is the  exponential map on the flat space $(\mathbf L,\|\cdot\|_{\rm F})$ which is given by $\exp_L(V) = L+V$.
The diagram shows that the exponential map $\exp_{[L]}$ on the factor space $\mathbf{L}/_{\mathbf O}$ is consistent with the isometry $\iota_L$ on the tangent space and the exponential map $\exp_{L}$ on the underlying flat space, that is, $[\exp_L \circ \iota_L] = \exp_{[L]}$.

We next describe the logarithmic map, the inverse of the exponential map:
\[
\log_{[L]} \colon {\bf L}/_{\bf O} \longrightarrow {\bf T}([L]).
\]
Whenever a unique geodesic from $[L]$ to $[M]$ exists, $\log_{[L]}([M])$ is given by the initial velocity of that geodesic.
As in the case of the exponential map, $\log_{[L]}([M])$ need not be defined for all $[M]\in \mathbf{L}/_\mathbf{O}$. The following result therefore characterizes when the logarithmic map exists and provides its explicit form.
To this end, recall that $\mathbf{O}^\ast(L,M)\subset \mathbf{O}$ denotes the set of optimizers of~\eqref{eq:formula.d}.

\begin{theorem} \label{thm:L.reg.uniq.geodesic}
\label{thm:L.reg.log.intro}
	For every $L\in {\bf L}^{\rm reg}$ and $M\in {\bf L} $, the following are equivalent:
    \begin{enumerate}[(i)]
        \item  There exists a unique geodesic between $[L]$ and $[M]$;
        \item  ${\rm rk}((M^\top M)_{t,t}) = {\rm rk}((M^\top L)_{t,t})$ for all $t=1,\dots,T$.
    \end{enumerate}
    If either is satisfied,  $\log_{[L]}([M])$ exists and is given by 
    \[ \log_{[L]}([M]) = \iota_L^{-1}( MP -L),\]
 	for some $P\in {\bf O}^\ast(L,M)$.
\end{theorem}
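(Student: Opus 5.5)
Geodesics in the quotient come from lifting to the flat space $\mathbf L$, so the plan is to relate geodesics in $\mathbf L/_{\mathbf O}$ to the optimizer set $\mathbf O^\ast(L,M)$ and then count the optimizers.

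\emph{Step 1: classify geodesics.} Every constant-speed geodesic from $[L]$ to $[M]$ should have the form $t\mapsto[(1-t)L+tMP]$ with $P\in\mathbf O^\ast(L,M)$. One direction is immediate. Since $d_{\rm ABW}(L,M)=\|L-MP\|_{\rm F}$, the straight segment in $(\mathbf L,\|\cdot\|_{\rm F})$ projects to a curve of length at most $d_{\rm ABW}([L],[M])$, and is therefore a geodesic. For the converse, I would take a geodesic $\gamma$ and a midpoint $[Z]=\gamma(1/2)$, with representative chosen so that $\|L-Z\|_{\rm F}=d_{\rm ABW}(L,Z)$. Then I would pick $Q\in\mathbf O^\ast(Z,M)$. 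Equality in the triangle inequality
\[\|L-MQ\|_{\rm F}\le\|L-Z\|_{\rm F}+\|Z-MQ\|_{\rm F}=d_{\rm ABW}(L,M)\]
forces $Z=\tfrac12(L+MQ)$ and $Q\in\mathbf O^\ast(L,M)$. Iterating over dyadic times and using continuity identifies $\gamma$ with a projected segment. Because $L$ is fixed as representative, the lift is consistent.

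\emph{Step 2: count the geodesics.} Uniqueness of the geodesic is then equivalent to the following: for all $P,P'\in\mathbf O^\ast(L,M)$, the curves $[(1-t)L+tMP]$ and $[(1-t)L+tMP']$ coincide. At $t$ small, $L^{\rm reg}$ is open, so these points are regular. Two curves that agree there must differ by some $O_t\in\mathbf O$ with
\[(1-t)L+tMP=((1-t)L+tMP')O_t.\]
For regular $L$ and small $t$ I would argue that this forces $O_t={\rm Id}$ and hence $MP=MP'$. This would use that $(X^\top X)_{t,t}$ is positive definite, so $X$ determines the block orthogonal factor through the diagonal blocks. Consequently, uniqueness holds if and only if $MP$ is independent of the choice of $P\in\mathbf O^\ast(L,M)$.

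\emph{Step 3: the rank condition.} Using the explicit characterization of $\mathbf O^\ast(L,M)$ in \Cref{lem:ABW-optimizers}, $P=\diag(P_1,\dots,P_T)$ is optimal exactly when each $P_t$ maximizes ${\rm tr}((M^\top L)_{t,t}P_t)$ over orthogonal matrices. That is, $P_t$ is a polar factor of $(M^\top L)_{t,t}=:A_t$. This factor is unique on ${\rm range}(A_t)$ and arbitrary on its orthogonal complement, which has dimension $d-{\rm rk}(A_t)$. The $t$-th block column of $MP$ is $M_{\cdot,t}P_t$, where $M_{\cdot,t}$ is the block column with $M_{\cdot,t}^\top M_{\cdot,t}=(M^\top M)_{t,t}$. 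Since $A_t=M_{\cdot,t}^\top L_{\cdot,t}$, we have ${\rm range}(A_t)\subseteq{\rm range}((M^\top M)_{t,t})$. Hence $M_{\cdot,t}P_t$ is independent of the free part of $P_t$ if and only if $M_{\cdot,t}$ vanishes on the corresponding free subspace, which happens if and only if these ranges coincide, i.e.\ their ranks are equal. This gives (i)$\Leftrightarrow$(ii). Note that regularity of $L$ matters here only through Step 2.

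\emph{Step 4: the logarithm.} Under (ii), the unique geodesic is $[L+t(MP-L)]$. We have
\[(V^\top L)_{t,t}=(P^\top M^\top L)_{t,t}-(L^\top L)_{t,t}\quad\text{with } V=MP-L.\]
This is symmetric because $P_t^\top A_t^\top$ is positive semidefinite by the polar-factor optimality, so $V\in\mathcal V(L)$. By \Cref{prop:L.reg.exponential.intro}, this curve equals $\exp_{[L]}(\iota_L^{-1}(tV))$ for small $t$, so its initial velocity is $\iota_L^{-1}(MP-L)$.

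The main obstacle is Step 2: showing that distinct $MP$ really give distinct geodesics. This requires that the quotient map is locally injective modulo $\mathbf O$ near a regular $L$. Without that, the rotations $O_t$ could absorb the difference.
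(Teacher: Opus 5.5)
\textbf{Overall.} Your architecture matches the paper's. Step~1 corresponds to \Cref{thm:identification-of-geodesics} and \Cref{cor:unique.geo}, and Steps~2--3 correspond to \Cref{lem:log-map-uniqueness}. Your block-column rank count in Step~3 is correct up to a few transposes. It is also a more direct argument than the paper's detour through $\mathbf O^\ast(M,LP^\top)=\mathbf O^\ast(M,M)$.

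\textbf{The gap: Step~2, i.e.\ (i)$\Rightarrow$(ii).} You never show that $MP\neq MP'$ produces two distinct geodesics, and the reason you give cannot work. Positive definiteness of $(X^\top X)_{s,s}$ does not pin down $O_t$. The whole orbit $\{X'O:O\in\mathbf O\}$ of a regular $X'$ consists of regular matrices, so regularity is perfectly compatible with $X=X'O_t$ and $O_t\neq{\rm Id}$. Smallness of $t$ and openness of $\mathbf L^{\rm reg}$ do not help either.

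\textbf{The missing idea.} Both $X$ and $X'$ are in optimal position relative to $L$, and that position is unique when $L$ is regular. Since the projected segments are geodesics,
\[
\|L-X'O_t\|_{\rm F}=\|L-X\|_{\rm F}=t\,d_{\rm ABW}(L,M)=d_{\rm ABW}(L,X'),
\]
so $O_t\in\mathbf O^\ast(L,X')$. Moreover,
\[
(X'^\top L)_{s,s}=(1-t)(L^\top L)_{s,s}+t\,P_s'^\top(M^\top L)_{s,s}
\]
is positive definite for every $t\in[0,1)$. Indeed, it is a positive definite matrix plus a positive semi-definite one, by regularity of $L$ and \Cref{lem:ABW-optimizers}(ii). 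Hence $\mathbf O^\ast(L,X')=\{{\rm Id}\}$ by \Cref{cor:O=Id}; equivalently, the polar factor of an invertible matrix is unique. So $O_t={\rm Id}$ and $MP=MP'$, for any $t\in(0,1)$. This is exactly the regular case of \Cref{prop:O.equal.O}, the key lemma the paper relies on.

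\textbf{Related hole in Step~1.} The same fact is silently needed behind your sentence ``because $L$ is fixed as representative, the lift is consistent.'' Gluing the representatives obtained at different times requires $\mathbf O^\ast\bigl(L,\tfrac12(L+MQ)\bigr)=\{{\rm Id}\}$.

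For (ii)$\Rightarrow$(i) you can bypass this gluing entirely. For each fixed $v$, your triangle-inequality argument gives $\gamma(v)=[(1-v)L+vMQ_v]$ for some $Q_v\in\mathbf O^\ast(L,M)$. Under (ii), your Step~3 shows that $MQ_v$ does not depend on the choice of $Q_v$, which already identifies $\gamma$.
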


In particular, in the setting of \Cref{thm:L.reg.uniq.geodesic}, the geodesic from $[L]$ to $[M]$ is given by $([L+u(MP-L)])_{u\in[0,1]}$, 
for $P\in {\bf O}^\ast(L,M)$.

We end this section on regular $L$'s by giving a simple and practically relevant sufficient condition for the existence of the log map.
To state it, for a symmetric matrix $A$, we set $\lambda_{\min}(A)$ to be its smallest eigenvalue.

\begin{corollary} \label{cor:L.reg.uniq.geodesic}
    For every $L\in {\bf  L}^{\rm reg}$ and $M\in {\bf L} $ satisfying 
    \[ d_{\rm ABW}^{2}([L],[M]) < \min_{t=1,\dots, T}  \lambda_{\min}\left((L^{\top}L)_{t,t}\right),\] there exists a unique geodesic between $[L]$ and $[M]$.
\end{corollary}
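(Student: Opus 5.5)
Via \Cref{thm:L.reg.uniq.geodesic}: since $L\in{\bf L}^{\rm reg}$, it suffices to verify condition (ii), namely ${\rm rk}((M^\top M)_{t,t}) = {\rm rk}((M^\top L)_{t,t})$ for every $t$. Always ${\rm rk}((M^\top L)_{t,t}) \le {\rm rk}((M^\top M)_{t,t})$. Indeed, writing $M_{\cdot,t}$ and $L_{\cdot,t}$ for the $t$-th block columns, $(M^\top L)_{t,t} = M_{\cdot,t}^\top L_{\cdot,t}$ has rank at most ${\rm rk}(M_{\cdot,t}) = {\rm rk}(M_{\cdot,t}^\top M_{\cdot,t})$. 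So the task reduces to showing that $(M^\top L)_{t,t}$ has rank at least ${\rm rk}(M_{\cdot,t})$, i.e.\ that $\ker (L_{\cdot,t}^\top M_{\cdot,t}) = \ker M_{\cdot,t}$.

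Pick an optimizer $P\in{\bf O}^\ast(L,M)$ and replace $M$ by $MP$. This changes neither $[M]$ nor the distance, and it does not change the ranks in (ii), since $(P^\top M^\top L)_{t,t} = P_t^\top (M^\top L)_{t,t}$ and $(P^\top M^\top M P)_{t,t} = P_t^\top (M^\top M)_{t,t} P_t$. By \eqref{eq:formula.d} we then have
\[
\sum_t \|L_{\cdot,t}-M_{\cdot,t}\|_{\rm F}^2 = d^2_{\rm ABW}([L],[M]) =: \delta^2 < \min_t \lambda_{\min}\big((L^\top L)_{t,t}\big) =: \lambda .
\]
I expect (to be confirmed through \Cref{lem:ABW-optimizers}) that optimality gives $(M^\top L)_{t,t}$ symmetric positive semidefinite, because $P_t$ is the polar factor. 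Then for a unit vector $x$ with $M_{\cdot,t}x\neq 0$ I want to show $x^\top M_{\cdot,t}^\top L_{\cdot,t} x>0$, but this quadratic-form route looks awkward. A cleaner route avoids symmetry altogether.

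Suppose $y\neq 0$ lies in $\ker(M_{\cdot,t}^\top L_{\cdot,t})$, i.e.\ $L_{\cdot,t}y \perp {\rm range}(M_{\cdot,t})$. Write $E_t := L_{\cdot,t}-M_{\cdot,t}$. Then
\[
\|L_{\cdot,t}y\|^2 = \langle L_{\cdot,t}y, (M_{\cdot,t}+E_t)y\rangle = \langle L_{\cdot,t}y, E_ty\rangle \le \|L_{\cdot,t}y\|\,\|E_t\|_{\rm op}\|y\|,
\]
so $\lambda^{1/2}\|y\| \le \|L_{\cdot,t}y\| \le \|E_t\|_{\rm F}\|y\| \le \delta\|y\|$. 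This contradicts $\delta^2<\lambda$. Hence $M_{\cdot,t}^\top L_{\cdot,t}$ is injective, so it has full rank $d$. Its rank is then $d\ge {\rm rk}((M^\top M)_{t,t})$, which together with the reverse inequality gives equality, and \Cref{thm:L.reg.uniq.geodesic} yields the unique geodesic. This also shows $M\in{\bf L}^{\rm reg}$.

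The main obstacle is just the bookkeeping: justifying that one may pass to the optimal representative $MP$, so that the Frobenius distance equals $d_{\rm ABW}$, and checking that the rank condition is invariant under right multiplication by ${\bf O}$. Both follow directly from \eqref{eq:formula.d} and the block-diagonal structure of ${\bf O}$.
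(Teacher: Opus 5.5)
Your proof is correct. Its core estimate is the one the paper uses. The paper sets $V:=MP-L$ with $P\in\mathbf{O}^\ast(L,M)$, bounds $\|V_{\cbullet,t}\|_{\rm op}^2\le\|V\|_{\rm F}^2=d_{\rm ABW}^2([L],[M])<\lambda_{\min}((L^\top L)_{t,t})$, and invokes \Cref{lem:V-psd}. The proof of that lemma is exactly your Cauchy--Schwarz bound, phrased as positive definiteness of $((MP)^\top L)_{t,t}$ rather than injectivity.

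Where you differ is the ending. The paper concludes $\mathbf{O}^\ast(L,MP)=\{{\rm Id}\}$, hence $\mathbf{O}^\ast(L,M)=\{P\}$. So $\{MP-L : P\in\mathbf{O}^\ast(L,M)\}$ is a singleton and \Cref{cor:unique.geo} applies directly. You instead verify the rank criterion of \Cref{thm:L.reg.uniq.geodesic}. This needs the extra bookkeeping of invariance under $M\mapsto MP$, and it routes through the heavier \Cref{lem:log-map-uniqueness}.

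The paper's ending gives the stronger by-product that the optimizer $P$ is unique. Yours makes explicit that both ranks equal $d$, i.e.\ $M\in\mathbf{L}^{\rm reg}$. The two are only one step apart: your injectivity of $((MP)^\top L)_{t,t}$, combined with its positive semi-definiteness from \Cref{lem:ABW-optimizers}, gives positive definiteness, and \Cref{cor:O=Id} then yields the paper's conclusion $\mathbf{O}^\ast(L,MP)=\{{\rm Id}\}$.
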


\begin{remark}
We briefly comment on the apparent discrepancy between the structure of
\(\mathcal{V}(L)\) and the tangent space in the classical Bures--Wasserstein setting.
Consider \(T=1\), so that \(d_{\mathrm{ABW}} = d_{\mathrm{BW}}\) and the adapted and
classical Bures--Wasserstein spaces coincide. Let \(L\) be full rank and set
\(\Sigma = L^\top L\). In the present formulation, the tangent space is
$
\mathcal{V}(L)
\;=\;
\{\, V \in \mathbb{R}^{d\times d} : V^\top L \text{ is symmetric} \,\},
$
whereas in the classical formulation the tangent space at \(\Sigma\) is
$
\mathcal{W}(\Sigma)
\;=\;
\{\, W \in \mathbb{R}^{d\times d} : W \text{ is symmetric} \,\}.
$

This difference is solely due to a change of parametrization. 
Indeed, classical Bures--Wasserstein geodesics are given by
\[
[0,1]\ni u\mapsto
\Sigma^{1/2} + u (W-\mathrm{Id}) \Sigma^{1/2}
\]
corresponding to the law of $X+ u(W-{\rm Id})X$
where $X\sim\mathcal{N}(0,\Sigma)$,
while in the present setting geodesics take the form
\[
[0,1]\ni u\mapsto L + u (V-L)
\]
corresponding to the law of  $LG+ u(V-L)G = X + u(VL^{-1}-{\rm Id})X$.
In other words, in the $d_{\mathrm{BW}}$ setting, geodesics are given by displacement interpolations driven by optimal transport maps acting on $X$. 
In contrast, in the present setting the geodesics are defined via optimal transport maps acting on $G=L^{-1}X$. 
Defining $W := V L^{-1}$ establishes a one-to-one correspondence between the two parametrizations, as $V L^{-1}$ is symmetric if and only if $V^\top L$ is symmetric.
\end{remark}

\subsection{Adapted Bures--Wasserstein geometry for irregular processes}

For general (non-regular) $L\in \mathbf{L}$, a full geometric theory is still available, but the characterization of tangent structures becomes more subtle. 
In this setting, one no longer obtains genuine tangent spaces; instead, one must work  with tangent cones (see Section~\ref{sec:alex.background}).

To formulate our results, for $V,W \in \bL$, set 
\[ d_{{\bf O}^{\ast}(L,L)}(V,W) := \inf_{ O \in {\bf O}^{\ast}(L,L) } \|V- WO\|_{\rm F}.\]
We endow $\mathcal{V}(L)$ with the pseudo-distance $d_{{\bf O}^{\ast}(L,L)}$, denote the corresponding quotient space  by $\mathcal{V}(L)/_{{\bf O}^{\ast}(L,L)}$, and write $[V]_{{\bf O}^{\ast}(L,L)}$ for elements in that space.

\begin{theorem}
\label{thm:tangent.intro}
        For every $L\in{\bf L}$,  there is an isometry
        \[ \iota_L\colon  \left( {\bf T}([L]) , d_{{\bf T}([L])} \right) 
        \to \left(\mathcal{V}(L)/_{{\bf O}^{\ast}(L,L)}, d_{{\bf O}^{\ast}(L,L)}\right).\]
\end{theorem}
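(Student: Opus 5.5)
We identify the tangent cone at $[L]$ with the Euclidean tangent cone of the quotient $\bL/_{\bO}$. The space of directions at $[L]$ consists of equivalence classes of geodesics emanating from $[L]$. The first step is therefore to parametrize all such geodesics. Using \eqref{eq:formula.d} and the description of displacement interpolations through $\bO^\ast(\cdot,\cdot)$ (Section~\ref{sec:AOT}), every constant-speed geodesic from $[L]$ to $[M]$ has the form $u\mapsto[L+u(MP-L)]$ for some $P\in\bO^\ast(L,M)$.

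The first-order optimality condition for $P$ in $\min_{O\in\bO}\|L-MO\|_{\rm F}^2$ states that $((MP)^\top L)_{t,t}$ is symmetric and positive semidefinite for each $t$. Hence $V:=MP-L$ satisfies $(V^\top L)_{t,t}=((MP)^\top L)_{t,t}-(L^\top L)_{t,t}$, which is symmetric, so $V\in\mathcal V(L)$. Conversely, for $V\in\mathcal V(L)$ and small $r>0$, the curve $u\mapsto[L+uV]$, $u\in[0,r]$, should be a geodesic. The plan is to prove this by checking that $\mathrm{Id}\in\bO^\ast(L+rV,L)$ for $r$ small. This optimality test has to be done blockwise: the condition is that $(L^\top(L+rV))_{t,t}=(L^\top L)_{t,t}+r(L^\top V)_{t,t}$ be positive semidefinite. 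Where $(L^\top L)_{t,t}$ is degenerate this may fail. In that case we would first replace $V$ by $VO$ for a suitable $O\in\bO^\ast(L,L)$, i.e.\ a block rotation acting only on $\ker (L^\top L)_{t,t}$ (the $O$'s with $LO=L$), so that the relevant block becomes positive semidefinite. This is exactly why the quotient by $\bO^\ast(L,L)$ appears.

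Next we compute angles and distances on the cone. For two such geodesics with directions $V,W\in\mathcal V(L)$, the cone distance is
\[
d_{{\bf T}([L])}(V,W)=\lim_{r\downarrow 0}\frac{d_{\rm ABW}(L+rV,L+rW)}{r}=\lim_{r\downarrow0}\min_{O\in\bO}\frac{\|L+rV-(L+rW)O\|_{\rm F}}{r}.
\]
The key lemma is that minimizers $O_r$ accumulate only on $\bO^\ast(L,L)=\{O\in\bO: LO=L\}$. If $LO_r$ stays away from $L$, the norm is of order $1$, not of order $r$. We then write $O_r=O\,(\mathrm{Id}+ r A_r)$ with $O\in\bO^\ast(L,L)$, expand, and use the symmetry of $(V^\top L)_{t,t}$ and $(W^\top L)_{t,t}$. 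This should show that the first-order term $rLA_r$ can only increase the squared norm to leading order: it is orthogonal, or has nonnegative inner product, against $V-WO$ up to $o(r)$. Together with compactness of $\bO$, this gives the limit $\min_{O\in\bO^\ast(L,L)}\|V-WO\|_{\rm F}=d_{{\bf O}^\ast(L,L)}(V,W)$.

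Finally we define $\iota_L$ on directions by sending the geodesic $[L+uV]$ to $[V]_{\bO^\ast(L,L)}$, and extend it homogeneously to the cone. Well-definedness and injectivity follow from the distance formula: two directions are equal iff their distance is $0$. Surjectivity onto $\mathcal V(L)/_{\bO^\ast(L,L)}$ follows from the short-time geodesic construction above. Completeness is automatic, because the target is a closed quotient of a finite-dimensional space and the cone is the completion of the space of directions.

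The main obstacle is the degenerate case. There are two parts to it: showing that every $V\in\mathcal V(L)$, after an $\bO^\ast(L,L)$-rotation, generates a genuine geodesic; and the first-order expansion near $\bO^\ast(L,L)$, where $\bO^\ast(L,L)$ is a nontrivial group acting on kernels. For both, I would first treat a single block ($T=1$) using the polar/singular value decomposition, and then argue blockwise using the diagonal structure of \eqref{eq:formula.d}.
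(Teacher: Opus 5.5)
Your overall architecture matches the paper's. Geodesics from $[L]$ are $[L+uV]$ with $V\in\mathbf V(L)$, $\mathcal V(L)$ is the cone generated by $\mathbf V(L)$, and the cone metric comes from optimizers accumulating on $\bO^\ast(L,L)$. However, the step you single out as the main obstacle for surjectivity is misdiagnosed, and your remedy would not work.

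You claim that for $V\in\mathcal V(L)$ the block $(L^\top L)_{t,t}+r(L^\top V)_{t,t}$ may fail to be positive semidefinite where $(L^\top L)_{t,t}$ is degenerate. It cannot. For $x\in\ker(L_{\cbullet,t})=\ker((L^\top L)_{t,t})$ one has $(V^\top L)_{t,t}x=V_{\cbullet,t}^\top L_{\cbullet,t}x=0$. Since $(V^\top L)_{t,t}$ is symmetric, its range then lies in $\ker(L_{\cbullet,t})^\perp$. So $(V^\top L)_{t,t}$ acts only on $\ker(L_{\cbullet,t})^\perp$, where all eigenvalues of $(L^\top L)_{t,t}$ are at least $\lambda^+_{\min}((L^\top L)_{t,t})$. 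Hence the block is positive semidefinite once $r\|(V^\top L)_{t,t}\|_{\rm op}\le\lambda^+_{\min}((L^\top L)_{t,t})$, with no rotation needed. This symmetry-kills-the-kernel observation is the missing idea; it is what \Cref{lem:V-psd} and \Cref{prop:cone.VL} establish.

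Conversely, had the test failed, your fix could not help. $O\in\bO^\ast(L,L)$ means $L_{\cbullet,t}O_t=L_{\cbullet,t}$, so $O_t$ fixes $\ker(L_{\cbullet,t})^\perp$ pointwise and maps $\ker(L_{\cbullet,t})$ into itself. Because $(L^\top V)_{t,t}=(V^\top L)_{t,t}$ vanishes on that kernel, $(L^\top VO)_{t,t}=(L^\top V)_{t,t}O_t=(L^\top V)_{t,t}$; the block is unchanged. So, as written, your plan does not establish surjectivity of $\iota_L$. Relatedly, the quotient by $\bO^\ast(L,L)$ has nothing to do with positivity. It appears because $V$ and $VO$ with $LO=L$ generate the same curve, $[L+uVO]=[(L+uV)O]=[L+uV]$ (cf.\ \Cref{thm:identification-of-geodesics}).

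Your computation of the cone distance is a genuinely different and viable route. The paper (\Cref{thm:angle}) lets $v\downarrow0$ with $w$ fixed, which is legitimate by monotonicity of comparison angles under non-negative curvature. It then uses $\mathrm{Id}\in\bO^\ast(L,L+wW)$ to bound $\langle L,(L+wW)O^{v,w}\rangle_{\rm F}\le\langle L,L+wW\rangle_{\rm F}$, so it never expands the optimizers. Your diagonal limit with $O_r=O(\mathrm{Id}+rA_r)$ needs $A_r$ bounded, i.e.\ ${\rm dist}(O_r,\bO^\ast(L,L))=O(r)$, which accumulation alone does not give.

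You can bypass this. For $V,W\in\mathcal V(L)$ and $O\in\bO$, set $A_t:=(L^\top L)_{t,t}$, $S^V_t:=(V^\top L)_{t,t}$, $S^W_t:=(W^\top L)_{t,t}$. Then
\[
\|L+rV-(L+rW)O\|_{\rm F}^2=2\sum_{t=1}^T\tr\big((A_t+rS^V_t+rS^W_t)(\mathrm{Id}-O_t)\big)+r^2\|V-WO\|_{\rm F}^2 .
\]
By the kernel observation, $A_t+r(S^V_t+S^W_t)$ is positive semidefinite for small $r$, so each trace is nonnegative by von Neumann's inequality. At a minimizer $O_r$ this gives $d_{\rm ABW}^2(L+rV,L+rW)\ge r^2\|V-WO_r\|_{\rm F}^2$. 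The trace sum at $O_r$ is also at most $r^2\|V-W\|_{\rm F}^2$, so every accumulation point $O$ satisfies $\sum_t\tr(A_t(\mathrm{Id}-O_t))=\tfrac12\|L-LO\|_{\rm F}^2=0$, i.e.\ $O\in\bO^\ast(L,L)$. Together with the trivial upper bound from restricting to $\bO^\ast(L,L)$, this yields $d_{\bO^\ast(L,L)}(V,W)$ without curvature monotonicity — but again only via the fact you missed.
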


We shall see that  ${\bf O}^\ast(L,L)=\{\rm Id\}$ for $L\in{\bf L}^{\rm reg}$ and hence $d_{{\bf O}^\ast(L,L)}$ is simply the metric induced by $\|\cdot\|_{\rm F}$ in that case; thus \Cref{thm:tangent.intro} is consistent with \Cref{thm:L.reg.convex.subset.L.intro}.
It turns out that the descriptions of the exponential and logarithmic map are similar to those in the regular case, modulo appropriate equivalence classes.

\begin{theorem}
\label{thm:exponential.general.intro}
For every $L\in{\bf L}$ and $V\in \mathcal{V}(L)$, 
 \[ \exp_{[L]}( \iota_L^{-1}([V]_{{\bf O}^{\ast}(L,L)})) = [L+V] .\]
	Moreover, for every $L,M\in {\bf L}$, the following are equivalent:
    \begin{enumerate}[(i)]
        \item  There exists a unique geodesic between $[L]$ and $[M]$;
        \item  ${\rm rk}((M^\top M)_{t,t}) = {\rm rk}((M^\top L)_{t,t})$ for all $t=1,\dots,T$.
    \end{enumerate}
    If either is satisfied,  $\log_{[L]}([M])$ exists and is given by 
    \[ \log_{[L]}([M]) =  \iota_L^{-1}\left( [MP -L]_{{\bf O}^{\ast}(L,L)} \right),\]
 	for some $P\in {\bf O}^\ast(L,M)$.
\end{theorem}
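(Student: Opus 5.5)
The plan is to work entirely in the flat space $(\mathbf L,\|\cdot\|_{\rm F})$ and push everything down to the quotient $\mathbf L/_{\mathbf O}$ by the isometric action of the compact group $\mathbf O$. By \eqref{eq:formula.d},
\[
d_{\rm ABW}([L],[M])=\min_{O\in\mathbf O}\|L-MO\|_{\rm F}.
\]
So geodesics in the quotient are projections of straight segments $u\mapsto L+u(MP-L)$ with $P\in\mathbf O^\ast(L,M)$. The key fact is that such a segment stays \emph{horizontal}: for $0\le u<1$, every intermediate point $L_u$ satisfies ${\rm Id}\in\mathbf O^\ast(L_u,MP)$. This is the usual first-variation argument for quotients by isometric group actions.

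\textbf{Exponential map.} Take $V\in\mathcal V(L)$. The symmetry of $(V^\top L)_{t,t}$ is exactly the first-order optimality condition for ${\rm Id}$ in $\min_O\|L-(L+V)O\|_{\rm F}$. I would upgrade this to global optimality blockwise. The problem decouples over $t$ into orthogonal Procrustes problems $\max_{O_t}\mathrm{tr}\big(O_t^\top ((L+V)^\top L)_{t,t}\big)$. Here
\[
((L+V)^\top L)_{t,t}=(L^\top L)_{t,t}+(V^\top L)_{t,t},
\]
which is symmetric, and ${\rm Id}$ is optimal precisely when this matrix is positive semidefinite. That need not hold for $V$ itself, but it does hold for $rV$ with $r$ small only if the kernel is compatible; in general I would instead show directly that $u\mapsto[L+uV]$ is a geodesic on a short interval. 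Then $\exp_{[L]}(\iota_L^{-1}([V]))$ is defined through the identification $\iota_L$, constructed in \Cref{thm:tangent.intro} via exactly these directions. So the first claim amounts to checking that $\iota_L$ sends the initial velocity of $u\mapsto[L+uV]$ to $[V]_{\mathbf O^\ast(L,L)}$. Compatibility with $\mathbf O^\ast(L,L)$ holds because $[L+VO]=[LO+VO]=[L+V]$ whenever $LO=L$.

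\textbf{Uniqueness of geodesics.} The plan is to show that geodesics between $[L]$ and $[M]$ correspond bijectively to classes of $MP$ with $P\in\mathbf O^\ast(L,M)$, modulo the stabilizer $\mathbf O^\ast(L,L)$ acting on the left, i.e.\ modulo identifying $MP$ with $MPO$ for those $O$ that fix $L$. Any geodesic lifts to a horizontal segment, by the strict convexity of $\|\cdot\|_{\rm F}^2$ together with the midpoint characterization in the Alexandrov space. So uniqueness is equivalent to the segments $[L+u(MP-L)]$ being the same for all $P\in\mathbf O^\ast(L,M)$. Use the blockwise Procrustes description from \Cref{lem:ABW-optimizers}: writing $(M^\top L)_{t,t}$ in polar form, $P_t$ is unique on the range of $(M^\top L)_{t,t}$ and free, in $O(\cdot)$, on its orthogonal complement. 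Only the action of $P_t$ on $\mathrm{range}((M^\top M)_{t,t})$ affects the $t$-th block column of $MP$.

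\begin{itemize}
\item If ${\rm rk}((M^\top M)_{t,t})={\rm rk}((M^\top L)_{t,t})$, then $MP$ is independent of the choice of $P$, giving uniqueness.
\item If the ranks differ, the freedom on the complement produces distinct points $MP\neq MP'$ whose midpoints with $L$ are not $\mathbf O$-equivalent. This gives at least two geodesics.
\end{itemize}

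I expect this second bullet, the non-uniqueness direction, to be the main obstacle. One must exclude that $L+\tfrac12(MP-L)$ and $L+\tfrac12(MP'-L)$ differ by some $O\in\mathbf O$. I would compare the $t$-th diagonal Gram blocks of the two midpoints: they agree on the common part, but their cross terms with $L$ differ on the extra directions. So they have different singular data and cannot be related by right multiplication by an orthogonal block.

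\textbf{Logarithm.} Under the rank condition, the unique geodesic is $u\mapsto[L+u(MP-L)]$. By horizontality $MP-L\in\mathcal V(L)$: the matrix $(P^\top M^\top L)_{t,t}$ is positive semidefinite, hence symmetric, and $(L^\top L)_{t,t}$ is symmetric. The first part then identifies its initial velocity as $\iota_L^{-1}([MP-L]_{\mathbf O^\ast(L,L)})$, which is the claimed formula for $\log_{[L]}([M])$.
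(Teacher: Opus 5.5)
\paragraph{Gap: the direction (i)$\Rightarrow$(ii).} The step that fails is your second bullet. You correctly set up the criterion that geodesics correspond to $\{MP-L : P\in\mathbf O^\ast(L,M)\}$ modulo $\mathbf O^\ast(L,L)$. However, the bullet only produces $MP\neq MP'$, whereas non-uniqueness needs $MP\neq MP'O$ for every $O\in\mathbf O^\ast(L,L)$.

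These two conditions differ exactly when $\operatorname{rk}((L^\top L)_{t,t})=\operatorname{rk}((M^\top L)_{t,t})$. In that case $\ker((M^\top L)_{t,t})=\ker(L_{\cbullet,t})$ (\Cref{rmk:rank-im-ker}). The optimal $P_t$ is determined only up to right multiplication by orthogonal $R_t$ fixing $\ker((M^\top L)_{t,t})^\perp$ pointwise, and these $R_t$ are exactly the $t$-th blocks of $\mathbf O^\ast(L,L)$. Hence $MP'-L=(MP-L)R$ with $LR=L$, and both choices give the same geodesic.

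Your Gram-block test cannot separate the midpoints either. For every $P\in\mathbf O^\ast(L,M)$ the cross term $P_t^\top(M^\top L)_{t,t}$ equals $((L^\top M)_{t,t}(M^\top L)_{t,t})^{1/2}$, so nothing involving $L$ varies with $P$.

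\paragraph{Counterexample.} Take the example preceding \Cref{thm:identification-of-geodesics}: $d=1$, $T=2$, $L=\mathrm{diag}(1,0)$, $M=-\mathrm{Id}$. The two choices give $MP=\mathrm{diag}(1,\pm1)$ and midpoints $\mathrm{diag}(1,\pm\tfrac12)$, which differ by $\mathrm{diag}(1,-1)\in\mathbf O^\ast(L,L)$. In fact $\mathbf L/_{\mathbf O}$ is here isometric to $(\mathbb R^2/\{\pm1\})\times[0,\infty)$ via $(L_{1,1},L_{2,1},L_{2,2})\mapsto([(L_{1,1},L_{2,1})],|L_{2,2}|)$. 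The classes $[L]$ and $[M]$ differ only in the second factor, so the geodesic is unique. Yet $\operatorname{rk}((M^\top M)_{2,2})=1\neq 0=\operatorname{rk}((M^\top L)_{2,2})$.

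So (i)$\Rightarrow$(ii) is false for non-regular $L$, and no argument can close your bullet. Note that (i) is symmetric in $L,M$ while (ii) is not: swapping $L$ and $M$ above makes (ii) hold.

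\paragraph{Corrected criterion.} Carrying out your orbit analysis properly gives the right condition: for every $t$, $\operatorname{rk}((M^\top L)_{t,t})$ equals $\operatorname{rk}((L^\top L)_{t,t})$ or $\operatorname{rk}((M^\top M)_{t,t})$. For $L\in\mathbf L^{\rm reg}$ this reduces to (ii), so \Cref{thm:L.reg.uniq.geodesic} is unaffected.

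The paper's own proof has the same defect. In \Cref{lem:log-map-uniqueness} it passes from ``$\{MP-L\}/_{\mathbf O^\ast(L,L)}$ is a singleton'' to ``$\{MP\}$ is a singleton''. But $\mathbf O^\ast(L,L)$-invariance only makes the set a union of orbits, and a single orbit can contain several points. Your first bullet, (ii)$\Rightarrow$(i), and the log formula under (ii) are fine.

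\paragraph{Smaller issue: the exponential map.} The identity $\exp_{[L]}(\iota_L^{-1}([V]_{\mathbf O^\ast(L,L)}))=[L+V]$ requires $u\mapsto[L+uV]$ to be a geodesic on all of $[0,1]$, i.e.\ $V\in\mathbf V(L)$ (\Cref{lem:geodesics.one.side}). That is also all the paper's argument covers. Your short-interval argument yields only small multiples of $V$. For $V\in\mathcal V(L)\setminus\mathbf V(L)$ the exponential can be undefined; for example $T=d=1$, $L=1$, $V=-2$.

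The kernel compatibility you worried about is automatic. Symmetry of $(V^\top L)_{t,t}$ forces $\ker(L_{\cbullet,t})\subseteq\ker((V^\top L)_{t,t})$, which is exactly what \Cref{lem:V-psd} exploits.
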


As already indicated, all the results mentioned in this introduction for $({\bf L}/_{\rm O},d_{\rm ABW})$ naturally have analogues in the language of adapted optimal transport.
We provide the background needed on adapted optimal transport and key additional results in the next section.

\section{Adapted optimal transport} \label{sec:AOT}

In the current paper, we opted to introduce the adapted Wasserstein distance for processes on a fixed filtered probability space.
We find that this formulation is conceptually simpler than the classical one and, in particular, more accessible to a  broader audience.
Indeed, recall that
\begin{equation*}
    \label{eq:def.AW.recall}
    \mathcal{AW}_2^2(X,Y) := \inf_{X' \sim_{\rm ad} X, \, Y' \sim_{\rm ad} Y} \mathbb E\left[ |X' - Y'|^2 \right]
\end{equation*}
where $X',Y'$ are defined on a fixed standard probability space $(\Omega, \calF, \mathbb P)$ with filtration $(\mathcal{F}_t)_{t=1}^T$ generated by independent continuous random variables $(U_t)_{t=1}^T$.
We now put this definition in relation with the classical definition of $\mathcal{AW}$ in terms of bicausal couplings.

The adapted Wasserstein distance, as introduced in \cite{BaBePa21}, is defined for 5-tuples
\[
    \fp X = \left( \Omega^\fp X, \calF^\fp X, \mathbb P^\fp X, (\calF_t^\fp X)_{t = 1}^T, X \right),
\]
comprising a filtered probability space $(\Omega^\fp X, \calF^\fp X, \mathbb P^\fp X, (\calF_t^\fp X)_{t = 1}^T)$ and an $(\calF_t^\fp X)_{t = 1}^T$-adapted stochastic process $X = (X_t)_{t = 1}^T$.
We denote the class of all such filtered processes by $\mathcal{FP}$, and by $\mathcal{FP}_2$ the subclass of processes with a finite second moment.
A coupling $\pi$ between two filtered processes $\fp X$ and $\fp Y$ is a probability measure on the product space $(\Omega^\fp X \times\Omega^\fp Y, \calF^\fp X \otimes \calF^\fp Y)$ having marginals $\mathbb P^\fp X$ and $\mathbb P^\fp Y$.
Such a coupling $\pi$ is termed bicausal -- denoted by $\pi \in \cpl_{\rm bc}(\fp X,\fp Y)$ -- if it satisfies the conditional independence properties:
\begin{align*}
    \calF^\fp X_T \text{ is independent of }\calF^\fp Y_t \text{ given }\calF_t^\fp X\text{ under }\pi, \\
    \calF^\fp Y_T \text{ is independent of }\calF^\fp X_t \text{ given }\calF_t^\fp Y\text{ under }\pi,
\end{align*}
for all $t \in \{1, \dots, T-1\}$. 
Note that we tacitly identify sub-$\sigma$-algebras $\cal A \subseteq \calF^{\fp X}$ (resp.\ $\cal B \subseteq \calF^{\fp Y}$) with their cylindrical extensions $\mathcal A \otimes \{\emptyset,\Omega^{\fp Y}\}$ (resp.\ $\{\emptyset,\Omega^{\fp X}\} \otimes \mathcal B$) on the product space.
The adapted Wasserstein distance $\mathbb{AW}_2$ between $\fp X$ and $\fp Y$ is defined in \cite{BaBePa21} by
\begin{equation}
    \label{eq:classical.AW}
    \mathbb{AW}_2^2(\fp X,\fp Y) := \inf_{\pi \in \cpl_{\rm bc}(\fp X,\fp Y)} \E_\pi\left[\|X-Y\|^2\right].
\end{equation}
In that paper it is established that $\mathbb{AW}_2$ constitutes a semi-metric on $\mathcal{FP}_2$ and that the Wasserstein space of stochastic processes $({\rm FP}_2,\AW_2)$, defined as the quotient space of $\mathcal{FP}_2$ with respect to the equivalence relation $\sim_{\mathbb{AW}}$ endowed with the quotient metric, 
is a Polish metric space. Importantly, the infimum in \eqref{eq:classical.AW} is attained. 

Rather than working with abstract filtered processes, we adopt a ``Lagrangian'' perspective by fixing a standard probability space $(\Omega, \calF, \mathbb P)$ equipped with a filtration $(\calF_t)_{t=1}^T$.
Let $L_{\rm ad}^2((\R^d)^T)$ denote the space of square-integrable, $(\calF_t)_{t=1}^T$-adapted processes on this fixed space.
Any process $X \in L_{\rm ad}^2((\R^d)^T)$ naturally induces an element in $\mathcal{FP}_2$ via the embedding:
\[
    \iota_{\mathcal{FP}}(X) := \big( \Omega, \calF, \mathbb P, (\calF_t)_{t=1}^T, X \big).
\]
As shown in \cite{beiglbock2025brenier}, the standard probability space $(\Omega, \calF, \mathbb P)$
is sufficiently rich to represent the entire Wasserstein space of stochastic processes. Specifically, for every $\fp X \in \mathcal{FP}_2$, there exists a representative $X' \in L_{\rm ad}^2((\R^d)^T)$ such that
\[
    \mathbb{AW}_2\big( \iota_{\mathcal{FP}}(X'), \fp X \big) = 0
\]
and the map $\iota_{\mathcal{FP}}$ is an isometry between $L_{\rm ad}^2((\R^d)^T)$ and the space 
${\rm FP}_2$, i.e.,
\[
    \mathbb{AW}_2(\iota_{\mathcal{FP}}(X),\iota_{\mathcal{FP}}(Y)) = \AW_2(X,Y).
\]

This correspondence extends to couplings.
Indeed, two adapted processes
$X', Y' \in L_{\rm ad}^2((\R^d)^T)$ give rise to a bicausal coupling between their induced filtered processes,
\[
    \iota_{\mathcal{FP}}(X,Y):=\big( \Omega \times \Omega, \calF \times \calF, ((\omega \mapsto (\omega,\omega))_\# \mathbb P, (\calF_t \otimes \calF_t)_{t = 1}^T, (X'_t,Y'_t)_{t = 1}^T \big).
\]
Conversely, any bicausal coupling $\pi \in \cpl_{\rm bc}(\fp X, \fp Y)$ defines a filtered process $\fp Z$ on the product space,
\[
    \fp Z := \big( \Omega^\fp X \times \Omega^\fp Y, \calF^\fp X \otimes \calF^\fp Y, \pi, (\calF_t^\fp X\otimes \calF_t^\fp Y)_{t = 1}^T, (X, Y) \big),
\]
which, in turn, admits a pair $(X',Y')$ in $L_{\rm ad}^2((\R^d \times \mathbb R^d)^T)$ with $ \mathbb{AW}_2(\iota_{\mathcal FP}(X',Y'), \fp Z) = 0$.

In summary, the space $L^2_{\rm ad}((\R^d)^T)$ provides a Lagrangian representation rich enough to capture the topological and geometric structure of adapted transport.

\subsection{Curvature of $({\rm FP}_2,\AW_2)$}
\label{sec:AOT.gen}

We start with a general result for the Wasserstein space of stochastic processes that is not restricted to Gaussian processes and could be of general interest.

To formulate it, recall that a metric space $(S,d)$ is geodesic if for every two points $x,y\in S$ there is a curve $\gamma\colon[0,1]\to S$ satisfying $\gamma(0)=x$, $\gamma(1)=y$ and 
\[d(\gamma(u),\gamma(v))=|u-v|d(x,y)\,\qquad u,v\in[0,1].\]
In the present setting, non-negative curvature is most conveniently expressed via an equivalent formulation of the triangle condition, the so-called semi-concavity inequality:
for all $x, y, z \in S$ and every constant speed geodesic  $(\gamma(u))_{u\in[0,1]}$ connecting $x$ to $y$, we have
\begin{align}
    \label{eq:semi.concavity}
d^2(\gamma(u), z) 
\geq (1 - u)d^2(x, z) + u d^2(y, z) - u(1 - u) d^2(x, y),
\end{align}
see, e.g., \cite[Proposition 12.3.3]{AmGiSa08}.
Thus, in this article, an Alexandrov space with non-negative curvature is a locally complete  geodesic space satisfying \eqref{eq:semi.concavity}.

\begin{remark}
Several closely related notions of Alexandrov spaces appear in the literature.
Our definition leans on \cite{BuGrPe92}, while other authors additionally impose metric completeness and local compactness.
In contrast, more recent references (e.g., \cite{BuBuIv01, AlKaPe24}) adopt a much less restrictive approach.
What all authors agree on is that Alexandrov spaces are metric spaces in which a curvature bound formulated via comparison properties of triangles is satisfied (whenever such triangles exist).
Consequently, restricting to locally complete geodesic spaces -- as we do here -- ensures the existence of such triangles for any triple of points in a neighborhood.
It is probably due to this terminological ambiguity that a geodesic space in which \eqref{eq:semi.concavity} holds is called a $PC$-space in \cite{AmGiSa08}; however, 
we emphasize that in our setting this notion coincides with that of an Alexandrov space with non-negative curvature.
\end{remark}

\begin{theorem} \label{thm:AW.Alex}
    $({\rm FP}_2,\mathcal{AW}_2)$ is an Alexandrov space with non-negative curvature.
\end{theorem}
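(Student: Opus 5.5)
The plan is to work in the Lagrangian representation $L^2_{\rm ad}((\R^d)^T)$ on the fixed standard filtered space $(\Omega,\calF,\mathbb P,(\calF_t)_{t=1}^T)$. By the results recalled above (from \cite{beiglbock2025brenier}), the map $\iota_{\mathcal{FP}}$ realizes $({\rm FP}_2,\AW_2)$ isometrically as the quotient of $L^2_{\rm ad}$ under $\sim_{\rm ad}$. The guiding picture is the classical fact that a quotient of a Hilbert space by a group of isometries has non-negative curvature. Here the ``group'' is replaced by the relation $\sim_{\rm ad}$, and optimality of couplings replaces optimal group elements.

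\textbf{Completeness and geodesics.} Completeness, hence local completeness, follows because ${\rm FP}_2$ is Polish (\cite{BaBePa21}). For geodesics, take $\fp X,\fp Y$ and an optimal bicausal coupling $\pi$; the infimum in \eqref{eq:classical.AW} is attained. Realize the coupled pair $(X',Y')$ in $L^2_{\rm ad}((\R^d\times\R^d)^T)$, so that $\E|X'-Y'|^2=\AW_2^2(\fp X,\fp Y)$, and set $Z_u:=(1-u)X'+uY'$. Then $Z_u$ is adapted and, for $u,v\in[0,1]$,
\[\AW_2(Z_u,Z_v)\le \big(\E|Z_u-Z_v|^2\big)^{1/2}=|u-v|\,\AW_2(X',Y').\]
The triangle inequality forces equality, so $u\mapsto Z_u$ is a constant speed geodesic.

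\textbf{Semi-concavity.} The key point is that \eqref{eq:semi.concavity} must hold for \emph{every} constant speed geodesic $\gamma$, not only the displacement interpolations above. I would proceed in three steps.

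\begin{enumerate}[(i)]
\item Fix $u\in(0,1)$, a geodesic $\gamma$ from $x=[X]$ to $y=[Y]$, and a third point $z$. Pick an optimal coupling between $\gamma(u)$ and $x$ and one between $\gamma(u)$ and $y$. Glue them along $\gamma(u)$ into a multi-coupling of $(x,\gamma(u),y)$, and glue in an optimal coupling of $\gamma(u)$ with $z$ as well. Everything is then realized on the fixed space, say as $(X',W,Y',Z')$ with $W\sim_{\rm ad}\gamma(u)$.
\item Check that the glued coupling restricted to $(X',Y')$ is bicausal, so that $\E|X'-Y'|^2\ge \AW_2^2(x,y)$, and likewise that $(X',Z')$ and $(Y',Z')$ are bicausal. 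Gluing bicausal couplings along a common marginal yields bicausal couplings; this is the standard composition property of adapted transport. In the Lagrangian picture it amounts to the existence of a common adapted realization, which I would take from the representation result.
\item Since $\gamma$ is a geodesic, $d(x,\gamma(u))=u\,d(x,y)$ and $d(\gamma(u),y)=(1-u)d(x,y)$. The identity in the Hilbert space $L^2$,
\[\E|W-Z'|^2=(1-u)\E|X'-Z'|^2+u\E|Y'-Z'|^2-u(1-u)\E|X'-Y'|^2+\tfrac{1}{?}\text{(defect)},\]
is replaced in practice by the following argument. First, $\E|X'-Y'|\le \E|X'-W|+\E|W-Y'|$ in $L^2$-norm equals $d(x,y)$ and is at least $d(x,y)$. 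Equality in Minkowski then forces $W=(1-u)X'+uY'$ a.s. With this, the Hilbert identity
\[|W-Z'|^2=(1-u)|X'-Z'|^2+u|Y'-Z'|^2-u(1-u)|X'-Y'|^2\]
holds pointwise. Taking expectations and using $\E|W-Z'|^2=d^2(\gamma(u),z)$, $\E|X'-Z'|^2\ge d^2(x,z)$, $\E|Y'-Z'|^2\ge d^2(y,z)$ and $\E|X'-Y'|^2=d^2(x,y)$ gives \eqref{eq:semi.concavity}.
\end{enumerate}

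\textbf{Main obstacle.} The hard step is the gluing in (i)--(ii). One needs a single bicausal four-fold coupling, with the coupling to $z$ attached to the middle point $W$, in which all the induced pairwise couplings are bicausal with respect to the relevant filtrations. The natural filtration must be the product one $\calF^{\fp X}_t\otimes\calF^{\fp W}_t\otimes\cdots$. Bicausality is not transitive for arbitrary couplings, so I would first try the conditional-independence gluing: take the product of the conditional kernels given $W$'s space. I would then verify the causality conditions time-step by time-step using the tower property. The fallback is to cite the known result that the composition of bicausal couplings is bicausal, as used in proving the triangle inequality for $\mathbb{AW}_2$ in \cite{BaBePa21}.
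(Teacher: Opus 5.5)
Your proposal is correct, but for the semi-concavity inequality it takes a genuinely different route from the paper.

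\textbf{The paper's route.} The paper first invokes \Cref{lem:AW-geodesics}. Its nontrivial direction is imported from \cite[Theorem~4.1]{acciaio2025absolutely} together with the representation result of \cite{beiglbock2025brenier}: every constant speed geodesic satisfies $\gamma(u)\sim_{\rm ad}(1-u)X'+uY'$ for \emph{one} pair of $\AW_2$-optimal representatives, simultaneously for all $u$. The paper then fixes $X^u=(1-u)X'+uY'$ on $\Omega$ and writes $\AW_2^2(X^u,Z)=\inf_{Z'\sim_{\rm ad}Z}\E[|X^u-Z'|^2]$. It expands the pointwise Hilbert identity and bounds the infimum of the sum from below by the sum of the infima.

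\textbf{Your route.} You instead transplant the classical gluing proof for $\W_2$ (cf.\ \cite[Chapter~7]{AmGiSa08}). At a fixed $u$ you glue three optimal bicausal couplings star-shaped around $\gamma(u)$ and realize the result on the fixed space. The equality case of Minkowski's inequality in $L^2$ then gives $W=(1-u)X'+uY'$. This uses only $d(x,\gamma(u))=u\,d(x,y)$ and $d(\gamma(u),y)=(1-u)\,d(x,y)$. So you obtain a pointwise-in-$u$ version of the geodesic characterization as a byproduct instead of citing it. Because the coupling with $z$ is built into the gluing, you also never need to keep $X^u$ fixed on $\Omega$ and optimize over $Z'$ alone. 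That step implicitly uses that couplings can be realized with a prescribed first coordinate.

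\textbf{Trade-off.} You must prove a gluing lemma for bicausal couplings of general filtered processes. The paper's proof is shorter given the cited result, and that result is a stronger structural statement (a single optimal pair for all $u$) than what your argument extracts.

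\textbf{The gluing step.} Your plan goes through, and it is cleanest phrased as immersion of filtrations.
\begin{itemize}
\item Let the glued measure make the $X$-, $Y$- and $Z$-components conditionally independent given $\calF^W_T$ (with the usual convention $\calF^{\fp W}=\calF^{\fp W}_T$).
\item Take $f$ bounded and $\calF^X_T$-measurable, and $k,m,g,h$ bounded and measurable with respect to $\calF^X_t,\calF^W_t,\calF^Y_t,\calF^Z_t$, respectively.
\item Since $\calF^W_T$ is independent of $\calF^Y_t$ (resp.\ $\calF^Z_t$) given $\calF^W_t$, you may replace $g$ and $h$ by their conditional expectations given $\calF^W_T$, which are $\calF^W_t$-measurable.
\item Since $\calF^X_T$ is independent of $\calF^W_t$ given $\calF^X_t$, this yields $\E[fkmgh]=\E[\E[f\mid\calF^X_t]\,kmgh]$.
\end{itemize}
With the analogous computations for $W$, $Y$ and $Z$, every coordinate filtration is immersed in the joint product filtration. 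Hence each coordinate of your representative $(X',W,Y',Z')$ is $\sim_{\rm ad}$-equivalent to the corresponding original process. The bounds $\E[|X'-Y'|^2]\ge\AW_2^2(x,y)$ etc.\ then follow directly from the definition of $\AW_2$ on $L^2_{\rm ad}$.

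\textbf{Presentation.} Drop the garbled display containing the ``defect'' term. Write the Minkowski step with $L^2$-norms, $\|X'-Y'\|_{L^2}\le\|X'-W\|_{L^2}+\|W-Y'\|_{L^2}$.
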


The proof of Theorem \ref{thm:AW.Alex} requires the characterization of geodesic curves stated below.
To that end, for given $X,Y\in{\rm FP}_2$ we say that $X',Y' \in L^2_{\rm ad}((\R^d)^T)$  are $\mathcal{AW}_2$-optimal representatives of $X,Y$ if  $X'\sim_{\rm ad}X$ and $Y'\sim_{\rm ad} Y$ and $\AW_2^2(X,Y)=\E[|X'-Y'|^2]$.
As explained previously, $\mathcal{AW}_2$-optimal representatives correspond to optimal bicausal couplings. In particular, they always exist.

\begin{lemma}\label{lem:AW-geodesics}
    Let $X,Y\in {\rm FP}_2$ and let $(\gamma(u))_{u\in[0,1]}$ be any curve in ${\rm FP}_2$.
    Then the following are equivalent:

    \begin{enumerate}[(i)]
        \item $(\gamma(u))_{u\in[0,1]}$ is a constant speed geodesic from $X$ to $Y$;
        \item There exist $\mathcal{AW}_2$-optimal representatives $X',Y' \in L^2_{\rm ad}((\R^d)^T)$ of $X$ and $Y$ respectively, for which
        \[  \gamma(u) \sim_{\rm ad} (1-u)X' + uY', \qquad u \in [0,1].\]
    \end{enumerate}
\end{lemma}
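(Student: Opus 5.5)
The direction (ii)$\Rightarrow$(i) is a direct computation, and (i)$\Rightarrow$(ii) is obtained by gluing optimal couplings in the Lagrangian representation $L^2_{\rm ad}$.

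\emph{(ii)$\Rightarrow$(i).} Let $X',Y'$ be $\mathcal{AW}_2$-optimal representatives and set $Z_u:=(1-u)X'+uY'$. Each $Z_u$ is adapted and square integrable, so it lies in $L^2_{\rm ad}((\R^d)^T)$. For $u,v\in[0,1]$, taking $Z_u,Z_v$ themselves as candidates in the infimum defining $\AW_2$ gives
\[
\AW_2(Z_u,Z_v)\le \E[|Z_u-Z_v|^2]^{1/2}=|u-v|\,\E[|X'-Y'|^2]^{1/2}=|u-v|\,\AW_2(X,Y).
\]
Now suppose some pair $u<v$ satisfied this with strict inequality. The triangle inequality along $0\le u<v\le 1$ would then give $\AW_2(X,Y)<\AW_2(X,Y)$, which is impossible. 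Hence equality holds for all pairs, and $\gamma$ is a constant speed geodesic.

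\emph{(i)$\Rightarrow$(ii).} Fix $u\in(0,1)$ and a geodesic $\gamma$, and set $Z:=\gamma(u)$.
\begin{enumerate}[1.]
\item \emph{Optimal couplings to the midpoint.} Choose an optimal bicausal coupling between $X$ and $Z$, and one between $Z$ and $Y$.
\item \emph{Gluing along $Z$.} Combine the two couplings into a single triple $(X'',Z'',Y'')$ on the fixed space with $(\mathcal F_t)$ such that $(X'',Z'')$ and $(Z'',Y'')$ realize the two couplings and all three processes are adapted to a common filtration. This is the analogue of the gluing lemma for bicausal couplings; I would cite the gluing result of \cite{BaBePa21}, or construct it by conditional independence given the filtered process of $Z$. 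The point needing care is that the composition remains bicausal between $X''$ and $Y''$, and that the representation in $L^2_{\rm ad}$ from \cite{beiglbock2025brenier} applies to the triple (three processes rather than two).
\item \emph{Rigidity from equality.} Write $D:=\AW_2(X,Y)$. We have
\[
D\le \|X''-Y''\|_{L^2}\le \|X''-Z''\|+\|Z''-Y''\|=uD+(1-u)D=D.
\]
So equality holds throughout. Equality in the $L^2$ triangle inequality forces $Z''-X''$ and $Y''-Z''$ to be nonnegatively proportional, which gives $Z''=(1-u)X''+uY''$. The first equality also shows that $X'',Y''$ are optimal representatives.
\end{enumerate}

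\emph{Main obstacle.} The step above yields, for each $u$, a pair of optimal representatives that depends on $u$, whereas (ii) requires a single pair $X',Y'$ working for all $u$ simultaneously.

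To handle this, I would first apply the gluing at $u=1/2$ to get a fixed pair. Then I would show that any geodesic segment $\gamma|_{[0,1/2]}$ is determined by its endpoints through the same representatives, by iterating the gluing over dyadic times and passing to a limit using weak compactness of bicausal couplings; alternatively I would show uniqueness of intermediate points given the optimal coupling.

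A cleaner route, which I would try first, is as follows. Fix $u$, obtain $(X'',Y'')$ optimal with $\gamma(u)\sim_{\rm ad}(1-u)X''+uY''$. Then check that for another $v$ the point $\gamma(v)$ also equals $(1-v)X''+vY''$ in adapted law. For this, apply the rigidity argument to the triple $(X'',\gamma(v),\gamma(u))$ glued along $\gamma(u)$, using that $\gamma(v)$ lies on the geodesic between $X$ and $\gamma(u)$. Gluing along $\gamma(u)$ keeps the $(X'',Y'')$ coupling fixed, so consistency across all $v$ follows. Dyadic rationals together with continuity of $\gamma$ then finish the argument.
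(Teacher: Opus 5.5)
\textbf{Where the proposal stands.} Your (ii)$\Rightarrow$(i) is exactly the paper's argument. Your single-time step is also sound, modulo the standard gluing of bicausal couplings: glue optimal couplings $X$--$\gamma(u)$ and $\gamma(u)$--$Y$ along $\gamma(u)$, then use equality in Minkowski's inequality in $L^2$.

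\textbf{The gap: from one time $u$ to all times.} In your ``cleaner route'' you glue an optimal coupling $(W,\tilde Z)$ of $\gamma(v)$ and $\gamma(u)$ onto $(X'',Z'',Y'')$ along $Z''$. This keeps $(X'',Y'')$ fixed, but it yields no rigidity. You know $\|W-Z''\|_{L^2}=(u-v)D$ and $\|X''-Z''\|_{L^2}=uD$. The only information on $\|X''-W\|_{L^2}$ is the lower bound $\|X''-W\|_{L^2}\ge \AW_2(X,\gamma(v))=vD$ from the definition of $\AW_2$. The triangle inequality $uD\le \|X''-W\|_{L^2}+(u-v)D$ then holds automatically and forces nothing. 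Rigidity needs the reverse bound $\|X''-W\|_{L^2}\le vD$. You only get that by gluing along $\gamma(v)$, and that produces a new $X$--$\gamma(u)$ coupling, destroying the fixed pair. The claim that a pair chosen for one $u$ works for every $v$ is essentially a non-branching statement: the geodesic from $X$ to the interior point $\gamma(u)$ would have to be unique. You neither prove nor cite this for $(\mathrm{FP}_2,\AW_2)$. Your first alternative (``$\gamma|_{[0,1/2]}$ is determined by its endpoints through the same representatives'') rests on the same unproven claim.

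\textbf{How to repair it.} The compactness idea you mention does work, provided you let the pair change with the level.
\begin{enumerate}
\item For each $n$, chain-glue optimal couplings of $\gamma(k2^{-n})$ and $\gamma((k+1)2^{-n})$, $k=0,\dots,2^n-1$, along the successive intermediate points.
\item Then $D\le\|Z_{2^n}-Z_0\|_{L^2}\le\sum_k\|Z_{k+1}-Z_k\|_{L^2}=D$. Equality in the triangle inequality for $2^n$ vectors in $L^2$ forces $Z_k=(1-k2^{-n})Z_0+k2^{-n}Z_{2^n}$. This gives one optimal pair $(X^n,Y^n)$ representing all level-$n$ dyadic points, hence all coarser ones too.
\item View the $(X^n,Y^n)$ as $\R^{2d}$-valued filtered processes with fixed marginals. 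Use the Prokhorov-type relative compactness in $\mathrm{FP}_2$ from \cite{BaBePa21} (tightness plus uniform integrability) to extract an $\AW_2$-limit.
\item Transfer the interpolation identities, the marginals and optimality to the limit by Lipschitz continuity of $(X',Y')\mapsto(1-u)X'+uY'$, of the projections, and of the cost.
\item Continuity of $\gamma$ then handles non-dyadic $u$.
\end{enumerate}

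\textbf{Comparison with the paper.} The paper does none of this. It imports the existence of a single coupling whose interpolations reproduce the whole geodesic from \cite[Theorem~4.1]{acciaio2025absolutely}, and uses \cite{beiglbock2025brenier} only to represent that coupling in $L^2_{\rm ad}$. With the repair above, your route would give a self-contained proof of that cited result. As written, the step producing a single pair for all $u$ fails.
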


\begin{proof}
    Let $(\gamma(u))_{u\in[0,1]}$ be an arbitrary constant speed geodesic in $({\rm FP}_2,\AW_2)$ from $X$ to $Y$.
    It was shown in \cite[Theorem 4.1]{acciaio2025absolutely} that there exist processes $Y^0,Y^1$ on some abstract filtered probability space such that 
    \[Y^u\coloneqq(1-u) Y^0 + u Y^1 \sim_{\rm ad} \gamma(u)\]
    for all $u \in [0,1]$.
    Moreover, by \cite[Proposition~6.2]{beiglbock2025brenier}, any stochastic process can be represented in $L^2_{\rm ad}((\R^d)^T)$; in particular we can find $(X^0,X^1) \in L^2_{\rm ad}( (\R^d \times \R^d)^T )$ with 
    \[(X^0,X^1) \sim_{\rm ad} (Y^0,Y^1).\]
    Hence, $X^u := (1-u) X^0 + u X^1 \sim_{\rm ad} Y^u$ for all $u \in [0,1]$, which yields the claim.

    Conversely, let $X',Y' \in L^2_{\rm ad}((\R^d)^T)$ be $\mathcal{AW}_2$-optimal representatives of $X,Y$ and set $\gamma(u)  = [(1-u)X' + uY']$ for $u\in[0,1]$.
    Then, for every $u,v\in[0,1]$,
    \begin{align}
    \label{eq:AW.geo.help}
    \AW_2(\gamma(u),\gamma(v))
    &\leq \E\left[ |(1-u)X'+uY') - ((1-v)X'+vY')|^2 \right]^{1/2}\\
    \nonumber
    &=|u-v|\E[|X'-Y'|^2]^{1/2}\\
    \nonumber
    &=|u-v|\AW_2(\gamma(0),\gamma(1)).
    \end{align}
    Moreover, by the triangle inequality, 
    \[\AW_2(\gamma(0),\gamma(1))
    \leq \AW_2(\gamma(0),\gamma(u)) + \AW_2(\gamma(u),\gamma(v)) + \AW_2(\gamma(v),\gamma(1)),\]
    from which it follows that the inequality in \eqref{eq:AW.geo.help} cannot be strict, proving the claim.
\end{proof}

\begin{proof}[Proof of Theorem \ref{thm:AW.Alex}]
    It was shown in \cite[Theorem 1.4]{BaBePa21} that $({\rm FP}_2,\mathcal{AW}_2)$ is a complete geodesic space,
    thus it remains to prove the curvature property.
    
    To this end, let $\gamma$ be a constant speed geodesic in $({\rm FP}_2,\AW_2)$ and let $X,Y \in L^2_{\rm ad}((\R^d)^T)$ be as in \Cref{lem:AW-geodesics} (ii).
    Recall that then $\AW_2^2(X,Y) = \E[|X-Y|^2]$.
    Now, let $Z \in L^2_{\rm ad}((\R^d)^T)$. Writing $X^u := (1-u)X + uY$, we find
    \begin{align*}
        |X^u - Z|^2 &= |(1-u)(X-Z)+u(Y-Z)|^2
        \\
        &= (1-u)|X-Z|^2 + u|Y-Z|^2 - u(1-u)|X-Y|^2.
    \end{align*}
    Hence, by the definition of  $\AW_2$, 
      \begin{align*}
        \AW_2^2(X^u,Z) &
        = \inf_{Z' \sim_{\rm ad} Z} \E[|X^u - Z'|^2] \\
        &= \inf_{Z' \sim_{\rm ad} Z} \left( (1-u) \E[|X-Z'|^2] + u\E[|Y-Z'|^2] - u(1-u) \AW_2^2(X,Y) \right) 
    \end{align*}
    and clearly 
     \begin{align*}
      & \inf_{Z' \sim_{\rm ad} Z} \left( (1-u) \E[|X-Z'|^2] + u\E[|Y-Z'|^2]\right)  \\
       &\quad\geq  (1-u)\inf_{Z' \sim_{\rm ad} Z} \E[|X-Z'|^2] + u\inf_{Z'' \sim_{\rm ad} Z} \E[|Y-Z''|^2] \\
       &\quad= (1-u) \AW_2^2(X,Z) + u \AW_2^2(Y,Z).
    \end{align*}
    This gives that \eqref{eq:semi.concavity} is satisfied, thus $({\rm FP}_2, \AW_2)$ has non-negative curvature.
\end{proof}

\begin{remark}
    The assertion of Theorem \ref{thm:AW.Alex} extends from $\R^d$-valued processes to processes with values in a geodesic space $(M,d)$ with non-negative curvature. 
    Moreover, the scope of Theorem \ref{thm:AW.Alex}  is likely to extend from the present discrete-time setting  to continuous time, if one considers the strong adapted Wasserstein distance (see \cite{bartl2025wasserstein})  defined using a geodesic metric on the path space.
\end{remark}

\subsection{Curvature and geodesics for Gaussian processes}

To formulate the analogue of Theorem~\ref{thm:AW.Alex} for Gaussian processes, denote by ${\rm GP}\subset {\rm FP}_2$ the set of all Gaussian processes, i.e., $\AW_2$-equivalence classes of processes $X=a+LG$ where $a\in (\R^d)^T$ and $L\in {\bf L}$, defined on a standard probability space $(\Omega,\mathcal{F},\mathbb{P})$ endowed with the filtration $(\mathcal{G}_t)_{t=1}^T$ generated by independent standard Gaussians $(G_t)_{t=1}^T$ in $\R^d$.

\begin{theorem}
\label{thm:GP.curvature}
    $({\rm GP},\mathcal{AW}_2)$  is an  Alexandrov space with non-negative curvature.
\end{theorem}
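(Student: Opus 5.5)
The plan is to deduce the result from \Cref{thm:AW.Alex}. The semi-concavity inequality \eqref{eq:semi.concavity} is a statement about triples of points and geodesics between them, so it is inherited by any subset that is itself geodesic with respect to the restricted metric, provided its geodesics are also geodesics of the ambient space. It therefore suffices to show two things: $({\rm GP},\AW_2)$ is a geodesically convex subset of $({\rm FP}_2,\AW_2)$, in the sense that any two Gaussian processes are joined by a constant speed geodesic lying in ${\rm GP}$; and ${\rm GP}$ is locally complete.

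For the geodesic property, take $X=a+LG$ and $Y=b+MG$. By \eqref{eq:formula.d}, which is justified through \Cref{lem:AW-Gaussian-distance,lem:ABW-distance}, the mean part splits off and the distance satisfies
\[
\AW_2^2(X,Y)=|a-b|^2+\min_{O\in\bO}\|L-MO\|_{\rm F}^2 .
\]
Choose $P\in\bO^\ast(L,M)$. Since $P$ is block diagonal with orthogonal blocks, $PG$ is again a vector of independent standard Gaussians generating the same filtration $(\mathcal G_t)_{t=1}^T$, so $MPG\sim_{\rm ad}MG$. Hence $X'=a+LG$ and $Y'=b+MPG$ are $\AW_2$-optimal representatives, because
\[
\E|X'-Y'|^2=|a-b|^2+\|L-MP\|_{\rm F}^2=\AW_2^2(X,Y).
\]
By the converse direction of \Cref{lem:AW-geodesics}, the curve $\gamma(u)=[(1-u)X'+uY']=[(1-u)a+ub+((1-u)L+uMP)G]$ is a constant speed geodesic in ${\rm FP}_2$. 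Since $(1-u)L+uMP\in\bL$, every point $\gamma(u)$ lies in ${\rm GP}$. So ${\rm GP}$ is geodesic, its geodesics are ambient geodesics, and \eqref{eq:semi.concavity} holds for all such curves by \Cref{thm:AW.Alex}.

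Care is needed on one point: \eqref{eq:semi.concavity} must hold for every constant speed geodesic in ${\rm GP}$, not just the one constructed above. This is automatic. Distances in ${\rm GP}$ are the restricted $\AW_2$ distances, so any constant speed geodesic in ${\rm GP}$ is also one in ${\rm FP}_2$, and \Cref{thm:AW.Alex} applies to it directly.

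For local completeness it is enough to show that ${\rm GP}$ is closed in the complete space ${\rm FP}_2$. I expect this to be the main (mild) obstacle. Let $[a_n+L_nG]$ be Cauchy. By \eqref{eq:formula.d} the quantities $\|L_n\|_{\rm F}=d_{\rm ABW}(L_n,0)$ and $|a_n|$ are bounded. Pass to a subsequence with $a_n\to a$ and $L_n\to L$, using compactness of bounded sets in the finite-dimensional space $\bL$. The formula $\min_O\|L_n-LO\|_{\rm F}\le\|L_n-L\|_{\rm F}$ then gives convergence to $[a+LG]$, and since the sequence is Cauchy, the whole sequence converges. Hence ${\rm GP}$ is complete, in fact proper, which finishes the proof.
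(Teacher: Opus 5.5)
Your proof is correct, but it takes a different route from the paper's. The paper identifies $({\rm GP},\AW_2)$ with $(\bL/_{\bO},d_{\rm ABW})$, with the means tacitly split off as a Euclidean factor, and then invokes \Cref{thm:ABW.alex.intro}. That proof works entirely at the matrix level: geodesics come from \Cref{lem:geodesics.one.side}, and semi-concavity follows from the classification of \emph{all} $d_{\rm ABW}$-geodesics from $[L]$ as $[L+uV]$ with $V\in\mathbf V(L)$ (\Cref{thm:identification-of-geodesics}), combined with convexity of $N\mapsto\sup_{O\in\bO}\langle N,MO\rangle_{\rm F}$. You instead inherit semi-concavity from \Cref{thm:AW.Alex} by restriction. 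Because ${\rm GP}$ carries the restricted metric, every constant-speed geodesic of ${\rm GP}$ is also one of ${\rm FP}_2$. So you only need ${\rm GP}$ to be geodesic (your displacement interpolation is \Cref{thm:AW.curves}~(i)--(ii) with means added) and complete. This shows that the inheritance argument needs no characterization of the geodesics that stay in ${\rm GP}$, contrary to what the remark preceding the theorem might suggest. Your route is shorter, treats the means explicitly, and actually proves completeness, which the paper calls obvious. Its cost is that it is not self-contained. \Cref{thm:AW.Alex} rests on \Cref{lem:AW-geodesics}, and hence on the external result that every $\AW_2$-geodesic in ${\rm FP}_2$ is a displacement interpolation, as well as on the representation results for $L^2_{\rm ad}$. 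The paper's argument, by contrast, needs only the closed-form distance of \Cref{lem:AW-Gaussian-distance}. It also produces the classification of geodesics that the paper reuses for the tangent cone and the exponential and logarithmic maps.
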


This result can be established similarly to \Cref{thm:AW.Alex}, although the proof is more involved.
The reason is that one must first establish a suitable characterization (in the sense of \Cref{lem:AW-geodesics}) of geodesics that remain entirely in the space of Gaussian processes, as this is not automatically true for all geodesics, see \Cref{rem:two.geodesics}.
On the other hand,  \Cref{thm:GP.curvature} follows directly from the Procrustes representation of $\mathcal{AW}_2$ developed in the next section and \Cref{thm:ABW.alex.intro}:

\begin{proof}[Proof of \Cref{thm:GP.curvature}]
    By definition $({\rm GP},\mathcal{AW}_2)$ and $({\bf L}/_{ \bf O},d_{\rm ABW})$ are isometric, hence the proof follows because  $({\bf L}/_{ \bf O},d_{\rm ABW})$ is an Alexandrov space with non-negative curvature by  \Cref{thm:ABW.alex.intro} (we note that the proof of \Cref{thm:ABW.alex.intro} given in Section \ref{sec:alex.background}
    is self contained and only uses \Cref{lem:AW-Gaussian-distance} from this section).
\end{proof}

\begin{remark}
\label{rem:two.geodesics}
    As it happens,  ${\rm GP}$ is not a geodesically convex subset of ${\rm FP}_2$.
    Indeed, while there always exists at least one  ${\rm FP}_2$-geodesic between Gaussian processes that stays within ${\rm GP}$, there are situations in which there are multiple geodesics, some of which leave ${\rm GP}$.
    To see that, set 
\[
L=
\begin{pmatrix}
1 & 0 \\
1 & 1
\end{pmatrix},
\qquad
M=
\begin{pmatrix}
1 & 0 \\
-1 & 1
\end{pmatrix}
\]
and put $X=LG$ and $Y=MG$. 
It turns out that $(Z^\pm(u))_{u\in[0,1]}$ given by $Z^\pm(u) = L^{\pm}(u) G$, where
\[
L^+(u) =
\begin{pmatrix}
1 & 0 \\
1-2u & 1
\end{pmatrix},
\,\quad
L^-(u) =
\begin{pmatrix}
1 -2u & 0 \\
1 & 1
\end{pmatrix},
\]
are both constant speed geodesics between $L$ and $M$.

Moreover, let $\xi$ be a fair coin-flip that is independent of $(G_t)_{t=1}^T$ and set 
\[ Z(u):=\begin{cases}
    Z^+(u), &\xi = \text{head}\\
    Z^-(u), & \xi = \text{tail}.
\end{cases}
\]
Then $(Z(u))_{u\in[0,1]}$ can be checked to be a geodesic, but for every $t=1,2$ and $u\in(0,1)$, $Z_t(u)$ is not Gaussian (it is a mixture of Gaussians).
\end{remark}

\begin{remark}
\label{rem:full.rank.not.geo}
    A minor modification of the processes considered in the previous example gives full rank matrices $L$ and $M$ for which there exists a unique ${\rm FP}_2$-geodesic, and that geodesic has a degenerate matrix at $u=\frac{1}{2}$.
    In particular, the space of all Gaussian processes having full-rank covariance matrices is not a geodesically convex subset of ${\rm FP}_2$; nor of ${\rm GP}$ for that matter (in contrast to the $\W_2$-setting where full rank Gaussians are geodesically convex). 
    Refer also to Example 4.2 in \cite{acciaio2025entropic}.
\end{remark}

Finally, we require the following (immediate) extension of the closed form formula for the $\mathcal{AW}_2$-distance between full rank Gaussians established in  \cite{acciaio2025entropic} to the present setting:

\begin{lemma} \label{lem:AW-Gaussian-distance}
    For every $L,M\in{\bf L}$ and $a,b\in(\R^d)^T$, setting $X:=a+LG$ and $Y:=b+MG$, we have
     \begin{align*}\mathcal{AW}_2^2(X,Y) 
     &=\|a-b\|_2^2 + \|L\|_{\rm F}^2 + \|M\|_{\rm F}^2 - 2{\rm tr}(S),
     \end{align*}
     where $S={\rm diag}(S_1,\dots,S_T)$ and each $S_t$ is the diagonal matrix containing the singular values of $(M^\top L)_{t,t}$ for $t = 1, \dots, T$.
\end{lemma}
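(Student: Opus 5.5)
The plan is to reduce to the centered case and then compute the optimal bicausal transport between $LG$ and $MG$ through a backward dynamic programming argument, using only the filtration generated by $G$.

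\textbf{Step 1: reduction to the centered case.} For any $Z'\sim_{\rm ad}(X-a)$ and $Z''\sim_{\rm ad}(Y-b)$ we have $\E[Z']=\E[Z'']=0$, so
\[\E[|a+Z'-b-Z''|^2]=\|a-b\|_2^2+\E[|Z'-Z''|^2].\]
Shifting by a deterministic vector commutes with $\sim_{\rm ad}$. Hence it suffices to treat $a=b=0$.

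\textbf{Step 2: upper bound.} For each $t$, write the polar decomposition $(M^\top L)_{t,t}=U_t S_t V_t^\top$ and set $O_t:=U_tV_t^\top$, which is orthogonal. Put $O=\diag(O_1,\dots,O_T)\in\bO$. Then $G':=OG$ is again a sequence of independent standard Gaussians, and it generates the same filtration $(\mathcal G_t)$. Therefore $MG'\sim_{\rm ad}MG$. This gives
\[\AW_2^2(X,Y)\le \E[|LG-MOG|^2]=\|L-MO\|_{\rm F}^2=\|L\|_{\rm F}^2+\|M\|_{\rm F}^2-2\,\tr(M^\top L\,O^\top)_{\rm diag}.\]
Only the diagonal blocks of $M^\top L$ contribute, because $O$ is block-diagonal. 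Moreover $\tr((M^\top L)_{t,t}O_t^\top)=\tr(U_tS_tV_t^\top V_tU_t^\top)=\tr S_t$, so the upper bound equals the claimed formula.

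\textbf{Step 3: lower bound.} The cross term to control is $\E[\langle X',Y'\rangle]$ over adapted representatives. I would use the bicausal-coupling formulation \eqref{eq:classical.AW}. Under a bicausal coupling $\pi$ of $(G,(\mathcal G_t))$ with itself (copies $G,\tilde G$), the following properties hold:
\begin{itemize}
\item $G_s$ and $\tilde G_s$ are each standard Gaussian.
\item Bicausality forces $\E_\pi[G_s\tilde G_r^\top]=0$ for $s\ne r$. For instance, if $s<r$, then $\tilde G_r$ is independent of $\mathcal F^{\tilde G}_{r-1}$, and by causality it is conditionally independent of $G_s$ given $\tilde G_{1:r-1}$, so it is independent of $(G_s,\tilde G_{1:r-1})$.
\end{itemize}
Thus $C:=\E_\pi[\tilde G G^\top]=\diag(C_1,\dots,C_T)$, where each $C_t$ is a cross-covariance of two standard Gaussians. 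Since
\[\begin{pmatrix}\mathrm{Id}&C_t^\top\\ C_t&\mathrm{Id}\end{pmatrix}\succeq0,\]
each $C_t$ is a contraction, $\|C_t\|_{\rm op}\le1$. Then
\[\E_\pi[\langle LG,M\tilde G\rangle]=\tr(M^\top L\,C^\top)=\sum_t\tr\big((M^\top L)_{t,t}C_t^\top\big)\le\sum_t\tr S_t,\]
by von Neumann's trace inequality. Two caveats remain here. First, processes $\sim_{\rm ad}$ to $LG$ are realized on filtrations possibly larger than that of $G$. I would handle this by noting that $\AW_2$ is computed between the filtered processes $(\Omega,(\mathcal G_t),LG)$ and $(\Omega,(\mathcal G_t),MG)$, so bicausal couplings of the underlying spaces suffice. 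Second, the orthogonality $\E[G_s\tilde G_r^\top]=0$ for $s\neq r$ must be checked carefully from the conditional independence definitions.

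\textbf{Main obstacle.} The technical heart is exactly this: identifying which filtered process $LG$ defines when its natural filtration is strictly smaller than $\mathcal G$, and verifying that bicausality kills the off-diagonal cross-covariances. The rest is linear algebra.
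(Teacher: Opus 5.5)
Your argument is correct, but it takes a genuinely different route from the paper.

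\textbf{The paper's route.} The paper computes nothing directly. It quotes the closed-form formula for full-rank $L,M$ from the cited work on entropic adapted transport; for full-rank $L$ the natural filtration of $LG$ coincides with $(\mathcal G_t)$. It then extends the formula to degenerate $L,M$ by approximation with full-rank $L^n\to L$, $M^n\to M$. Two facts make this work: $L^2$-convergence on the common filtered space implies $\mathcal{AW}_2$-convergence, and the singular values depend continuously on $(M^\top L)_{t,t}$.

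\textbf{Your route.} You give a self-contained proof valid for all $L,M$ at once. The upper bound comes from the filtration-preserving rotation $G\mapsto OG$ with $O_t=U_tV_t^\top$. The lower bound comes from the observation that every bicausal self-coupling of the Gaussian white noise has block-diagonal cross-covariance $C=\mathrm{diag}(C_1,\dots,C_T)$ with contractive blocks. Von Neumann's trace inequality then closes the gap.

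\textbf{Your two caveats are already settled.}
\begin{itemize}
\item The conditional-independence step you sketched is exactly what is needed. Causality gives $\tilde G_r$ independent of $G_{1:r-1}$ given $\tilde G_{1:r-1}$. Combined with the independence of $\tilde G_r$ from $\tilde G_{1:r-1}$, this makes $\tilde G_r$ independent of $(G_{1:r-1},\tilde G_{1:r-1})$, hence $\mathbb E_\pi[G_s\tilde G_r^\top]=0$ for $s<r$. The case $s>r$ is symmetric.
\item By the paper's convention, $LG$ always carries the filtration $(\mathcal G_t)$, whatever its natural filtration is. The isometry between the Lagrangian and bicausal-coupling formulations stated in Section 2 therefore reduces the lower bound to bicausal couplings of $(\Omega,(\mathcal G_t))$ with itself, which is exactly what you analyze.
\end{itemize}

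\textbf{What each route buys.} Yours avoids both the external full-rank result and the approximation step. Along the way it yields the Procrustes representation $d_{\rm ABW}(L,M)=\min_{O\in\mathbf O}\|L-MO\|_{\rm F}$ and an explicit optimizer. It also shows that relaxing from block-orthogonal $O$ to arbitrary block-diagonal contractions $C$ does not change the value. The paper's route is much shorter, but it relies entirely on the cited full-rank computation.
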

\begin{proof}
    For full-rank $L,M\in{\bf L}$ the statement is proven in \cite{acciaio2025entropic}.
    The general case follows from an approximation: let $L^n,M^n\in{\bf L}$ be full-rank matrices  such that $L^n\to L$ and $M^n\to M$ as $n\to\infty$, and set
    \[X^n:=a+L^nG \quad\text{and}\quad Y^n:=b+M^nG.\]
    Since the sequences converge in $L^2$, $\mathcal{AW}_2(X^n,X)\to 0$ and  $\mathcal{AW}_2(Y^n,Y)\to 0$ as $n\to\infty$.
Therefore,
\begin{align*}
     \mathcal{AW}_2^2(X,Y)
&=\lim_{n\to\infty} \mathcal{AW}_2^2(X^n,Y^n) \\
&=\lim_{n\to\infty} \left( \|a-b\|_2^2 + \|L^n\|_{\rm F}^2 + \|M^n\|_{\rm F}^2 - 2{\rm tr}(S^n) \right),
\end{align*}
where $S^n={\rm diag}(S_1^n,\dots,S_T^n)$ and each $S_t^n$ is the diagonal matrix containing the singular values of $((M^n)^\top L^n)_{t,t}$.
Since $(M^n)^\top L^n\to M^\top L$ as $n\to\infty$, $S^n\to S$ and the claim follows.
\end{proof}

We describe $\mathcal{AW}_2$--optimal representatives for Gaussian processes and 
moreover characterize geodesics in ${\rm GP}$.

\begin{theorem}
\label{thm:AW.curves}
Let $X,Y \in {\rm GP}$ be represented by $L,M\in {\bf L}$, respectively.
\begin{enumerate}[(i)]
\item   If $P\in {\bf O}^\ast(L,M)$, then
    \[ (X',Y') := (LG,MPG) \]
    are $\mathcal{AW}_2$-optimal representatives of $X,Y$.
    \item If $P\in {\bf O}^\ast(L,M)$, then 
    \begin{align} 
    \label{eq:AW.GP.CSG}
    \gamma(u):= [((1-u)L + u MP)G], \quad u\in[0,1]
    \end{align}
    defines a constant-speed geodesic in ${\rm GP}$ with $\gamma(0) \sim_{\rm ad} X$ and $\gamma(1) \sim_{\rm ad} Y$.
    \item If $(\gamma(u))_{u\in[0,1]}$ is a constant-speed geodesic in ${\rm GP}$ with $\gamma(0) \sim_{\rm ad} X$ and $\gamma(1) \sim_{\rm ad} Y$, then there exists $P\in {\bf O}^\ast(L,M)$ such that \eqref{eq:AW.GP.CSG} holds.
\end{enumerate}
\end{theorem}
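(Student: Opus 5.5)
Parts (i) and (ii) come directly from \Cref{lem:AW-Gaussian-distance} and \Cref{lem:AW-geodesics}; part (iii) needs a real argument.

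\textbf{Part (i).} Let $P\in{\bf O}^\ast(L,M)$. Each diagonal block $P_{t,t}$ is orthogonal, so $PG$ is again a vector of independent standard Gaussians. It generates the same filtration $(\mathcal G_t)_{t=1}^T$, because $P_{t,t}$ is invertible block by block. The measure-preserving, filtration-preserving map $G\mapsto PG$ therefore gives $MPG\sim_{\rm ad}MG=Y$. Then
\[
\E[|LG-MPG|^2]=\|L-MP\|_{\rm F}^2=d_{\rm ABW}^2(L,M)=\AW_2^2(X,Y),
\]
where the middle equality is the definition of $P$ as a minimiser in \eqref{eq:formula.d}, and the last equality is \Cref{lem:AW-Gaussian-distance} together with the closed form in \eqref{eq:formula.d}. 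Hence $(LG,MPG)$ are $\AW_2$-optimal representatives.

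\textbf{Part (ii).} Apply the implication (ii)$\Rightarrow$(i) of \Cref{lem:AW-geodesics} to the optimal representatives from part (i). This shows that $\gamma$ is a constant-speed geodesic in ${\rm FP}_2$. Each $\gamma(u)$ is represented by $((1-u)L+uMP)G$ with $(1-u)L+uMP\in{\bf L}$, so $\gamma$ stays in ${\rm GP}$.

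\textbf{Part (iii): setting up.} Let $\gamma$ be a geodesic in ${\rm GP}$. I would write $\gamma(u)=[L_uG]$ with $L_0=L$ and $L_1=M$, and set $\delta:=d_{\rm ABW}(L,M)$. The aim is to show the curve is \emph{affine modulo} ${\bf O}$. I would use the midpoint $N:=L_{1/2}$ and choose $O_1\in{\bf O}^\ast(N,L)$ and $O_2\in{\bf O}^\ast(N,M)$. Then
\[
\|N-LO_1\|_{\rm F}=\|N-MO_2\|_{\rm F}=\delta/2 .
\]
By the triangle inequality and the definition of $d_{\rm ABW}$,
\[
\delta\le\|LO_1-MO_2\|_{\rm F}\le\|LO_1-N\|_{\rm F}+\|N-MO_2\|_{\rm F}=\delta .
\]
Equality in the Euclidean triangle inequality in $(\bL,\|\cdot\|_{\rm F})$ forces $N=\tfrac12(LO_1+MO_2)$. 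The first inequality is also an equality, so $O_1^{-1}O_2$, or more precisely $O_2O_1^\top$, lies in ${\bf O}^\ast(L,M)$. Set $P:=O_2O_1^\top$. Right-multiplying by $O_1^\top$ gives $[N]=[\tfrac12(L+MP)]$.

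\textbf{Part (iii): all times, and the main obstacle.} Repeat the argument for every $u\in(0,1)$, and more generally for $L_u$ between the pairs $(L_v,L_w)$. This yields $[L_u]=[(1-u)L+uMP_u]$ with $P_u\in{\bf O}^\ast(L,M)$, but a priori $P_u$ depends on $u$. Showing that a single $P$ works for all $u$ is the main obstacle. My plan is to compare two times $u<v$: apply the same collinearity argument to the triple $(L,L_u,L_v)$, or to $(L_u,L_v,M)$, taking representatives aligned with $L$. Equality in Euclidean triangle inequalities should force the aligned representatives of $L_u$ and $L_v$ to lie on one segment from $L$. That means $(1-u)L+uMP_u$ and $(1-v)L+vMP_v$ must be collinear with $L$ after a common ${\bf O}$-alignment, which gives $MP_u=MP_v\,O$ for some $O$ fixing $L$ in the relevant sense. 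I would then use that $MP_u$ is determined up to the stabiliser structure of ${\bf O}^\ast(L,M)$, via the explicit description in \Cref{lem:ABW-optimizers}, to conclude $[(1-u)L+uMP_u]=[(1-u)L+uMP_v]$ for all $u$. Here is where I am least sure: the alignment ambiguity enters whenever ${\bf O}^\ast(L,L)$ is nontrivial. If the direct comparison gets stuck, I would instead work on a dense set of dyadic times with successive midpoint refinements, fix $P$ there, and pass to a limit using compactness of ${\bf O}^\ast(L,M)$ and continuity of $\gamma$.
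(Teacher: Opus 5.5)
Your parts (i) and (ii) are correct and match the paper. So are the first two steps of (iii): the midpoint identity, and $[L_u]=[(1-u)L+uMP_u]$ with $P_u\in\mathbf O^\ast(L,M)$.

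The gap is the step you flagged: nothing in the proposal shows that one $P$ works for all times, and aligning with $L$ does not give this for free. Take $A=(1-u)L+uMP_u$, $B=(1-v)L+vMP_v$ with $u<v$. The triple $(L,L_u,L_v)$ only yields $A=(1-\tfrac uv)L+\tfrac uv\,BO$ for some $O\in\mathbf O^\ast(L,B)$, that is, $(1-u)L+uMP_u=(1-\tfrac uv)L+\tfrac uv(1-v)LO+uMP_vO$. To read off $MP_u=MP_vO$ you need $LO=L$, i.e.\ $\mathbf O^\ast(L,(1-v)L+vMP_v)\subseteq\mathbf O^\ast(L,L)$ for $v<1$. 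This is exactly \Cref{prop:O.equal.O}, which rests on the rank statement \Cref{lem:O.equal.O.1} and on \Cref{lem:O.equal.O.2}. The paper proves (iii) via \Cref{thm:identification-of-geodesics}, where this proposition shows that two lifts of $\gamma(\tfrac12)$ aligned with $L$ differ by an element fixing $L$.

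\Cref{lem:ABW-optimizers} cannot replace it. That lemma only describes $\mathbf O^\ast(L,M)$, and distinct elements of $\mathbf O^\ast(L,M)$ genuinely give distinct geodesics (\Cref{rem:two.geodesics}). So $P$ must be pinned down by $\gamma$ itself at several times. Your dyadic fallback, as described, meets the same consistency problem at every refinement.

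Your collinearity idea can nevertheless be completed without \Cref{prop:O.equal.O}. Instead of aligning each point with $L$, align successively along a chain and close up at the far endpoint. Keep $P=O_2O_1^\top$ and the lift $A=\tfrac12(L+MP)$ of $\gamma(\tfrac12)$ from your midpoint step.

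For $v\in(\tfrac12,1)$, choose a lift $B$ of $\gamma(v)$ with $\|A-B\|_{\rm F}=(v-\tfrac12)\delta$. Then choose $O'\in\mathbf O$ with $\|B-MO'\|_{\rm F}=(1-v)\delta$. Now $\delta\le\|L-MO'\|_{\rm F}\le\|L-A\|_{\rm F}+\|A-B\|_{\rm F}+\|B-MO'\|_{\rm F}=\delta$. So $L,A,B,MO'$ lie in this order on one segment, which gives $MO'=2A-L=MP$ and $B=(1-v)L+vMP$.

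For $v\in(0,\tfrac12)$, exchange the roles of $L$ and $M$. Choose $B$ aligned with $A$, then $O''$ with $\|B-LO''\|_{\rm F}=v\delta$. The segment through $LO'',B,A,MP$ forces $LO''=2A-MP=L$, and again $B=(1-v)L+vMP$.

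This route uses only the equality case of the triangle inequality in $(\mathbf L,\|\cdot\|_{\rm F})$. The paper's route through \Cref{prop:O.equal.O} is longer, but it produces the stabiliser identity that the paper reuses later for angles and tangent cones.
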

\begin{proof}
    Let $P\in {\bf O}^\ast(L,M)$. By \Cref{lem:ABW-distance,} and the definition of ${\bf O}^\ast(L,M)$,
    \[ \mathcal{AW}_2(X,Y) 
    =  \|L-MP\|_{\rm F}
    = \E \left[ |LG - MPG|^2\right]^{1/2},\]
    where the second equality follows from a straightforward computation.
    Since $LG\sim_{\rm ad}X$ and $MPG\sim_{\rm ad}Y$, the first claim follows.
    
    The second claim follows from (i) and \Cref{lem:AW-geodesics}.

    As for the third claim, assume that $(\gamma(u))_{u\in[0,1]}$ is a constant-speed geodesic in ${\rm GP}$.
    Then $(\gamma(u))_{u\in[0,1]}$ can be identified with a constant-speed geodesic $(\tilde\gamma(u))_{u\in[0,1]}$ in ${\bf L}/_{\bf O}$ and \Cref{thm:identification-of-geodesics} implies that there is $P\in {\bf O}^\ast(L,M)$ satisfying $\tilde\gamma(u) = [(1-u)L+uMP]$ for all $u\in[0,1]$.

    Since $\tilde\gamma(u)$ corresponds to $[((1-u)L+uMP)G]$ the claim follows.
\end{proof}

\begin{remark}
    It is crucial to restrict in (iii) to constant-speed geodesic in ${\rm GP}$ rather than ${\rm FP}_2$.
    Indeed, if $|{\bf O}^\ast(L,M)|>1$ one can `randomize' displacement interpolations and construct a constant speed geodesic $(\gamma(u))_{u\in[0,1]}$ in ${\rm FP}_2$ for which $\gamma(u)\notin {\rm GP}$ for all $u\in(0,1)$, see \Cref{rem:two.geodesics}.
\end{remark}

\section{Fine geometric structure}
We now turn to the proofs of the results announced in the introduction, 
beginning with the case of general $L \in {\bf L}$ and specializing later to regular $L \in {\bf L}^{\rm reg}$.

Before we start, let us comment on frequently used notation. 
We denote by $\mathrm{O}(d)$ the set of all $d \times d$-dimensional orthogonal matrices.
For a matrix $A \in \mathbb{R}^{m \times m}$, we write $\sigma_i(A)$ for its $i$-th singular value and 
its singular value decomposition (SVD in what follows) as
$A = USV^\top$; 
here $U,V\in \mathrm O(m)$ and $S={\rm diag}(\sigma_1(A), \dots, \sigma_m(A))$ 
with  the convention that $\sigma_1(A) \geq \ldots \geq \sigma_m(A)$.

For a matrix $A\in \mathbb{R}^{dT \times dT}$ we will also use the block-notation $A=(A_{s,t})_{s,t=1}^T$ with 
$A_{s,t} \in \mathbb{R}^{d\times d}$, $s,t = 1, \dots, T$ and denote the $t$-th block row and $t$-th block column of $A$ by  
\begin{align*}
    A_{t,\cbullet}& = (A_{t,1}\; A_{t,2}\; \cdots\; A_{t,T}) \in \mathbb{R}^{d \times dT} \text{ and}\\
    A_{\cbullet,t} &= (A_{1,t}, A_{2,t}, \dots, A_{T,t})^\top \in \mathbb{R}^{dT \times d}, 
\end{align*}
respectively.
In particular, for $A,B\in \mathbb{R}^{dT \times dT}$,  $(AB)_{t,t}= A_{t,\cbullet}B_{\cbullet,t}$.
Moreover, if $D\in{\bf L}$ is block-diagonal, then $(AD)_{t,t}= A_{t,t} D_t$.

Throughout this article, when we say that a matrix  $A\in\R^{m\times m}$ is positive semi-definite, we implicitly assume that it is symmetric.

\subsection{Basic properties}

We will start with the following simple but crucial observation.
Recall the set
\[
\mathbf O = \left\{\diag(O_1, \dots, O_T): O_t \in \mathrm O(d), t = 1, \dots, T \right\}.
\]
The Procrustes representation of $\mathcal{AW}_2$ obtained in the next lemma plays a central role throughout the paper.
\begin{lemma} \label{lem:ABW-distance}
    For every $L, M \in \mathbf L$,
    \[
    d_{\rm ABW}(L,M)
    = \inf_{O\in {\bf O}} \|L  - MO\|_{\rm F}.\]
Moreover, the infimum is attained.
\end{lemma}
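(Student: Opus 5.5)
Since \Cref{lem:AW-Gaussian-distance} (with $a=b=0$) already gives $d_{\rm ABW}^2(L,M)=\|L\|_{\rm F}^2+\|M\|_{\rm F}^2-2\,{\rm tr}(S)$, I will show that $\inf_{O\in{\bf O}}\|L-MO\|_{\rm F}^2$ equals the same expression and is attained. Attainment is immediate: ${\bf O}$ is compact, being a product of $T$ copies of the compact group ${\rm O}(d)$, and $O\mapsto\|L-MO\|_{\rm F}^2$ is continuous. So everything reduces to computing the infimum.

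First I will expand the square. Right multiplication by an orthogonal matrix preserves the Frobenius norm, and every $O\in{\bf O}$ is orthogonal as a $dT\times dT$ matrix. Hence $\|MO\|_{\rm F}=\|M\|_{\rm F}$, and
\[
\|L-MO\|_{\rm F}^2=\|L\|_{\rm F}^2+\|M\|_{\rm F}^2-2\,{\rm tr}(L^\top MO).
\]
Because $O$ is block-diagonal, the trace splits by the block rule $(AD)_{t,t}=A_{t,t}D_t$ recorded above:
\[
{\rm tr}(L^\top MO)=\sum_{t=1}^T{\rm tr}\big((L^\top M)_{t,t}O_t\big).
\]
The maximisation over $O$ therefore decouples into $T$ independent problems over $O_t\in{\rm O}(d)$.

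Each of these is a classical orthogonal Procrustes problem. For $A\in\R^{d\times d}$ with SVD $A=USV^\top$, the von Neumann trace inequality gives $\max_{O\in{\rm O}(d)}{\rm tr}(AO)={\rm tr}(S)$, attained at $O=VU^\top$. I will apply this with $A=(L^\top M)_{t,t}=((M^\top L)_{t,t})^\top$, which has the same singular values as $(M^\top L)_{t,t}$. Summing over $t$ yields
\[
\min_{O\in{\bf O}}\|L-MO\|_{\rm F}^2=\|L\|_{\rm F}^2+\|M\|_{\rm F}^2-2\,{\rm tr}(S),
\]
which matches \Cref{lem:AW-Gaussian-distance}. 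The only non-routine ingredient is the Procrustes/von Neumann step, and that is standard.

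As a sanity check, the upper bound $d_{\rm ABW}(L,M)\le\|L-MO\|_{\rm F}$ can also be seen directly. The process $MOG$ satisfies $MOG\sim_{\rm ad}MG$, because $O_tG_t$ are again independent standard Gaussians generating the same filtration $(\mathcal G_t)$. Moreover $\E|LG-MOG|^2=\|L-MO\|_{\rm F}^2$. The reverse inequality, however, genuinely relies on \Cref{lem:AW-Gaussian-distance}, so I will not try to prove it independently.
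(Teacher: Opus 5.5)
Your proposal is correct and follows essentially the same route as the paper: expand the square, split $\langle L, MO\rangle_{\rm F}$ over the diagonal blocks, solve each block by the von Neumann trace inequality to obtain ${\rm tr}(S_t)$, match the result with \Cref{lem:AW-Gaussian-distance}, and get attainment from compactness of ${\bf O}$. Your expansion also keeps the factor $2$ in front of the sum of block traces, which the paper's display \eqref{eq:sup.equal.sum.sup} omits.
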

\begin{proof}
    An application of the bilinearity of the Frobenius inner product gives
    \begin{align}
         \nonumber\inf_{O \in \bf O } \| L - MO\|_{\rm F}^2
        &=  \| L \|_{\rm F}^2  + \| M \|_{\rm F}^2   - 2 \sup_{O \in \bf O } \langle L , MO\rangle_{\rm F}\\
        \label{eq:sup.equal.sum.sup}
        &=  \| L \|_{\rm F}^2  + \| M \|_{\rm F}^2- \sum_{t = 1}^T \sup_{O_t \in {\rm O}(d)}\tr\left((M^\top L)_{t,t}O_t^\top \right).
    \end{align}
It follows from the Von Neumann trace inequality that 
\begin{equation} \label{eq:trace-optimizer}
\sup_{O \in \mathrm{O}(d)} \tr\left((M^\top L)_{t,t}O^\top \right) = \tr(S_t),
\end{equation}
where $S_t$ is the diagonal matrix containing the singular values of $(M^\top L)_{t,t}$.
Hence the first statement follows from \Cref{lem:AW-Gaussian-distance} and the definition of $d_{\rm ABW}$.

The second statement follows from the compactness of ${\bf O}$.
\end{proof}

Throughout, we make repeated use of the representation
 \begin{align}
 \label{eq:rep.d.prok}
 \begin{split}
         d_{\rm ABW}^2(L,M)
         &=\inf_{O \in \bf O } \| L - MO\|_{\rm F}^2
        =  \| L \|_{\rm F}^2  + \| M \|_{\rm F}^2   - 2 \sup_{O \in \bf O } \langle L , MO\rangle_{\rm F}
        \end{split}
    \end{align}
for $L,M\in\bf L$, omitting explicit references to \Cref{lem:ABW-distance}.

The next lemma, while elementary, allows for a characterization of the optimizers of \eqref{eq:rep.d.prok}.
To state it, for  $r \in \{0, \dots, d\}$, set
\[
\mathbf Q(r):=  \left \{ \begin{pmatrix}
    {\rm Id} & 0 \\ 0 & Q'
\end{pmatrix} : Q' \in \mathrm O (d -r)\right\} \subseteq \mathrm O (d).
\]

\begin{lemma} 
\label{prp:optimizer-characterization}
    Let $A \in \mathbb R^{d \times d}$ with SVD $A = U S V^\top$
    and let $O \in \mathrm O(d)$. 
    Then the following are equivalent:
    \begin{enumerate}[(i)]
        \item $AO$ is positive semi-definite;
        \item $\tr(AO) = \tr(S)$;
        \item $O = VQU^\top$ for $Q \in \mathbf Q(\operatorname{rk}(A))$. 
    \end{enumerate}
\end{lemma}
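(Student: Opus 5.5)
The plan is to reduce everything to the diagonal case via the substitution $W := V^\top O U$. This matrix is orthogonal, and $AO = U S W U^\top$. Since conjugation by $U$ preserves both positive semi-definiteness and the trace, (i) is equivalent to $SW$ being positive semi-definite. Likewise (ii) is equivalent to $\tr(SW)=\tr(S)$. Condition (iii) reads $W = Q$ for some $Q\in\mathbf Q(r)$, where $r=\operatorname{rk}(A)$, because $O = VWU^\top$. So it suffices to prove the equivalence for $A=S=\diag(\sigma_1,\dots,\sigma_r,0,\dots,0)$ with $\sigma_1\ge\dots\ge\sigma_r>0$, and with $O$ replaced by $W\in\mathrm O(d)$.

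For (iii)$\Rightarrow$(i),(ii): if $W=\diag(\mathrm{Id}_r,Q')$, then $SW=\diag(\sigma_1,\dots,\sigma_r,0,\dots,0)=S$. This is positive semi-definite and has trace $\tr(S)$.

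For (ii)$\Rightarrow$(iii): write $\tr(SW)=\sum_{i\le r}\sigma_i W_{ii}$. Since $W$ is orthogonal, $|W_{ii}|\le 1$, and each $\sigma_i>0$, so equality with $\sum_{i\le r}\sigma_i$ forces $W_{ii}=1$ for all $i\le r$. The $i$-th row and column of $W$ then have unit norm with diagonal entry $1$, so all their off-diagonal entries vanish. Hence $W=\diag(\mathrm{Id}_r,Q')$, and $Q'$ is orthogonal because $W$ is. This is the step needing the most care, but it is elementary.

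For (i)$\Rightarrow$(ii): if $SW$ is symmetric positive semi-definite, then $(SW)^2=(SW)^\top(SW)=W^\top S^2W$. Hence $SW=(W^\top S^2 W)^{1/2}=W^\top S W$ by uniqueness of the PSD square root, and so $\tr(SW)=\tr(W^\top SW)=\tr(S)$. An alternative route is that the eigenvalues of a PSD matrix equal its singular values, and the singular values of $SW$ are those of $S$. This closes the cycle of implications.
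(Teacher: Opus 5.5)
Your proof is correct and follows essentially the same route as the paper. The substitution $W=V^\top OU$ together with the bound $\tr(SW)=\sum_{i\le r}\sigma_iW_{ii}\le\tr(S)$ is exactly the paper's argument for (ii)$\Rightarrow$(iii), and your (iii)$\Rightarrow$(i) matches the paper's step. The differences are minor: you obtain (i)$\Rightarrow$(ii) from uniqueness of the PSD square root, whereas the paper notes that the eigendecomposition of the PSD matrix $AO$ is an SVD and that $AO$ has the same singular values as $A$ (your stated alternative); and you explicitly justify why $W_{ii}=1$ for $i\le r$ forces the block form $\diag(\mathrm{Id}_r,Q')$, which the paper leaves implicit.
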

We provide the proof of \Cref{prp:optimizer-characterization} for completeness.
\begin{proof}
We start by proving that (i) implies (ii). 
Indeed, if $AO$ is positive semi-definite, 
its SVD is of the form $AO = W \tilde S W^\top$ and clearly $\tr(AO) = \tr(\tilde S)$.
In particular, $A = W \tilde S (O W)^\top$ is an SVD of $A$, 
thus it is apparent that $A$ and $AO$ have the same singular values and $\tr(\tilde S)=\tr(S)$.

To show that (ii) implies (iii), 
let $O \in \mathrm O(d)$ such that $\tr(AO) = \tr(S)$.
Consider $Q:= V^\top O U \in \mathrm O(d)$ and note that
\[
\tr(AO) 
= \tr(USV^\top O) 
= \tr(S V^\top OU)
= \tr(SQ)
= \sum_{i = 1}^d \sigma_{i}(A)Q_{ii}
=:(\ast),
\]
where in the last equality we used the fact that $S$ is a diagonal matrix.
Since $Q_{ii} \leq 1$ by  orthogonality,  $(\ast) \leq \tr(S)$ with equality if and only if $Q_{ii} = 1$ 
for all $i = 1, \dots, \operatorname{rk}(A)$.

It remains to show that (iii) implies (i). 
To this end, let $O = VQU^\top$ for $Q \in \mathbf Q(\operatorname{rk}(A))$. 
Then $SQ = QS = S$ and thus
\[AO = USV^\top V Q U^\top = U S U^\top,\]
 which is clearly positive semi-definite. 
\end{proof}

A key ingredient in the (geometric) analysis of $d_{\rm ABW}$ is the
characterization of those elements in $O\in{\bf O}$ that are optimal in \eqref{eq:rep.d.prok}.

\begin{lemma} 
\label{lem:ABW-optimizers}
\label{special-optimizers}
 Let $L,M \in \mathbf L$ and $P\in {\bf O}$.
 Then the following are equivalent: 
    \begin{enumerate}[(i)]
        \item $P \in \mathbf O^*(L,M)$, i.e.\ $\|L-MP\|_{\rm F} = d_{\rm ABW}(L,M)$;
        \item $(M^\top L)_{t,t}P^\top_t$ and $P^\top_t(M^\top L)_{t,t}$ are positive semi-definite for all $t=1,\dots,T$;
        \item It holds that
\begin{equation} \label{eq:O(L,M)}
    P\in 
    \left\{
    \diag(P_1,\dots,P_T) :
    \begin{array}{l}
    P_t = U_t Q_t V_t^\top
    \text{ where } (M^\top L)_{t,t} = U_t S_t V_t^\top 
    \\ 
    \text{denotes an SVD and }
    Q_t \in \mathbf Q(\operatorname{rk}(S_t))
    \end{array}
    \right\};
    \end{equation}
        \item $\tr((M^\top L)_{t,t} P^\top_t)  
        = \tr(S_t)$ for all $t = 1, \dots, T$, where $S_t$ is the diagonal matrix containing the singular values of $(M^\top L)_{t,t}$.
    \end{enumerate}   
    In particular, ${\rm Id} \in \mathbf O^*(L,M)$ if and only if
$(M^\top L)_{t,t}$ is positive semi-definite for all $t = 1, \dots, T$.
\end{lemma}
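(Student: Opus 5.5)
The plan is to reduce everything to the block-wise trace maximization already isolated in the proof of \Cref{lem:ABW-distance}, and then apply \Cref{prp:optimizer-characterization} blockwise. For $P=\diag(P_1,\dots,P_T)\in\mathbf O$, bilinearity of the Frobenius inner product gives
\[
\|L-MP\|_{\rm F}^2=\|L\|_{\rm F}^2+\|M\|_{\rm F}^2-2\sum_{t=1}^T \tr\bigl((M^\top L)_{t,t}P_t^\top\bigr),
\]
because $\langle L,MP\rangle_{\rm F}=\tr(P^\top M^\top L)$ and $P$ is block diagonal. By the von Neumann bound \eqref{eq:trace-optimizer}, each summand is at most $\tr(S_t)$. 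Moreover, by \Cref{lem:ABW-distance} and \eqref{eq:formula.d}, $d_{\rm ABW}^2(L,M)=\|L\|_{\rm F}^2+\|M\|_{\rm F}^2-2\sum_t\tr(S_t)$. Hence (i) holds if and only if every summand attains its upper bound, which is exactly (iv). This gives (i)$\Leftrightarrow$(iv).

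Next I apply \Cref{prp:optimizer-characterization} for each $t$ with $A:=(M^\top L)_{t,t}=U_tS_tV_t^\top$ and $O:=P_t^\top$. The condition $\tr(AP_t^\top)=\tr(S_t)$ is equivalent to $P_t^\top=V_tQU_t^\top$ with $Q\in\mathbf Q(\operatorname{rk}(S_t))$. Transposing gives $P_t=U_tQ^\top V_t^\top$, and $\mathbf Q(r)$ is closed under transposition, so this is precisely the description in \eqref{eq:O(L,M)}. One point needs care here: the set in \eqref{eq:O(L,M)} should be read as ranging over all SVDs, or over a fixed one. I will fix one SVD per block, since \Cref{prp:optimizer-characterization} holds for any fixed SVD, and note that the resulting set is independent of the choice because it is characterized by (iv). This gives (iv)$\Leftrightarrow$(iii).

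For (ii), the same lemma shows that $\tr(AP_t^\top)=\tr(S_t)$ is equivalent to $AP_t^\top$ being positive semi-definite. It remains to show that $P_t^\top A$ is positive semi-definite as well. With $P_t=U_tQV_t^\top$ and $Q\in\mathbf Q(r)$, where $r=\operatorname{rk}(S_t)$, I compute
\[
P_t^\top A=V_tQ^\top U_t^\top U_tS_tV_t^\top=V_tQ^\top S_tV_t^\top=V_tS_tV_t^\top ,
\]
using $Q^\top S_t=S_t$ because $S_t$ vanishes outside its first $r$ diagonal entries. This matrix is positive semi-definite. Conversely, (ii) contains positive semi-definiteness of $AP_t^\top$, which already gives (iv). So (ii)$\Leftrightarrow$(iii)/(iv).

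The final assertion is the case $P=\mathrm{Id}$ of (ii), since both products then reduce to $(M^\top L)_{t,t}$. I do not expect a real obstacle. The only delicate step is the bookkeeping of transposes and the role of $U_t$ versus $V_t$ when matching \Cref{prp:optimizer-characterization} (stated for $AO$) with the form $P_t=U_tQ_tV_t^\top$ in (iii), together with the remark that the set does not depend on the chosen SVD.
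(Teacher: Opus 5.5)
Your proposal is correct and follows the paper's proof: you get (i)$\Leftrightarrow$(iv) from the blockwise trace decomposition and the von Neumann bound, and (ii)--(iv) from \Cref{prp:optimizer-characterization} applied with $A=(M^\top L)_{t,t}$ and $O=P_t^\top$. The only cosmetic difference is how you show that $P_t^\top (M^\top L)_{t,t}$ is positive semi-definite: you compute it explicitly as $V_tS_tV_t^\top$, whereas the paper notes that it equals the orthogonal conjugate $P_t^\top\bigl((M^\top L)_{t,t}P_t^\top\bigr)P_t$.
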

\begin{proof}
First note that the equivalence of (i) and (iv) follows from \eqref{eq:sup.equal.sum.sup} and \eqref{eq:trace-optimizer}. 
Furthermore, applying \Cref{prp:optimizer-characterization} (with $A=(M^\top L)_{t,t}$ and $O=P^\top$) gives the equivalence of (ii), (iii) and (iv), 
where we additionally note that a matrix $A$ is positive semi-definite if and only if $O^\top A O$ is positive semi-definite for every (block-) orthogonal $O$. 
\end{proof}
Finally, we collect a few simple yet important facts about ${\bf O}^\ast(L,M)$ that will be used repeatedly in the remainder of this section.

\begin{lemma} \label{lem:O^*-facts}
For every $L,M \in \mathbf L$, the following hold:
\begin{enumerate}[(i)]
    \item
    $O \in \mathbf O^*(L,L)$ if and only if $O \in \mathbf O$ and $LO = L$.
    In particular, ${\rm Id} \in \mathbf O^*(L,L) $;
    \item
    If $O \in \mathbf O^*(L,L)$ and $P \in \mathbf O^*(L,M)$, then $PO \in \mathbf O^*(L,M)$;
    \item
    If $P \in \mathbf O^*(L,M)$, then $P^\top\in \mathbf O^*(M,L)$;
    \item
    If $(M^\top L)_{t,t}$ is symmetric for every $t = 1, \dots, T$, then $\mathbf O^*(L,M) =\bO^*(M,L)$.
    \end{enumerate}
\end{lemma}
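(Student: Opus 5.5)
We prove the four items one by one, working directly from \Cref{lem:ABW-distance} and the characterizations in \Cref{lem:ABW-optimizers}.

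For (i), we use that $d_{\rm ABW}(L,L)=0$, since the choice $O={\rm Id}$ gives $\|L-L\|_{\rm F}=0$. Hence $O\in\mathbf O^*(L,L)$ if and only if $\|L-LO\|_{\rm F}=0$, that is, $LO=L$. In particular ${\rm Id}\in\mathbf O^*(L,L)$.

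For (ii), let $O\in\mathbf O^*(L,L)$ and $P\in\mathbf O^*(L,M)$. We compute
\[\|L-MPO\|_{\rm F}=\|(LO^\top-MP)O\|_{\rm F}=\|LO^\top-MP\|_{\rm F},\]
using orthogonal invariance of the Frobenius norm. From $LO=L$ we get $LO^\top=L$, since $O^\top=O^{-1}$ and $L=LO^{-1}$. Therefore $\|L-MPO\|_{\rm F}=\|L-MP\|_{\rm F}=d_{\rm ABW}(L,M)$. Because $PO\in\mathbf O$, this shows $PO\in\mathbf O^*(L,M)$.

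For (iii), we again use orthogonal invariance:
\[\|M-LP^\top\|_{\rm F}=\|(MP-L)P^\top\|_{\rm F}=\|L-MP\|_{\rm F}=d_{\rm ABW}(L,M).\]
The distance is symmetric, since $\inf_O\|L-MO\|_{\rm F}=\inf_O\|M-LO^\top\|_{\rm F}$ and $O\mapsto O^\top$ is a bijection of $\mathbf O$. Hence $d_{\rm ABW}(L,M)=d_{\rm ABW}(M,L)$, and so $P^\top\in\mathbf O^*(M,L)$.

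For (iv), we use criterion (ii) of \Cref{lem:ABW-optimizers}. It says $P\in\mathbf O^*(L,M)$ if and only if $A_tP_t^\top$ and $P_t^\top A_t$ are positive semi-definite for all $t$, where $A_t=(M^\top L)_{t,t}$. Correspondingly, $P\in\mathbf O^*(M,L)$ if and only if $A_t^\top P_t^\top$ and $P_t^\top A_t^\top$ are positive semi-definite, because $(L^\top M)_{t,t}=A_t^\top$. If $A_t$ is symmetric, these two conditions coincide, so $\mathbf O^*(L,M)=\mathbf O^*(M,L)$.

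The only point requiring a little care is in (ii): deriving $LO^\top=L$ from $LO=L$, which needs the inverse of $O$ to be $O^\top$. The rest is direct substitution.
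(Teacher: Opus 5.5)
Your proof is correct, and for (i)--(iii) it matches the paper's argument almost word for word. In (ii) the paper writes $\|L-MP\|_{\rm F}=\|(L-MP)O\|_{\rm F}=\|L-MPO\|_{\rm F}$, using $LO=L$ directly, so your detour through $LO^\top=L$ is unnecessary, though harmless.

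The only real difference is in (iv). The paper just says it is a consequence of (iii). Read literally, (iii), applied in both directions, only gives $\mathbf O^*(M,L)=\{P^\top : P\in\mathbf O^*(L,M)\}$. One still has to check that $\mathbf O^*(L,M)$ is closed under transposition when each $A_t=(M^\top L)_{t,t}$ is symmetric, for instance because then $\tr(A_tP_t^\top)=\tr(A_tP_t)$. Your route through the positive semi-definiteness criterion of \Cref{lem:ABW-optimizers}(ii) avoids this step and is fully self-contained.

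An even shorter variant is available. $\mathbf O^*(L,M)$ and $\mathbf O^*(M,L)$ are the maximizers over $\mathbf O$ of $\sum_t\tr(A_tO_t^\top)$ and $\sum_t\tr(A_t^\top O_t^\top)$, respectively. These are literally the same function of $O$ when $A_t=A_t^\top$.
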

\begin{proof}
To prove  (i), note that clearly  $O \in \mathbf O^*(L,L) $
    if and only if $O \in \mathbf O$ and 
    $0 
    = \inf_{P \in \mathbf O} \|L-L P \|_{\rm F}
    = \|L-LO \|_{\rm F},
    $
    which is equivalent to $LO=L$.

As for (ii), we have that
    \[ \inf_{\tilde P \in \mathbf O} \|L-M \tilde P \|_{\rm F} 
    = \| L - MP\|_{\rm F} 
	=\| (L - MP)O\|_{\rm F} 
	=\| L - MPO\|_{\rm F}\]
    and since  $PO\in{\bf O}$, the claim follows from the definition of ${\bf O}^\ast(L,M)$.

Finally, (iii) follows from 
\[ \| L - MP\|_{\rm F} 
= \| (L - MP)P^\top\|_{\rm F} 
= \| M - LP^\top\|_{\rm F},  \]
and (iv) is a consequence of (iii).
\end{proof}

\subsection{Properties of geodesics}

A key step in our analysis concerns understanding the structure of the set 
\begin{equation} \label{eq:V-def}
\mathbf V(L) := \left\{MP -L : M  \in \mathbf L, P \in \mathbf O^*(L,M) \right\}, \qquad L \in \mathbf L,
\end{equation}
which, as we shall see, corresponds to the geodesic curves starting from $L$.
\begin{proposition}
\label{prop:O.equal.O}
    Let $L \in \mathbf L$ and $V \in \mathbf V(L)$. 
    Then, for all $u \in [0,1)$,
    \[
     \mathbf O^*(L,L)= \mathbf O^*(L,L+uV).
    \]
    Moreover,  $\mathbf O^*(L,L) \subseteq \mathbf O^*(L,L+V)$.
\end{proposition}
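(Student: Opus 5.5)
The plan is to reduce everything to a statement about ranges of positive semi-definite diagonal blocks. Write $V = MP - L$ with $P\in\mathbf O^*(L,M)$ and set $M' := MP$. Since $\|L - M'\|_{\rm F} = d_{\rm ABW}(L,M) = d_{\rm ABW}(L,M')$, we have ${\rm Id}\in\mathbf O^*(L,M')$. By the last assertion of \Cref{lem:ABW-optimizers}, each block $C_t := (M'^\top L)_{t,t}$ is then positive semi-definite. Also $L + uV = (1-u)L + uM'$, so with $B_t := (L^\top L)_{t,t}$ we get
\[
A_t(u) := ((L+uV)^\top L)_{t,t} = (1-u)B_t + uC_t ,
\]
a convex combination of positive semi-definite matrices. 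In particular $A_t(u)$ is positive semi-definite for every $u\in[0,1]$.

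Next, I would describe $\mathbf O^*(L,N)$ whenever all blocks $A_t := (N^\top L)_{t,t}$ are positive semi-definite. In that case an SVD is $A_t = U_tS_tU_t^\top$, and \Cref{lem:ABW-optimizers}(iii) gives $P_t = U_tQ_tU_t^\top$ with $Q_t\in\mathbf Q(\operatorname{rk}A_t)$. Equivalently, $P_t\in\mathrm O(d)$ satisfies $P_tx = x$ for all $x\in\operatorname{range}(A_t)$; orthogonality then forces $P_t$ to preserve $\ker A_t$, and $P_t$ is arbitrary there. Thus
\[
\mathbf O^*(L,N) = \{\diag(P_1,\dots,P_T)\in\mathbf O : P_t|_{\operatorname{range}(A_t)} = {\rm id}\ \text{for all } t\}.
\]
This set is determined by the subspaces $\operatorname{range}(A_t)$ alone, and it shrinks as these subspaces grow. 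Applying this with $N=L$ describes $\mathbf O^*(L,L)$ through $\operatorname{range}(B_t)$, consistently with \Cref{lem:O^*-facts}(i).

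It therefore remains to compare ranges, which I expect to be the only real content. The key observation is $\ker B_t\subseteq\ker C_t$. Indeed, $B_t = L_{\cbullet,t}^\top L_{\cbullet,t}$, so $\ker B_t = \ker L_{\cbullet,t}$. On the other hand $C_t = M'^\top_{\cbullet,t}L_{\cbullet,t}$, since $(AB)_{t,t} = A_{t,\cbullet}B_{\cbullet,t}$; hence $L_{\cbullet,t}x=0$ implies $C_tx=0$. Because $C_t$ is symmetric, taking orthogonal complements gives $\operatorname{range}(C_t)\subseteq\operatorname{range}(B_t)$.

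For positive semi-definite matrices and $u\in[0,1)$, the range of $(1-u)B_t + uC_t$ equals $\operatorname{range}(B_t)+\operatorname{range}(C_t)$, since the kernel of the sum is $\ker B_t\cap\ker C_t$. By the inclusion just shown, this is $\operatorname{range}(B_t)$. Hence $\mathbf O^*(L,L+uV) = \mathbf O^*(L,L)$ for all $u\in[0,1)$. At $u=1$, the relevant range is $\operatorname{range}(C_t)\subseteq\operatorname{range}(B_t)$. Any $O\in\mathbf O^*(L,L)$ fixes $\operatorname{range}(B_t)$ blockwise, so it also fixes $\operatorname{range}(C_t)$, giving $\mathbf O^*(L,L)\subseteq\mathbf O^*(L,L+V)$.

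The main care point is the second step: the characterization of optimizers for positive semi-definite blocks must be stated as ``fixes the range''. This makes it independent of the particular SVD chosen, which matters because singular vectors are not unique.
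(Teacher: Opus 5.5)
Your proof is correct and follows essentially the paper's route: the convex-combination formula together with the inclusion $\ker (L^\top L)_{t,t}=\ker L_{\cbullet,t}\subseteq\ker C_t$ is exactly \Cref{lem:O.equal.O.1}, and your description of optimizers for positive semi-definite blocks as the orthogonal maps fixing $\operatorname{range}(A_t)$ is a basis-free version of the SVD comparison carried out in \Cref{lem:O.equal.O.2}. The only small deviation is at $u=1$. There you use $\operatorname{range}(C_t)\subseteq\operatorname{range}((L^\top L)_{t,t})$, whereas the paper uses ${\rm Id}\in\mathbf O^*(L,L+V)$ combined with \Cref{lem:O^*-facts}(ii).
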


\begin{proof}
    The proof in case that $u\in[0,1)$ follows from   \Cref{lem:O.equal.O.1} and \Cref{lem:O.equal.O.2}.
    Moreover, since $((L+V)^\top L)_{t,t}$ is positive semi-definite for all $t=1,\dots, T$ by \Cref{lem:O.equal.O.1}, \Cref{special-optimizers} implies that ${\rm Id} \in \mathbf O^*(L,L+V)$, thus
    $\mathbf O^*(L,L) \subseteq  \mathbf O^*(L,L+V)$ by  \Cref{lem:O^*-facts} (ii).
\end{proof}

From the above result, we immediately get:
\begin{corollary}
\label{lem:distance.for.V(L)}
    For every $L\in {\bf L}$, $V\in {\bf V}(L)$, and $u\in[0,1]$,   $d_{\rm ABW}(L, L+ uV) = u\|V\|_{\rm F}$.
\end{corollary}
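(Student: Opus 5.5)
The plan is to combine \Cref{prop:O.equal.O} with the definition of ${\bf O}^\ast$. Fix $L\in{\bf L}$, $V\in{\bf V}(L)$ and $u\in[0,1]$. By definition of $d_{\rm ABW}$ via \Cref{lem:ABW-distance},
\[
d_{\rm ABW}(L,L+uV)=\|L-(L+uV)P\|_{\rm F}\qquad\text{for any } P\in{\bf O}^\ast(L,L+uV),
\]
so the task reduces to exhibiting a concrete optimizer $P$ for which the right-hand side equals $u\|V\|_{\rm F}$. The natural candidate is $P={\rm Id}$.

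For $u\in[0,1)$, \Cref{prop:O.equal.O} gives ${\bf O}^\ast(L,L+uV)={\bf O}^\ast(L,L)$, and ${\rm Id}\in{\bf O}^\ast(L,L)$ by \Cref{lem:O^*-facts}(i). For $u=1$, the inclusion ${\bf O}^\ast(L,L)\subseteq{\bf O}^\ast(L,L+V)$ from the same proposition again yields ${\rm Id}\in{\bf O}^\ast(L,L+V)$. So in every case ${\rm Id}$ is optimal, and
\[
d_{\rm ABW}(L,L+uV)=\|L-(L+uV)\|_{\rm F}=u\|V\|_{\rm F}.
\]

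There is no genuine obstacle here; all the work sits in \Cref{prop:O.equal.O} (through \Cref{lem:O.equal.O.1,lem:O.equal.O.2}), which is assumed. The only point to watch is handling the endpoint $u=1$ separately, since there we have only an inclusion rather than equality of optimizer sets, but inclusion is all that is needed.
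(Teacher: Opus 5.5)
Your proof is correct and is the argument the paper intends when it says the corollary follows "immediately" from \Cref{prop:O.equal.O}. That proposition, via equality of optimizer sets for $u\in[0,1)$ and inclusion at $u=1$, gives ${\rm Id}\in{\bf O}^\ast(L,L+uV)$, hence $d_{\rm ABW}(L,L+uV)=\|uV\|_{\rm F}=u\|V\|_{\rm F}$, and your separate handling of the endpoint matches how the proposition itself is stated and proved.
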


The characterization of  ${\bf O}^\ast(L,M)$ in \Cref{special-optimizers} relies on the rank of the involved matrices. 
This becomes crucial in what follows, 
and it is useful to note the following. 

\begin{remark} \label{rmk:rank-im-ker}
Let $L,M \in \mathbf L$ and $t\in\{1,\dots,T\}$.
    The following are equivalent:
    \begin{enumerate}[(i)]
        \item $\operatorname{rk}((L^\top L)_{t,t}) = \operatorname{rk}((M^\top L)_{t,t})$;
        \item $\operatorname{img}((L^\top L)_{t,t}) = \operatorname{img}((M^\top L)_{t,t}^\top)$;
        \item $\operatorname{ker}((L^\top L)_{t,t}) = \operatorname{ker}((M^\top L)_{t,t})$.
    \end{enumerate}
This follows from the rank-nullity theorem and since
\[
\operatorname{img}\left((L^\top L)_{t,t}\right)^\perp
=\ker\left((L^\top L)_{t,t}\right)
\subseteq \ker\left((M^\top L)_{t,t}\right)
= \operatorname{img}\left((M^\top L)_{t,t}^\top\right)^\perp.
\]
\end{remark}
\begin{lemma} \label{lem:O.equal.O.1}
    Let $L \in \mathbf L$, $V \in \mathbf V(L)$ and $t\in\{1,\dots,T\}$.
    Then, for every $u \in [0,1]$, 
    the matrix $( ( L + uV ) ^\top L )_{t,t}$ is positive semi-definite, and for every $u \in [0,1)$,
    \[
    \operatorname{rk}\left( ( (L+uV)^\top L )_{t,t}\right) 
    = \operatorname{rk}\left(( L^\top L)_{t,t} \right).
    \]  
\end{lemma}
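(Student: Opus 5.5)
Write $V = MP - L$ with $M\in\mathbf L$ and $P\in\mathbf O^*(L,M)$, and fix $t$. Set $A := (L^\top L)_{t,t}$, which is positive semi-definite, and $B := ((MP)^\top L)_{t,t} = P_t^\top (M^\top L)_{t,t}$. By \Cref{special-optimizers}~(ii), $B$ is positive semi-definite. Since the diagonal block of a product is linear in each factor,
\[
((L+uV)^\top L)_{t,t} = ((1-u)L + uMP)^\top L)_{t,t} = (1-u)A + uB .
\]
For $u\in[0,1]$ this is a convex combination of two positive semi-definite matrices, so it is positive semi-definite. This settles the first claim.

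For the rank statement I use the elementary fact that, for positive semi-definite $A,B$ and $a>0$, $b\ge 0$,
\[
\ker(aA+bB)=\ker A\cap\ker B .
\]
Indeed, if $x^\top(aA+bB)x=0$, then $x^\top Ax=0$ and $x^\top Bx=0$, and for a positive semi-definite matrix $x^\top Ax=0$ forces $Ax=0$. Applying this with $a=1-u>0$ for $u\in[0,1)$ gives
\[
\ker\bigl((1-u)A+uB\bigr)=\ker A\cap \ker B .
\]
It therefore suffices to show $\ker A\subseteq\ker B$, since then the kernel equals $\ker A$ and the ranks coincide by rank--nullity.

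To prove $\ker A\subseteq \ker B$, note that $(L^\top L)_{t,t}=L_{\cbullet,t}^\top L_{\cbullet,t}$. Hence $Ax=0$ implies $|L_{\cbullet,t}x|^2 = x^\top A x = 0$, i.e.\ $L_{\cbullet,t}x=0$. Then
\[
Bx=(MP)_{\cbullet,t}^\top L_{\cbullet,t}x=0 .
\]
This is the same observation as in \Cref{rmk:rank-im-ker}. This completes the argument.

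The only real point is the kernel identity for sums of positive semi-definite matrices, together with the restriction $u<1$, which guarantees a strictly positive weight on $A$. At $u=1$ the rank can drop, since $B$ alone may have a larger kernel, which explains why the statement excludes that endpoint.
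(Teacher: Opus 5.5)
Your proof is correct and is essentially the paper's argument: the same expansion $((L+uV)^\top L)_{t,t}=(1-u)(L^\top L)_{t,t}+uP_t^\top(M^\top L)_{t,t}$, positive semi-definiteness from \Cref{lem:ABW-optimizers}, and the kernel comparison via $\ker((L^\top L)_{t,t})=\ker(L_{\cbullet,t})$ in one direction and positive semi-definiteness with weight $1-u>0$ in the other. One small slip: the identity $\ker(aA+bB)=\ker A\cap\ker B$ requires $b>0$, since for $b=0$ (i.e.\ $u=0$) you cannot conclude $x^\top Bx=0$; this does not affect the result, because your argument only uses $\ker(aA+bB)\subseteq\ker A$ for $a>0$ together with $\ker A\subseteq\ker B$.
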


\begin{proof} 
    We have $V = MP - L$ for $M \in \mathbf L$ and $P\in \mathbf O^*(L,M)$, hence
    \begin{align}
        \label{eq:geodesic-expanded}
    ( ( L + uV ) ^\top L )_{t,t}
    = (1-u) (L^\top L)_{t,t} + u P_t^\top (M^\top L)_{t,t}.
    \end{align}
    Since $ P_t^\top (M^\top L)_{t,t}$ is positive semi-definite by \Cref{lem:ABW-optimizers}, 
     $((L + uV)^\top L )_{t,t}$ is positive semi-definite for all $u \in [0,1]$.

    For the second assertion, by \Cref{rmk:rank-im-ker}, it suffices to show that $  \operatorname{ker}( ( (L+uV)^\top L )_{t,t})     = \operatorname{ker}(( L^\top L)_{t,t} )$ for all $t=1,\dots,T$.
    To that end, observe that 
    $\ker((L^\top L)_{{t,t}}) = \ker(L_{\cbullet, t})$
    and $P_t^\top(M^\top L)_{t,t} =P^\top_t M_{\cbullet,t}^\top L_{\cbullet, t}$.
    Thus, for every $x \in \ker((L^\top L)_{{t,t}})$, 
    we have that $((L + uV) ^\top L)_{t,t}x = 0$, 
    hence \[\ker((L^\top L)_{{t,t}}) \subseteq \operatorname{ker}((( L + uV ) ^\top L )_{t,t})\]
    for all $u \in [0,1]$. 
    On the other hand, from \eqref{eq:geodesic-expanded} and positive semi-definiteness, we have for all $u\in[0,1)$ that
    \[\ker((( L + uV ) ^\top L )_{t,t})
    \subseteq \ker((L^\top L)_{{t,t}}).\qedhere\]  
\end{proof}

\begin{lemma}
\label{lem:O.equal.O.2}
    For every $L, M \in \mathbf{L}$,     the following are equivalent:
    \begin{enumerate}[(i)]
        \item $(M^\top L)_{t,t}$ is positive semi-definite and $\operatorname{rk}((L^\top L)_{t,t}) = \operatorname{rk}((M^\top L)_{t,t})$ 
    for all $t = 1, \dots, T$;
        \item $\mathbf O^*(L,M) = \mathbf O^*(L,L)$.
    \end{enumerate}
\end{lemma}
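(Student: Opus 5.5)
We prove both implications blockwise, using the characterization of optimizers in \Cref{lem:ABW-optimizers} and the orthogonality structure of \Cref{prp:optimizer-characterization}. Fix $t$ and write $A_t := (M^\top L)_{t,t}$ and $B_t := (L^\top L)_{t,t}$. Since $B_t$ is positive semi-definite, \Cref{lem:ABW-optimizers} (iii) with $M=L$ shows that $\mathbf O^*(L,L)$ consists of the block-diagonal matrices whose blocks have the form $P_t = W_t Q_t W_t^\top$. Here $B_t = W_t D_t W_t^\top$ is an eigendecomposition and $Q_t \in \mathbf Q(\operatorname{rk}(B_t))$. Equivalently, $P_t$ acts as the identity on $\operatorname{img}(B_t)$ and as an arbitrary orthogonal map on $\ker(B_t)$. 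By \Cref{lem:O^*-facts} (i), this is the same as requiring $L_{\cbullet,t}P_t = L_{\cbullet,t}$. The proof therefore reduces to comparing this set with the analogous description of $\mathbf O^*(L,M)$.

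For (i)$\Rightarrow$(ii), assume that $A_t$ is positive semi-definite with $\operatorname{rk}(A_t)=\operatorname{rk}(B_t)$. By \Cref{rmk:rank-im-ker}, $\ker(A_t)=\ker(B_t)$, and since $A_t$ is symmetric we also get $\operatorname{img}(A_t)=\operatorname{img}(B_t)$. We take an SVD of $A_t$ of the form $A_t = U_t S_t U_t^\top$, choosing the eigenvectors so that the first $r=\operatorname{rk}(A_t)$ columns of $U_t$ span $\operatorname{img}(B_t)$ and the remaining columns span $\ker(B_t)$. By \Cref{lem:ABW-optimizers} (iii), the optimizers are then $U_tQ_tU_t^\top$ with $Q_t\in\mathbf Q(r)$, i.e.\ orthogonal maps that fix $\operatorname{img}(B_t)$ pointwise and act arbitrarily on $\ker(B_t)$. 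This is exactly the description of the $t$-th blocks of $\mathbf O^*(L,L)$. Since both sets are products over $t$ of their blocks, they coincide.

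For (ii)$\Rightarrow$(i), we argue as follows.
\begin{enumerate}[(a)]
\item Since $\mathrm{Id}\in\mathbf O^*(L,L)=\mathbf O^*(L,M)$, the final statement of \Cref{lem:ABW-optimizers} gives that every $A_t$ is positive semi-definite.
\item The set of admissible blocks has dimension $\dim \mathrm O(d-\operatorname{rk}(A_t))$ on the $\mathbf O^*(L,M)$ side and $\dim\mathrm O(d-\operatorname{rk}(B_t))$ on the $\mathbf O^*(L,L)$ side, so equality of the sets forces equal ranks.
\item To make (b) less reliant on dimension counting, we argue directly with subspaces. By \Cref{rmk:rank-im-ker}, $\ker(B_t)=\ker(L_{\cbullet,t})\subseteq\ker(A_t)$ always holds. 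Hence the fixed-point subspace $\operatorname{img}(A_t)$ of the $t$-th blocks of $\mathbf O^*(L,M)$ is contained in the fixed-point subspace $\operatorname{img}(B_t)$ of those of $\mathbf O^*(L,L)$.
\item If $\operatorname{rk}(A_t)<\operatorname{rk}(B_t)$, we pick unit vectors $x\in\operatorname{img}(B_t)\cap\operatorname{img}(A_t)^\perp$ and $y\in\ker(B_t)$. Then the reflection $I-2xx^\top$ is a valid block of $\mathbf O^*(L,M)$, since it fixes $\operatorname{img}(A_t)$. However, it does not fix $\operatorname{img}(B_t)$, so it is not a valid block of $\mathbf O^*(L,L)$. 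This contradicts (ii). (The vector $y$ is in fact not needed; the reflection alone gives the contradiction.)
\end{enumerate}

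The main obstacle is the bookkeeping in the first direction. A general SVD $U_tS_tV_t^\top$ of $A_t$ with $U_t\neq V_t$ has to be replaced by a symmetric one, and we must check that the description in \Cref{lem:ABW-optimizers} (iii) does not depend on which SVD is chosen. The convention $\sigma_1\ge\dots\ge\sigma_d$ helps here: it places the kernel in the last $d-r$ coordinates, so the set $\{UQU^\top : Q \in \mathbf Q(r)\}$ is determined by $\operatorname{img}(A_t)$ alone.
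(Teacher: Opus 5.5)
Your proof is correct and follows essentially the paper's route. In both directions you compare the blockwise optimizer sets $\{U_tQ_tU_t^\top : Q_t\in\mathbf Q(r)\}$ through the subspaces they fix pointwise, $\operatorname{img}(A_t)$ and $\operatorname{img}(B_t)$; your reflection $I-2xx^\top$ in (c)--(d) gives an explicit witness for the converse, which the paper only sketches as ``similar arguments''.

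Drop step (b), however. Dimension counting is not valid on its own: when $\operatorname{rk}(B_t)=d$ and $\operatorname{rk}(A_t)=d-1$, both block sets are zero-dimensional ($\dim\mathrm O(0)=\dim\mathrm O(1)=0$) and differ only in cardinality. That is exactly the case your argument in (d) handles correctly.
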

\begin{proof}
    For $ t = 1, \dots, T$, set
    \[ r_t := \operatorname{rk}\left((M^\top L)_{t,t}\right)
    \quad \text{and}\quad
    \tilde r_t := \operatorname{rk}\left((L^\top L)_{t,t}\right).
    \] 
    We start by showing that (i) implies (ii). 
    Since $(M^\top L)_{t,t}$ is positive semi-definite, $ {\rm Id }\in \mathbf O^*(L,M)$ by \Cref{special-optimizers} and therefore $\mathbf O^*(L,L) \subseteq \mathbf O^*(L,M)$ by \Cref{lem:O^*-facts} (ii).
    To prove the reverse inclusion, let
    \[(M^\top L)_{t,t} = V_t S_t V_t^\top
    \quad\text{and}\quad
    (L^\top L)_{t,t} = W_t \tilde S_t W_t^\top
    \]denote respective SVDs
    and consider $P = \diag(P_1, \dots, P_T)  \in \mathbf O^*(L,M)$. 
    Then, using \eqref{eq:O(L,M)}, each $P_t$ is of the form $P_t  = V_t Q_t V_t^\top$ for some $Q_t \in \mathbf Q(r_t)$. 
    Now note that $r_t = \tilde r_t$, hence our claim follows if we can find $\tilde Q_t \in \mathbf Q(r_t)$ 
    such that $P_t  = W_t \tilde Q_t W_t^\top$.
    
    To this end, consider the representation 
    \[ V_t = (V' , V'')
    \quad\text{and}\quad
    W_t = (W' , W'')\]
    for $V' \in \mathbb R^{d \times r_t}$ 
    and $V'' \in \mathbb R^{d \times (d-r_t)}$ and, similarly, $W'\in \mathbb R^{d \times r_t}$ 
    and $W'' \in \mathbb R^{d \times (d-r_t)}$. 
    
    By \Cref{rmk:rank-im-ker} and using that $( M^\top L )_{t,t}=( M^\top L )_{t,t}^\top$, we have 
    $ \operatorname{img}( ( L^\top L )_{t,t} ) = \operatorname{img}( ( M^\top L )_{t,t})$, 
    hence the columns of $V'$ and $W'$ both form an orthonormal basis of the same image space. 
    Consequently, the columns of $V''$ and $W''$ form orthonormal bases of its orthogonal complement.
    Thus for any column $w_j$ of $W'$ there exists a vector $u_j \in \mathbb R^{r_t}$  such that $w_j = V' u_j$. 
    Hence $W'= V'U'$ for   $U' := (u_1,\dots, u_{r_t}) \in \mathbb R^{r_t \times r_t}$.
    Analogously, we find $U'' \in \mathbb R^{(d-r_t) \times (d-r_t)}$ such that $W'' = V'' U''$.
    Next, note that we have 
    $(W')^\top W' = (V')^\top V' = {\rm Id}$ due to orthogonality of the columns.     
    Therefore, 
    \[
    {\rm Id} = (W')^\top W' = (U')^\top (V')^\top V'U'
    = (U')^\top U',
    \]
    which shows that $U'$ is orthogonal. 
    In a similar fashion,  $U''$ is orthogonal. 
    In particular, setting 
    \[U := \begin{pmatrix}
        U'  & 0  \\ 0  & U''
    \end{pmatrix} \in \mathrm O(d),
    \]
    we have $W_t = V_tU$ and furthermore
    \[
    P_t = V_t Q_t V_t^\top 
    = V_t U U^\top Q_t U U^\top V_t^\top
    = W_t U^\top Q_t U W_t.
    \]
    Now clearly $\tilde Q := U^\top Q_t U$ is again in $\mathbf Q(r_t)$ which concludes the argument.

To show that (ii) implies (i), note that it is clear from \Cref{special-optimizers} 
that $( M^\top L )_{t,t}$ is positive semi-definite for all $t = 1, \dots, T$. 
Thus we can use arguments similar to those above to show that 
$\mathbf O^\ast(L,M) = \mathbf O^\ast(L,L)$
if and only if  for all $t = 1, \dots, T$,
\[
V_t \mathbf Q(r_t)V_t^\top = W_t \mathbf Q(\tilde r_t)W_t^\top ,
\]
which is equivalent to  $r_t = \tilde r_t$ for all $t = 1, \dots, T$.
\end{proof}
An immediate consequence of \Cref{lem:O.equal.O.2} is the following convenient characterization of optimizers in the full rank case. 
\begin{corollary} \label{cor:O=Id}
For every $L,M\in{\bf L}$, $(M^\top L)_{t,t}$   is positive definite for all $ t = 1, \dots T $ if and and only if $ \mathbf O^*(L,M) = \{\rm Id\}$. 
\end{corollary}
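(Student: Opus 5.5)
The plan is to reduce the statement to \Cref{lem:O.equal.O.2} applied with $M$ replaced by $L$ on one side. We first compute ${\bf O}^\ast(L,L)$ directly, and then compare the two descriptions of ${\bf O}^\ast(L,M)$.

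\emph{Forward direction.} Suppose $(M^\top L)_{t,t}$ is positive definite for all $t$. The first step is to show that $\bO^*(L,M)$ is a singleton. By \Cref{lem:ABW-optimizers}(iii), every $P\in\bO^*(L,M)$ has blocks $P_t=U_tQ_tV_t^\top$ with $Q_t\in\mathbf Q(\operatorname{rk}(S_t))$. Since $\operatorname{rk}(S_t)=d$, the set $\mathbf Q(d)=\{{\rm Id}\}$, so $P_t=U_tV_t^\top$ is uniquely determined. The SVD is not unique, but \Cref{lem:ABW-optimizers}(i) characterizes the set intrinsically, so the description in (iii) holds for any choice of SVD and uniqueness follows. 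The second step is to show that this unique element is ${\rm Id}$. Positive semi-definiteness of $(M^\top L)_{t,t}$ gives ${\rm Id}\in\bO^*(L,M)$ by the last assertion of \Cref{lem:ABW-optimizers}. Hence $\bO^*(L,M)=\{{\rm Id}\}$.

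\emph{Converse.} Suppose $\bO^*(L,M)=\{{\rm Id}\}$. Then ${\rm Id}$ is optimal, so by \Cref{lem:ABW-optimizers} each $(M^\top L)_{t,t}$ is positive semi-definite. It remains to exclude rank deficiency. Assume $r_t:=\operatorname{rk}((M^\top L)_{t,t})<d$ for some $t$. Write $(M^\top L)_{t,t}=V_tS_tV_t^\top$, which is a valid SVD because the matrix is symmetric positive semi-definite. Then every $P_t=V_tQ_tV_t^\top$ with $Q_t\in\mathbf Q(r_t)$ is admissible, by \Cref{lem:ABW-optimizers}(iii). Since $d-r_t\ge1$, we can take $Q_t={\rm diag}({\rm Id},-{\rm Id})\neq{\rm Id}$, which gives $P_t\neq{\rm Id}$. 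Setting all other blocks equal to ${\rm Id}$ produces an element of $\bO^*(L,M)$ different from ${\rm Id}$, a contradiction. Hence each $(M^\top L)_{t,t}$ is positive definite.

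\emph{Alternative route via \Cref{lem:O.equal.O.2}.} The argument can also be run through \Cref{lem:O.equal.O.2}, using that $\bO^*(L,L)=\{{\rm Id}\}$ exactly when every $(L^\top L)_{t,t}$ is positive definite. However, the direct computation above avoids introducing $L^\top L$ altogether.

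The only delicate point is the well-definedness of the set in \Cref{lem:ABW-optimizers}(iii) with respect to the choice of SVD. This is handled by its equivalence with (i), so it is not a real obstacle.
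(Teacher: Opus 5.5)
Your proof is correct, but it does not follow the paper's route. Also, your opening paragraph announces a plan (reduce to \Cref{lem:O.equal.O.2} and compute $\mathbf O^\ast(L,L)$) that your actual argument never uses, so remove it.

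The paper only remarks that the corollary is an immediate consequence of \Cref{lem:O.equal.O.2}. Spelled out, that route compares $\mathbf O^\ast(L,M)$ with $\mathbf O^\ast(L,L)=\{O\in\mathbf O: LO=L\}$. In the forward direction, positive definiteness of $(M^\top L)_{t,t}$ transfers full rank to $(L^\top L)_{t,t}$ via $\ker((L^\top L)_{t,t})=\ker(L_{\cbullet,t})\subseteq\ker((M^\top L)_{t,t})$. The lemma then gives $\mathbf O^\ast(L,M)=\mathbf O^\ast(L,L)$, and injectivity of each $L_{\cbullet,t}$ makes this set trivial. Conversely, \Cref{lem:O^*-facts}~(ii) turns $\mathbf O^\ast(L,M)=\{\mathrm{Id}\}$ into $\mathbf O^\ast(L,L)=\{\mathrm{Id}\}=\mathbf O^\ast(L,M)$. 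The lemma yields positive semi-definiteness and equal ranks, and a reflection in $\ker(L_{\cbullet,t})$ shows that kernel is trivial.

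You bypass $L^\top L$ entirely and work with the SVD parametrization of \Cref{lem:ABW-optimizers}~(iii), which is the mechanism inside the proof of \Cref{lem:O.equal.O.2}. Full rank makes $\mathbf Q(d)$ trivial, and a rank deficiency produces the explicit non-identity optimizer $V_t\,\mathrm{diag}(\mathrm{Id},-\mathrm{Id})\,V_t^\top$. Your observation that \Cref{prp:optimizer-characterization} holds for every SVD with ordered singular values is what makes this legitimate. In the forward direction you could simply take the spectral decomposition $U_t=V_t$ and read off $P_t=\mathrm{Id}$.

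What each route buys: the paper's presents the corollary as an instance of the structural identity $\mathbf O^\ast(L,M)=\mathbf O^\ast(L,L)$ used for geodesics. Yours is self-contained and, applied with $M=L$, directly gives $\mathbf O^\ast(L,L)=\{\mathrm{Id}\}$ for $L\in\mathbf L^{\rm reg}$, which is exactly how the corollary is used later.

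For the same reason, your closing ``alternative route'' is circular as stated. The equivalence ``$\mathbf O^\ast(L,L)=\{\mathrm{Id}\}$ iff all $(L^\top L)_{t,t}$ are positive definite'' is the case $M=L$ of the statement itself. It needs its own proof, e.g.\ via $LO=L$ and injectivity of $L_{\cbullet,t}$.
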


\subsection{Geodesics and displacement interpolations}
For $L\in{\bf L}$, denote by
\[\Gamma ([L]) \coloneq 
    \left\{
    \gamma :[0,1] \rightarrow \mathbf L /_{\mathbf O}  : \gamma \text{ is a constant speed geodesic with } \gamma(0) = [L]
    \right\}
\]
the set of all constant speed geodesics emerging from $[L]$.
Thus, for every  $\gamma \in \Gamma ([L])$, 
there exist matrices $L(u) \in {\bf L}$ such that $\gamma(u) = [L(u)]$ for all $u \in [0,1]$
and $[L(0)] = [L]$. 
One of the essential steps in this paper is to show that, in fact, there is a one-to-one correspondence between constant speed geodesics and displacement interpolations built with elements in $\mathbf V(L)$ -- that is, there exists an element $V \in \mathbf V(L)$ such that
$[L(u)] = [L+uV]$ for every $u \in [0,1]$. 

We start by observing that every such displacement interpolation is in fact a constant speed geodesic.

\begin{lemma}
    \label{lem:geodesics.one.side}
Let $V\in {\bf V}(L)$ and set $\gamma(u):=[L+uV]$ for $u\in[0,1]$.
Then $\gamma \in \Gamma([L])$.
\end{lemma}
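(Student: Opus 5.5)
We need to show that $d_{\rm ABW}(L+uV, L+vV) = |u-v|\,\|V\|_{\rm F}$ for all $u,v\in[0,1]$, where $\|V\|_{\rm F} = d_{\rm ABW}(L,L+V)$. Clearly $\gamma(0)=[L]$.

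We first record the endpoint identity. Write $V = MP-L$ with $P\in{\bf O}^\ast(L,M)$. Then $L+V = MP$, so $[L+V]=[M]$, and
\[
d_{\rm ABW}(L,L+V) = d_{\rm ABW}(L,M) = \|L-MP\|_{\rm F} = \|V\|_{\rm F}.
\]
Hence the target speed is $\|V\|_{\rm F}$.

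For the upper bound we use ${\rm Id}\in{\bf O}$ in the representation \eqref{eq:rep.d.prok}. This gives
\[
d_{\rm ABW}(L+uV,L+vV)\le \|(L+uV)-(L+vV)\|_{\rm F} = |u-v|\,\|V\|_{\rm F}
\]
for all $u,v\in[0,1]$.

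For the lower bound we use the triangle-inequality trick from the proof of \Cref{lem:AW-geodesics}. For $0\le u\le v\le 1$,
\[
\|V\|_{\rm F} = d_{\rm ABW}(L,L+V) \le d_{\rm ABW}(L,L+uV) + d_{\rm ABW}(L+uV,L+vV) + d_{\rm ABW}(L+vV,L+V).
\]
The right-hand side is at most $u\|V\|_{\rm F} + (v-u)\|V\|_{\rm F} + (1-v)\|V\|_{\rm F} = \|V\|_{\rm F}$, using the upper bound three times. Therefore every inequality must be an equality, and in particular $d_{\rm ABW}(L+uV,L+vV) = (v-u)\|V\|_{\rm F}$.

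The quotient distance on ${\bf L}/_{\bf O}$ is by definition $d_{\rm ABW}$ on representatives, so $\gamma$ is a constant speed geodesic in ${\bf L}/_{\bf O}$ starting at $[L]$, i.e.\ $\gamma\in\Gamma([L])$.

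The only substantive input is the endpoint identity $d_{\rm ABW}(L,L+V)=\|V\|_{\rm F}$, which follows directly from the definition of ${\bf O}^\ast(L,M)$. Alternatively, \Cref{lem:distance.for.V(L)} gives $d_{\rm ABW}(L,L+uV) = u\|V\|_{\rm F}$ directly. Neither \Cref{prop:O.equal.O} nor any optimizer analysis at intermediate times is needed.
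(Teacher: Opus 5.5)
Your proof is correct and follows the paper's argument: an upper bound from taking $O={\rm Id}$, the endpoint identity $d_{\rm ABW}(L,L+V)=\|V\|_{\rm F}$, and a triangle-inequality squeeze that forces equality. The only difference is that the paper cites \Cref{lem:distance.for.V(L)} for the endpoint identity, while you rightly note it follows directly from $P\in\mathbf O^\ast(L,M)$ and $[MP]=[M]$, so \Cref{prop:O.equal.O} is not needed here.
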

The proof follows from similar arguments as the proof presented for \Cref{lem:AW-geodesics}, we provide it for completeness.
\begin{proof}
For every $u,v\in[0,1]$,
\begin{align*} 
d_{\rm ABW}(\gamma(u),\gamma(v) )
&=\inf_{O\in {\bf O}} \| (L+uV) - (L+vV)O\|_{\rm F} \\
&\leq |u-v|\|V\|_{\rm F} 
=|u-v|d_{\rm ABW}(\gamma(0),\gamma(1)),
\end{align*}
where the last equality is due to \Cref{lem:distance.for.V(L)}.
Therefore, by the triangle inequality, for every $u,v\in[0,1]$ with $u\leq v$, 
\begin{align*} 
d_{\rm ABW}(\gamma(0),\gamma(1) )
&\leq d_{\rm ABW}(\gamma(0),\gamma(u) ) + d_{\rm ABW}(\gamma(u),\gamma(v) ) + d_{\rm ABW}(\gamma(v),\gamma(1) ) \\
&\leq d_{\rm ABW}(\gamma(0),\gamma(1) )
\end{align*}
and thus $d_{\rm ABW}(\gamma(u),\gamma(v) )=|u-v| d_{\rm ABW}(\gamma(0),\gamma(1))$.
\end{proof}
However, the same geodesic $\gamma \in \Gamma([L])$ may arise from multiple choices of $V \in \mathbf V(L)$.

\begin{example}
Let $d = 1$, $T = 2$, consider 
$L = \begin{psmallmatrix} 1 & 0\\ 0 & 0 \end{psmallmatrix}$
and $M = -{\rm Id}$. Then
\[
\mathbf O^*(L,M) = \left \{\begin{pmatrix} -1 & 0 \\ 0 & \xi \end{pmatrix} : \xi=\pm 1 \right\} \]
and in particular
\begin{align*}
    \{ MP-L : P\in \mathbf O^*(L,M)\}
    &= \left\{  \begin{pmatrix} 0 & 0 \\ 0 & \xi \end{pmatrix} : \xi=\pm 1\right\}=:\{ V^+,V^-\}.
\end{align*}
Therefore, $L^\pm (u) := L + u V^{\pm} $  for $u\in[0,1]$ define displacement interpolations between $[L]$ and $[M]$ which satisfy $[L^+(u)]= [L^-(u)]$ for all $u\in[0,1]$.

Observe that for the matrices  $V^\pm$ given we have that  $V^+\sim_{{\bf O}^\ast(L,L)} V^-$, that is, $V^+=V^-O$ for some 
\[ O\in 
\mathbf O^*(L,L ) = \left \{\begin{pmatrix} 1 & 0 \\ 0 & \xi \end{pmatrix} : \xi=\pm 1 \right\} .\]
Recalling that 
\[ d_{{\bf O}^\ast(L,L)}(V,W) = \inf_{O \in {\bf O}^\ast(L,L)} \|V-WO\|_{\rm F},\]
it follows that  $d_{{\bf O}^\ast(L,L)}(V^+,V^-)=0$.
\end{example}

As the previous example suggests, in order to obtain a one-to-one correspondence between $\Gamma([L])$ and  ${\bf V}(L)$, 
one needs to factorize by ${\bf O}^\ast(L,L)$; the orthogonal transformations that leave $L$ invariant.

Taking the quotient by this action, that is, considering ${\bf V}(L)/_{{\bf O}^\ast(L,L)}$, indeed yields a set isomorphic to $\Gamma(L)$:

\begin{theorem}
\label{thm:identification-of-geodesics}
    For every $L \in \mathbf L$, the spaces $\Gamma ([L])$ and $ {\bf V} (L)/_{{\bf O}^\ast(L,L)}$ are isomorphic.
    In particular, every $\gamma\in\Gamma([L])$ can be represented as
    \[ \gamma(u) = [ L+uV], \qquad u\in[0,1]  \]
    for some $V\in{\bf V} (L)$. Moreover,  if  $L\in {\bf L}^{\rm reg}$, then $V$ is unique.
\end{theorem}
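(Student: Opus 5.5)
The plan is to consider the map
\[
\Phi\colon {\bf V}(L)/_{{\bf O}^\ast(L,L)}\to \Gamma([L]),\qquad [V]_{{\bf O}^\ast(L,L)}\mapsto \big(u\mapsto [L+uV]\big),
\]
and to show that it is well defined, injective and surjective. Lemma~\ref{lem:geodesics.one.side} already shows that $u\mapsto[L+uV]$ lies in $\Gamma([L])$ for every $V\in{\bf V}(L)$.

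\emph{Well-definedness.} Suppose $W=VO$ with $O\in{\bf O}^\ast(L,L)$. By \Cref{lem:O^*-facts}(i) we have $LO=L$. Hence $L+uW=(L+uV)O$, so $[L+uW]=[L+uV]$ for every $u$.

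\emph{Injectivity.} Let $V,W\in{\bf V}(L)$ with $[L+uV]=[L+uW]$ for all $u$. Fix some $u\in(0,1)$ and choose $O\in{\bf O}$ with $L+uW=(L+uV)O$. By \Cref{lem:distance.for.V(L)},
\[
\|L-(L+uV)O\|_{\rm F}=u\|W\|_{\rm F}=d_{\rm ABW}(L,L+uW)=d_{\rm ABW}(L,L+uV).
\]
Therefore $O\in{\bf O}^\ast(L,L+uV)$. Since $u<1$, \Cref{prop:O.equal.O} gives ${\bf O}^\ast(L,L+uV)={\bf O}^\ast(L,L)$. So $LO=L$, and then $uVO=uW$, i.e.\ $W=VO$. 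Thus $V$ and $W$ define the same class.

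\emph{Surjectivity; this is the main step.} Let $\gamma\in\Gamma([L])$ with $\gamma(1)=[M]$, and fix $v\in(0,1)$.
\begin{itemize}
\item[1.] Pick a representative $N$ of $\gamma(v)$ satisfying $\|L-N\|_{\rm F}=d_{\rm ABW}(L,N)=v\,d_{\rm ABW}(L,M)$. This is possible by replacing $N$ with $NP$ for some $P\in{\bf O}^\ast(L,N)$.
\item[2.] Pick $Q\in{\bf O}$ with $\|N-MQ\|_{\rm F}=(1-v)\,d_{\rm ABW}(L,M)$.
\item[3.] The triangle inequality in $(\bL,\|\cdot\|_{\rm F})$ gives $\|L-MQ\|_{\rm F}\le d_{\rm ABW}(L,M)$. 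Hence $Q\in{\bf O}^\ast(L,M)$, and equality holds in the triangle inequality.
\item[4.] Strict convexity of the Frobenius norm then forces $N=L+v(MQ-L)$. So $\gamma(v)=[L+vV_v]$ with $V_v:=MQ-L\in{\bf V}(L)$.
\end{itemize}

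It remains to show that a single $V$ works for all $u$ simultaneously. For $u<v$, the restriction of $\gamma$ to $[0,v]$, after rescaling time, is a geodesic from $[L]$ to $[L+vV_v]$. Applying the same argument to it shows that $\gamma(u)=[L+(u/v)(L+vV_v)Q'-(u/v)L]$ for some $Q'\in{\bf O}^\ast(L,L+vV_v)$. Again by \Cref{prop:O.equal.O}, $Q'\in{\bf O}^\ast(L,L)$, so $LQ'=L$ and $\gamma(u)=[(L+uV_v)Q']=[L+uV_v]$.

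Now take $v_n\uparrow1$ and write $V_{v_n}=MQ_n-L$. By compactness of ${\bf O}$, after passing to a subsequence $Q_n\to Q$. Since ${\bf O}^\ast(L,M)$ is closed (it is the set of minimizers of a continuous function), $Q\in{\bf O}^\ast(L,M)$. By the previous paragraph, $\gamma(u)=[L+uV_{v_n}]$ for every $u<v_n$. Letting $n\to\infty$ and using continuity of $[\cdot]$ (as $d_{\rm ABW}(A,B)\le\|A-B\|_{\rm F}$), we get $\gamma(u)=[L+u(MQ-L)]$ for all $u<1$. The case $u=1$ also holds, since $[L+(MQ-L)]=[MQ]=[M]=\gamma(1)$.

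\emph{Uniqueness in the regular case.} If $L\in{\bf L}^{\rm reg}$, then $(L^\top L)_{t,t}$ is positive definite for every $t$. \Cref{cor:O=Id} with $M=L$ gives ${\bf O}^\ast(L,L)=\{{\rm Id}\}$. Injectivity then says that $V$ is unique.

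The step I expect to be most delicate is the passage from a $v$-dependent $V_v$ to a single $V$ in the surjectivity argument. It relies on the equality ${\bf O}^\ast(L,L+vV_v)={\bf O}^\ast(L,L)$ from \Cref{prop:O.equal.O}, which is valid only for $v<1$, together with the compactness limit $v_n\uparrow1$.
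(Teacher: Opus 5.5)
Your proof is correct and uses the same two ingredients as the paper. First, equality in the Frobenius triangle inequality forces intermediate representatives to be convex combinations. Second, \Cref{prop:O.equal.O} at an interior time confines the residual orthogonal ambiguity to ${\bf O}^\ast(L,L)$.

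Where you differ is in how the pieces are glued. The paper fixes the anchor $\gamma(\tfrac12)$ and compares two Frobenius-straight chains, $L\to B\to C$ and $L\to A'\to B'\to C'$. From $B'=BP$ with $P\in{\bf O}^\ast(L,B)={\bf O}^\ast(L,L)$ it gets $C'=CP$, so one endpoint representative $C$ works for every $v$ and no limit is needed. You instead rerun the one-point construction on the rescaled restriction $\gamma|_{[0,v]}$ and then pass to $v\uparrow 1$ via compactness of ${\bf O}^\ast(L,M)$.

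That limit can be avoided within your own framework. For $v<v'<1$, apply your injectivity argument at a single time $u_0\in(0,v)$. This gives $V_{v'}=V_vO$ with $O\in{\bf O}^\ast(L,L)$, hence $[L+uV_{v'}]=[L+uV_v]$ for all $u$. The endpoint $u=1$ is automatic, since $L+V_v=MQ$ with $Q\in{\bf O}$ gives $[L+V_v]=[M]$.

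What your version gains is an explicit injectivity proof. The paper's first step only checks that $[V]_{{\bf O}^\ast(L,L)}\mapsto (u\mapsto[L+uV])$ is well defined, and leaves injectivity implicit in the gluing argument. Injectivity is exactly what turns ${\bf O}^\ast(L,L)=\{{\rm Id}\}$ into uniqueness of $V$ for $L\in{\bf L}^{\rm reg}$.
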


\begin{remark}
    It is straightforward to show that ${\mathbf O^*(L,L)}$ is a group (hence $d_{\mathbf O^*(L,L)}$ is a pseudo-metric) and thus the quotient space  ${\bf V} (L)/_{{\bf O}^\ast(L,L)}$  is well-defined.

    Moreover, $\mathbf V(L)$ is compatible with the factorization of ${\bf L}$ that we use here:    for every $L$ and $O \in \mathbf O$, 
     $\mathbf V(LO) = \mathbf V(L) \cdot O$.
 \end{remark}

In the proof of the above theorem, we will employ the following simple observation:
Let $A,B,C \in \mathbf L$ satisfying $\|A -C\|_{\rm F} = \|A - B\|_{\rm F} + \|B - C\|_{\rm F}$.
Since the Frobenius norm is strictly convex, this equality implies that $A - B$ and $B - C$ are both multiples of $A - C$ by a non-negative scalar.
It follows that if
\[
    u \|A - C\|_{\rm F} =  \|A - B\|_{\rm F} \text{ and } (1-u) \|A - C\|_{\rm F} = \|B - C\|_{\rm F}
\]
for some $u \in (0,1)$, then $B = (1-u) A + u C$.
\begin{proof}[Proof of \Cref{thm:identification-of-geodesics}]
    Let $V,W \in {\bf V}(L)$ such that $V=WO$ for some $O\in {\bf O}^\ast(L,L)$.
    Set $\gamma(u) :=  [L+uV]$ and $\eta(u) :=  [L+uW ]$ for $u\in[0,1]$, then $\gamma,\eta \in\Gamma([L])$ by Lemma~\ref{lem:geodesics.one.side}.
    Note that
    \begin{align*} 
    d_{\rm ABW}(\eta(u),\gamma(u))
    &=\inf_{P\in \bf O} \| (L+uW) - (L+uV)P\|_{\rm F} \\
    &\leq \| (L+uVO) - (L+uV)O\|_{\rm F}
    =0,
    \end{align*}
    where we used that $LO=L$ because $O\in {\bf O}^\ast(L,L)$.
    Thus $\gamma = \eta$ and therefore  ${\bf V}(L)/_{{\bf O}^\ast(L,L)}$ can be identified as a subset of $ \Gamma([L])$.

    Conversly, let $\gamma\in\Gamma([L])$ and fix $v \in [0,\frac{1}{2}]$. 
    Since $\gamma$ is a geodesic, one can verify that there are $A' \in \gamma(v)$, $B,B' \in \gamma(\tfrac{1}{2})$, and $C,C' \in \gamma(1)$ with the property that
    \begin{align}
        \label{eq:frobenious.strict.convec}
        \begin{split}
            d_{\rm ABW}(\gamma(0),\gamma(1)) &= \|L - C\|_{\rm F} = \|L - B\|_{\rm F} + \|B -C\|_{\rm F},
        \\
        &= \|L - C'\|_{\rm F} = \|L - A'\|_{\rm F} + \|A' -B'\|_{\rm F} + \|B' - C'\|_{\rm F}.
        \end{split}
    \end{align}
     (Indeed, pick any $\tilde C \in \gamma(1)$ and put $C=\tilde C P  \in \gamma(1)$ for some $P\in {\bf O}^\ast(L, \tilde C)$. 
     Then continue,  constructing $B$ using this $C$, etc.) 
    From the first line in \eqref{eq:frobenious.strict.convec}, we derive $B = \frac{1}{2}(L+C)$.
    Similarly, the second line yields
    \[
       \|L - B'\|_{\rm F} + \|B'-C'\|_{\rm F} = \|L - C'\|_{\rm F} = \|L - A'\|_{\rm F} + \|A' - C'||_{\rm F},
    \]
    from where we derive $A' = (1-v)L + vC'$ and $B' = \frac{1}{2}(L+C')$.
    Furthermore, because
    \[
        \|L - B\|_{\rm F} = d_{\rm ABW}\left(\gamma(0),\gamma\left(\tfrac{1}{2}\right)\right) = \|L - B'\|_{\rm F},
    \]
    there exists $P \in \mathbf O^\ast(L,B)$ with $BP = B'$.
    As $C-L \in \mathbf V(L)$,
    we find by \Cref{prop:O.equal.O} that $\mathbf O^\ast(L,L) = \mathbf O^\ast(L,L + \frac12 (C-L)) = \mathbf O^\ast(L,B)$,
    and deduce that
    \[
       \frac{1}{2}(L+C') = B' = BP = \frac{1}{2}(L+CP),
    \]
    that is, $C' = CP$.
    In particular, 
    \[
        A' = (1 - v)L + vC' = ( (1-v)L + vC)P,
    \]
    which yields that $\gamma(v) = [(1-v)L + vC]$.
    By analogous reasoning for $v \in (\frac{1}{2},1)$, we find that $\gamma(v) = [(1-v)L + vC]$ for any $v \in [0,1]$.
    Hence we can identify $\Gamma([L])$ as subset of ${\bf V}(L)$ and thus with a subset of ${\bf V}(L)/_{{\bf O}^\ast(L,L)}$ (by the same reasoning as in the first step).
    
    The last statement follows by \Cref{cor:O=Id}: ${\bf O}^\ast(L,L)=\{ {\rm Id}\}$ if $L$ is regular.
\end{proof}

\begin{corollary}
\label{cor:unique.geo}
    For $L,M\in {\bf L}$, set 
    \[ \Gamma([L],[M]):= \{\gamma\in\Gamma([L]) : \gamma(1) = [M] \}.\]
    Then $\Gamma([L],[M])$ is isomorphic  to $\{ MP- L : P\in {\bf O}^\ast(L,M)\}/_{{\bf O}^\ast(L,L)}$ and, in particular, the following are equivalent:
  \begin{enumerate}[(i)]
        \item $|\Gamma([L],[M])|=1$;
        \item $\{ MP- L : P\in {\bf O}^\ast(L,M)\}/_{{\bf O}^\ast(L,L)}$ is a singleton.
    \end{enumerate}
\end{corollary}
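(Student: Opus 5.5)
The plan is to deduce the statement from \Cref{thm:identification-of-geodesics}, restricting that identification to geodesics ending at $[M]$. The equivalence (i)$\Leftrightarrow$(ii) is then immediate once the isomorphism is established, since isomorphic sets have equal cardinality.

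First, every element of $\{MP-L : P\in{\bf O}^\ast(L,M)\}$ belongs to ${\bf V}(L)$, directly from the definition \eqref{eq:V-def}. For such $V=MP-L$, the curve $\gamma(u)=[L+uV]$ lies in $\Gamma([L])$ by \Cref{lem:geodesics.one.side}. Its endpoint is $\gamma(1)=[MP]=[M]$, so $\gamma\in\Gamma([L],[M])$. The first part of the proof of \Cref{thm:identification-of-geodesics} shows that ${\bf O}^\ast(L,L)$-equivalent elements produce the same geodesic. Hence we get a well-defined map
\[\Phi\colon \{MP-L : P\in{\bf O}^\ast(L,M)\}/_{{\bf O}^\ast(L,L)}\to\Gamma([L],[M]).\]

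Injectivity of $\Phi$ is inherited from the injectivity of the identification ${\bf V}(L)/_{{\bf O}^\ast(L,L)}\cong\Gamma([L])$ in \Cref{thm:identification-of-geodesics}. If one prefers not to rely on how bijectivity is phrased there, the direct argument is as follows. Suppose $[L+uV]=[L+uW]$ for all $u$. Then $L+V=(L+W)O$ for some $O\in{\bf O}$. I would show that $O$ can be taken in ${\bf O}^\ast(L,L)$ by evaluating at small $u$. Specifically, $L+uV=(L+uW)O_u$, and \Cref{prop:O.equal.O} gives ${\bf O}^\ast(L,L+uW)={\bf O}^\ast(L,L)$ for $u<1$. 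Combined with $d_{\rm ABW}(L,L+uV)=u\|V\|_{\rm F}$, this forces $O_u\in{\bf O}^\ast(L,L)$. It then follows that $LO_u=L$ and $V=WO_u$.

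Surjectivity is the step that needs care. Take $\gamma\in\Gamma([L],[M])$. By \Cref{thm:identification-of-geodesics}, $\gamma(u)=[L+uV]$ with $V=C-L\in{\bf V}(L)$, and from its proof $C\in\gamma(1)=[M]$ satisfies $\|L-C\|_{\rm F}=d_{\rm ABW}(L,M)$. Write $C=MP$ with $P\in{\bf O}$. Then $\|L-MP\|_{\rm F}=d_{\rm ABW}(L,M)$, so $P\in{\bf O}^\ast(L,M)$ and $V=MP-L$ lies in the required set. The main obstacle is exactly this point: making sure the representative $V$ supplied by the theorem can be written with $M$ itself rather than with an arbitrary element of $[M]$. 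That is resolved by the computation above, since $[M]=\{MO: O\in{\bf O}\}$ and optimality of $C$ translates into optimality of $P$.
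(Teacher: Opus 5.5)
Your proposal is correct and follows the paper's proof. The forward direction goes through \Cref{lem:geodesics.one.side}, and the reverse direction writes $\gamma(1)=[L+V]=[M]$, so that $L+V=MP$ with $P\in\mathbf O^\ast(L,M)$ because $\|V\|_{\rm F}=d_{\rm ABW}(L,M)$; the paper takes this equality from \Cref{lem:distance.for.V(L)} rather than from inside the proof of \Cref{thm:identification-of-geodesics}. Your explicit injectivity argument via \Cref{prop:O.equal.O} at small $u$ is also valid, and it spells out a step the paper leaves implicit.
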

\begin{proof}
    Let $V\in \{ MP- L : P\in {\bf O}^\ast(L,M)\}$ and set  $\gamma(u):=[L+uV]$ for $u\in[0,1]$.
    Then $\gamma \in \Gamma([L])$ by \Cref{lem:geodesics.one.side} and it is obvious that $\gamma(1)=[M]$, thus $\gamma\in\Gamma([L],[M])$.

    As for the other direction, let $\gamma\in \Gamma([L],[M])$, thus $\gamma(u) = [L+uV]$ for $u\in[0,1]$ and some $V\in {\bf V}(L)$ by \Cref{thm:identification-of-geodesics}, that is, $V=NP-L$ for some $N\in{\bf L}$ and $P\in{\bf O}^\ast(L,N)$.
    Since $\gamma(1)=[M]$ we have $[NP]=[M]$ and therefore $NP=MO$ for some $O\in {\bf O}$. 
    Moreover, using that 
    \[ d_{\rm ABW}([L],[M]) = \|V\|_{\rm F} = \|L-MO\|_{\rm F},\]
    we conclude that $O\in{\bf O}^\ast(L,M)$; thus $V\in \{ M \tilde P- L : \tilde P\in {\bf O}^\ast(L,M)\}$.

    Invariance under ${\bf O}^\ast(L,L)$ follows from \Cref{lem:O^*-facts}.
\end{proof}

\subsection{Alexandrov geometry}
\label{sec:alex.background}

\begin{proof}[Proof of  \Cref{thm:ABW.alex.intro}]
    It follows from \Cref{lem:geodesics.one.side} that $({\bf L}/_{{\bf O}},d_{\rm ABW})$     is a geodesic space and it is obvious that it is complete.

    To show that it is an Alexandrov space with non-negative curvature, it suffices to show the semi-concavity inequality of $d_{\rm ABW}$ along constant speed geodesics, see \eqref{eq:semi.concavity}.
    To this end, let $\gamma$ be a constant speed geodesic starting in $[L]$ represented by $[L + uV] = \gamma(u)$, $u \in [0,1]$, for some $V \in \mathbf V(L)$, see  \Cref{thm:identification-of-geodesics}.
    We have that, for any $M \in \mathbf L$ and $u \in [0,1]$,
    \begin{align*}
        \sup_{O \in \mathbf O} \langle L + u V,MO \rangle_{\rm F} \le (1-u) \sup_{O \in \mathbf O} \langle L,M O\rangle_{\rm F} + u \sup_{O \in \mathbf O} \langle L + V,M O\rangle_{\rm F}.
    \end{align*}
   Next, since $d_{\rm ABW}([L],[L+V])=\|V\|_{\rm F}$ by \Cref{lem:distance.for.V(L)}, a straightforward computation of completing squares shows that
    \begin{multline*}
        d_{\rm ABW}^2([L+uV],[M]) \\ \ge (1-u) d_{\rm ABW}^2([L],[M]) + u d_{\rm ABW}^2([L+V],[M]) - u(1-u) d_{\rm ABW}^2([L],[L+V]),
    \end{multline*}
    which is the desired inequality. 
\end{proof}

We briefly recall the general scheme to define Riemannian-like notions in 
Alexandrov spaces with curvature bounded from below, which will be used in what follows.
For a more in-depth treatment, see, e.g., \cite{AlKaPe24,BuGrPe92}
and also \cite{AmGiSa08} for these concepts applied in the setting of classical optimal transport.

Recall that   $\Gamma([L])$ is the space of all constant speed geodesics emerging from $[L]\in {\bf L}/_{{ \bf O}}$.
For $L\in\bf L$,  $\gamma,\eta\in \Gamma([L])\setminus\{0\}$ (where $0$ refers to the geodesic with zero speed, i.e., $\gamma=0$ if $\gamma(u) = [L]$ for all $u\in[0,1]$), and $u,w\in(0,1]$, we set
  \[   \alpha([L], \gamma(u), \eta(w))
    \coloneqq \frac{d^2_{\rm ABW}([L],\gamma(u)) + d^2_{\rm ABW}([L],\eta(w)) - d^2_{{\rm ABW}}(\gamma(u),\eta(w))}{2d_{{\rm ABW}}([L],\gamma(u))d_{{\rm ABW}}([L],\eta(w))}.\]
Define the angle via 
\[
    \cos(\angle (\gamma, \eta))
     \coloneqq 
   \lim_{u \downarrow 0} \alpha([L], \gamma(u), \eta(u)).
    \]
    Note that $\angle = \angle_{[L]}$ depends on $[L]$, though we follow the standard convention and suppress that dependence. 
	
	Since $(\Gamma([L]), d_{\rm ABW})$ has non-negative curvature, it follows that 
	\[(u,w)\mapsto \alpha([L], \gamma(u), \eta(w))\]
	is nondecreasing, see, e.g., Lemma 12.3.4 in \cite{AmGiSa08}.
	In particular, the limit in the definition of $\cos(\angle (\gamma, \eta))$ exists and
	\[
    \cos(\angle (\gamma, \eta))
    =  \lim_{w \downarrow 0} \lim_{u \downarrow 0} \alpha([L], \gamma(u), \eta(w)).
    \]
    This fact will be used later.

    Next, one can show that $ \angle (\,\cdot,\,\cdot\,)$ induces a pseudometric on $\Gamma([L])\setminus\{0\}$, see e.g.\ \cite[Section 6.5]{AlKaPe24}.
    We denote by $\overline{\Gamma}([L])$ the completion of the quotient space $(\Gamma([L])\setminus\{0\})/_{ \angle}$ with respect to $ \angle (\,\cdot,\,\cdot\,)$.
    In the present framework, it will turn out that $(\Gamma([L])\setminus\{0\})/_{ \angle}$ is already complete, which simplifies the matter.
    Finally, the tangent cone  at $[L] \in \mathbf L /_{\mathbf O}$ denoted by $\mathbf{T}([L])$ is then obtained by 
    taking the metric cone over the completed space $\overline{\Gamma}([L])$, namely 
    \[
\mathbf{T}([L])
\coloneq \operatorname{cone} (\overline{\Gamma}([L])
\coloneq \big(\overline{\Gamma}([L])\times[0,\infty)\big)/_{  {\rm tip}}
\]
 where $(\gamma,r) \sim_{\rm tip} (\eta,s)$ if and only if $r=s=0$, that is, all points $\gamma,\eta\in \overline{\Gamma}([L])$ with attached radius $0$ are identified as the same point.
The cone is equipped with the metric $d_{\mathbf{T}([L])}$, where
\[
d_{\mathbf{T}([L])}^2 \big((\gamma,r),(\eta, s)\big)
:= r^2+s^2-2rs\cos(\angle(\gamma,\eta)).
\]

If $(\Gamma([L])\setminus\{0\})/_{ \angle}$ is already complete, the exponential map $\exp_{[L]}$ at $[L] \in {\bf L}/_{\bf O}$ sends a vector $([\gamma]_{\angle},r)\in \mathbf{T}([L])$ to the endpoint of a constant speed geodesic $\eta$ 
satisfying $d_{\rm ABW}([L],\eta(1))=r$ and $\angle(\gamma,\eta)=0$  -- if such a curve $\eta$ exists.
Note that $\eta$ always exists if $r\leq d_{\rm ABW}([L],\gamma(1))$, but may fail to do so otherwise.

If a point $[M]\in {\bf L}/_{\bf O}$ is reached from  $[L]$ by a unique constant-speed geodesic  $\gamma$, then the log map of $[M]$ at $[L]$  is well defined and given by
\begin{align*}
    \log_{[L]}([M]) = \big([\gamma]_{\angle},\, d_{\mathrm{ABW}}([L],[M])\big) \in {\bf T}([L]).
\end{align*}
In that case,  $\exp_{[L]}(\log_{[L]} ([M] )) = [M] $.

\begin{remark}
It seems plausible that parts of the analysis in this section could also be approached from a Lie-group–theoretic perspective, viewing the action of $\mathbf{O}$ on $\mathbf{L}$ and the associated quotient structure ${\bf L}/_{\bf O}$.
While such an approach might in principle lead to comparable results, obtaining explicit geometric expressions would likely still involve an in-depth treatment. 
In this work, we instead follow an Alexandrov-space viewpoint, which appears to be more natural in the setting of (adapted) optimal transport; in particular, we expect it to be more flexible and better suited for the generalizations beyond the Gaussian setting.
\end{remark}

\subsection{The angle}
The geometry of the tangent cone $\mathbf T([L])$ is governed by the associated distance $d_{\mathbf T([L])}$, 
which in turn depends on computing angles between geodesics.

\begin{theorem}
\label{thm:angle}
	   Fix $L\in \bf L$ and let $\gamma$, $\eta$ be two geodesics in ${\bf L}/_{ {\bf O}}$ emerging from $[L]$ which are represented by $V, W\in {\bf V}(L)\setminus\{0\}$, that is, $\gamma(u) = [ L + uV]$ and  $\eta(u) = [ L + uW]$ for all $u\in[0,1]$.
	Then 
	\[  \cos(\angle(\gamma,\eta)) 
	= \sup_{O \in \mathbf{O}^\ast(L,L)} \frac{ \langle V, W O \rangle_{\rm F} }{ \|V\|_{\rm F}   \| W\|_{\rm F} } .
\] 
\end{theorem}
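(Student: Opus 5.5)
Since $d_{\rm ABW}([L],\gamma(u)) = u\|V\|_{\rm F}$ and $d_{\rm ABW}([L],\eta(u)) = u\|W\|_{\rm F}$ by \Cref{lem:distance.for.V(L)}, the whole computation reduces to the asymptotics of $d^2_{\rm ABW}(L+uV, L+uW)$ as $u\downarrow 0$. Expanding the Procrustes representation \eqref{eq:rep.d.prok},
\[
d^2_{\rm ABW}(L+uV,L+uW) = \inf_{O\in\bO}\|L-LO + u(V-WO)\|_{\rm F}^2 .
\]
Plugging this into $\alpha([L],\gamma(u),\eta(u))$, the claim becomes
\[
\lim_{u\downarrow0}\frac{1}{u^2}\,d^2_{\rm ABW}(L+uV,L+uW)=\|V\|_{\rm F}^2+\|W\|_{\rm F}^2-2\sup_{O\in\bO^\ast(L,L)}\langle V,WO\rangle_{\rm F}
=\inf_{O\in\bO^\ast(L,L)}\|V-WO\|_{\rm F}^2 .
\]

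\emph{Upper bound.} Restricting the infimum to $O\in\bO^\ast(L,L)$, where $LO=L$ by \Cref{lem:O^*-facts}(i), gives $d^2_{\rm ABW}(L+uV,L+uW)\le u^2\inf_{O\in\bO^\ast(L,L)}\|V-WO\|_{\rm F}^2$. This yields $\cos\angle\ge\sup_{O\in\bO^\ast(L,L)}\langle V,WO\rangle_{\rm F}/(\|V\|_{\rm F}\|W\|_{\rm F})$.

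\emph{Lower bound.} For each $u$, pick an optimizer $O_u\in\bO^\ast(L+uV,L+uW)$, which exists by compactness of $\bO$. By the upper bound,
\[
\|L-LO_u+u(V-WO_u)\|_{\rm F}\le Cu ,
\]
so $\|L-LO_u\|_{\rm F}=O(u)$. Any accumulation point of $O_u$ therefore fixes $L$, i.e.\ lies in $\bO^\ast(L,L)$. Expanding the square,
\[
\|L-LO_u+u(V-WO_u)\|_{\rm F}^2=\|L-LO_u\|_{\rm F}^2+2u\langle L-LO_u,V-WO_u\rangle_{\rm F}+u^2\|V-WO_u\|_{\rm F}^2 .
\]
The key step is to show that the cross term is asymptotically nonnegative up to $o(u^2)$, or that it is dominated by the first term.

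Here I would use the structure of $\bV(L)$. By \Cref{lem:O.equal.O.1}, $((L+V)^\top L)_{t,t}$, and hence $(V^\top L)_{t,t}+(L^\top L)_{t,t}$, is positive semi-definite. The same holds for $W$. Since everything decouples blockwise over $t$ (because $\bO$ is block-diagonal), I would write $\langle L-LO,V\rangle_{\rm F}=\sum_t\tr\big((V^\top L)_{t,t}({\rm Id}-O_t)\big)$, and similarly $\langle L-LO,WO\rangle_{\rm F}=\langle LO^\top-L,W\rangle_{\rm F}$. With $A_t:=(L^\top L)_{t,t}$ and $\|L-LO\|_{\rm F}^2=2\sum_t\tr(A_t({\rm Id}-O_t))$, everything reduces to a per-block inequality of the form
\[
\tr\big(A_t({\rm Id}-O_t)\big)+u\,\tr\big(B_t({\rm Id}-O_t)\big)+u\,\tr\big(C_t({\rm Id}-O_t^\top)\big)\ \ge\ -o(u^2),
\]
where $B_t=(V^\top L)_{t,t}$ and $C_t=(W^\top L)_{t,t}$, with $A_t+B_t\succeq0$ and $A_t+C_t\succeq0$, and with $\ker A_t\subseteq\ker B_t\cap\ker C_t$ (the kernel inclusion is as in the proof of \Cref{lem:O.equal.O.1}).

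I would decompose $O_t$ relative to $\operatorname{img}A_t\oplus\ker A_t$. Components acting only on $\ker A_t$ contribute zero to every term. The remaining part satisfies
\[
\tr\big(A_t({\rm Id}-O_t)\big)\ge c\,\|P_{\operatorname{img}}({\rm Id}-O_t)\|^2 ,
\]
and since $A_t+B_t,\,A_t+C_t\succeq0$ and $\tr(S({\rm Id}-O))\ge0$ for $S\succeq0$, the term $(1-2u)\tr(A_t({\rm Id}-O_t))+u\,\tr((A_t+B_t)({\rm Id}-O_t))+u\,\tr((A_t+C_t)({\rm Id}-O_t^\top))$ is nonnegative. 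Here I use $\tr(A({\rm Id}-O^\top))=\tr(A({\rm Id}-O))$ for symmetric $A$. So the cross term plus the first term is nonnegative for $u\le1/2$.

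This gives the lower bound
\[
d^2_{\rm ABW}(L+uV,L+uW)\ge u^2\|V-WO_u\|_{\rm F}^2\ge u^2\inf_{O\in\bO^\ast(L,L)}\|V-WO\|_{\rm F}^2-o(u^2),
\]
where the last step uses continuity and the fact that the accumulation points of $O_u$ lie in $\bO^\ast(L,L)$. Dividing by $u^2$ and taking the limit gives equality. I expect this per-block sign argument, i.e.\ correctly grouping the cross term using positive semi-definiteness of $((L+V)^\top L)_{t,t}$, to be the main obstacle.
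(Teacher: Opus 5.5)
Your proof is correct, and for the inequality $\cos(\angle(\gamma,\eta))\le\sup_{O\in\bO^\ast(L,L)}\langle V,WO\rangle_{\rm F}/(\|V\|_{\rm F}\|W\|_{\rm F})$ it takes a genuinely different route from the paper. Your upper bound on $d_{\rm ABW}$ is exactly the paper's Step~1.

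\emph{The paper's route.} It decouples the two parameters. It uses the monotonicity of the comparison quantity $\alpha([L],\gamma(v),\eta(w))$, which comes from non-negative curvature, to compute the angle as an iterated limit, and lets $v\to0$ with $w$ fixed. It then discards a term using only ${\rm Id}\in\bO^\ast(L,L+wW)$, and handles the remaining $V$-terms by continuity. Finally it identifies accumulation points of the optimizers $O^{v,w}$ through \Cref{lem:acc.point}. That step needs $\bO^\ast(L,L+wW)=\bO^\ast(L,L)$ for $w\in(0,1)$ from \Cref{prop:O.equal.O}, which rests on the rank statement of \Cref{lem:O.equal.O.1} and on \Cref{lem:O.equal.O.2}.

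\emph{Your route.} You stay on the diagonal $v=w=u$, which is literally the definition of the angle. Your regrouping into
$(1-2u)\tr(A_t({\rm Id}-O_t))+u\,\tr((A_t+B_t)({\rm Id}-O_t))+u\,\tr((A_t+C_t)({\rm Id}-O_t^\top))$
gives the exact inequality $\|(L+uV)-(L+uW)O\|_{\rm F}^2\ge u^2\|V-WO\|_{\rm F}^2$ for every $O\in\bO$ and every $u\le\frac12$. It uses only that $((L+V)^\top L)_{t,t}$ and $((L+W)^\top L)_{t,t}$ are positive semi-definite, plus $\tr(S({\rm Id}-O))\ge0$ for $S\succeq0$ and orthogonal $O$. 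Since diagonal optimizers satisfy $LO_u\to L$, their accumulation points lie in $\bO^\ast(L,L)$ by \Cref{lem:O^*-facts}(i) alone. So you never need the identity $\bO^\ast(L,L+wW)=\bO^\ast(L,L)$ or any curvature monotonicity.

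\emph{What each buys.} You get existence of the limit directly, together with the self-contained, non-asymptotic two-sided estimate
\[
u^2\inf_{O\in\bO}\|V-WO\|_{\rm F}^2\le d_{\rm ABW}^2(L+uV,L+uW)\le u^2\inf_{O\in\bO^\ast(L,L)}\|V-WO\|_{\rm F}^2,\qquad u\in(0,\tfrac12].
\]
The paper's argument, in exchange, needs only optimality of ${\rm Id}$ for the single pair $(L,L+wW)$ rather than a uniform inequality over all of $\bO$, and it sits naturally inside the Alexandrov-space machinery the paper develops.

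Two parts of your write-up are unnecessary. The kernel decomposition of $O_t$ and the bound $\tr(A_t({\rm Id}-O_t))\ge c\|P_{\operatorname{img}}({\rm Id}-O_t)\|^2$ are never used. Also, because the regrouped expression is exactly nonnegative, there is no $o(u^2)$ error in that step: the step you flag as the main obstacle is already settled by your own regrouping.
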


The proof of \Cref{thm:angle} requires the following observation.

\begin{lemma}
\label{lem:acc.point}
	Let $L\in {\bf L}$, $V,W\in {\bf V}(L)$, and  $w\in [0,1)$.
    Let $(u_n)_{n\geq 1}$ be a sequence converging to 0 and, for each $n\geq1$, let $O^n\in  {\bf O}^\ast(L+u_nV,L+wW)$.
    Then, every $\|\cdot\|_{\rm F}$-accumulation point of $(O^n)_{n\geq 1}$ lies in ${\bf O}^\ast(L, L)$.
\end{lemma}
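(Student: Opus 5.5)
The plan is to show that every accumulation point is optimal for the limiting pair $(L,L+wW)$, and then to use \Cref{prop:O.equal.O} to identify ${\bf O}^\ast(L,L+wW)$ with ${\bf O}^\ast(L,L)$. Throughout, write $A_n:=L+u_nV$ and $B:=L+wW$. Fix an accumulation point $O$ of $(O^n)_{n\ge1}$ and a subsequence $(n_k)_{k\ge1}$ with $O^{n_k}\to O$ in $\|\cdot\|_{\rm F}$. Since ${\bf O}$ is closed (indeed compact), we have $O\in{\bf O}$.

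First I record that $d_{\rm ABW}$ is $1$-Lipschitz in each argument with respect to $\|\cdot\|_{\rm F}$. This follows from the Procrustes representation \eqref{eq:rep.d.prok}. For any $A,A'\in{\bf L}$ and $P\in{\bf O}$ we have $\|A-BP\|_{\rm F}\le \|A-A'\|_{\rm F}+\|A'-BP\|_{\rm F}$, and taking the infimum over $P$ gives $d_{\rm ABW}(A,B)\le \|A-A'\|_{\rm F}+d_{\rm ABW}(A',B)$. Exchanging $A$ and $A'$ gives the reverse bound. Since $A_n\to L$ as $u_n\to0$, it follows that $d_{\rm ABW}(A_n,B)\to d_{\rm ABW}(L,B)$.

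Next I pass to the limit in the optimality relation. By the choice of $O^{n_k}$ we have $\|A_{n_k}-BO^{n_k}\|_{\rm F}=d_{\rm ABW}(A_{n_k},B)$. The left-hand side converges to $\|L-BO\|_{\rm F}$ by joint continuity of $(A,P)\mapsto\|A-BP\|_{\rm F}$. The right-hand side converges to $d_{\rm ABW}(L,B)$ by the previous step. Hence $\|L-BO\|_{\rm F}=d_{\rm ABW}(L,L+wW)$, that is, $O\in{\bf O}^\ast(L,L+wW)$.

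Finally, since $W\in{\bf V}(L)$ and $w\in[0,1)$, \Cref{prop:O.equal.O} gives ${\bf O}^\ast(L,L+wW)={\bf O}^\ast(L,L)$, so $O\in{\bf O}^\ast(L,L)$. The only substantive input is this last step, and it is exactly where the hypothesis $w<1$ is needed. At $w=1$ the proposition only yields the inclusion ${\bf O}^\ast(L,L)\subseteq{\bf O}^\ast(L,L+W)$, which points the wrong way for this argument. All other steps are routine continuity and compactness arguments, so I do not expect any of them to be a real obstacle.
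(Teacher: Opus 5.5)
Your proposal is correct and follows the paper's proof essentially verbatim. You pass to the limit in the optimality identity along a convergent subsequence to get $O\in{\bf O}^\ast(L,L+wW)$, and then invoke \Cref{prop:O.equal.O} using $w<1$; your explicit $1$-Lipschitz argument for $d_{\rm ABW}(L+u_nV,L+wW)\to d_{\rm ABW}(L,L+wW)$ fills in a continuity step the paper uses without comment.
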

\begin{proof}
    Let $O$ be such an accumulation point; we assume without loss of generality that $O ^n\to O$.
    Set $M:=L+wW$.
    Since 
    \begin{align*}
     \|L- MO \|_{{\rm F}}
    &=\lim_{n\to\infty} \|L+u_nV- MO^{n} \|_{{\rm F}} \\
    &=\lim_{n\to\infty}  d_{\rm ABW}(L+u_n V, M)
    =d_{\rm ABW}(L, M), 
    \end{align*}
    it follows that $O\in {\bf O}^\ast(L,M)$ and, by \Cref{prop:O.equal.O}, ${\bf O}^\ast(L,M)={\bf O}^\ast(L,L)$.
\end{proof}

\begin{proof}[Proof of \Cref{thm:angle}]
	For $v,w\in (0,1]$, set 
	\[ \mathcal{E}(v,w) := \frac{d_{\rm ABW}^2([L],\gamma(v)) + d_{\rm ABW}^2([L],\eta(w))) - d_{\rm ABW}^2(\gamma(v), \eta(w))}{2d_{\rm ABW}([L],\gamma(v))d_{\rm ABW}([L],\eta(w))},
  \]
  and recall that, by \cite[Lemma 12.3.4]{AmGiSa08}, for $v',w' \in (0,1]$ with $v' \le v$ and $w'\le w$ we have $\mathcal E(v',w') \le \mathcal E(v,w)$.
  Thus, $ \cos(\angle(\gamma,\eta))  = \lim_{v,w\to 0}  \mathcal{E}(v,w)$ and the order of the limits does not matter.
  Observe that, by  \Cref{lem:distance.for.V(L)}, $d_{\rm ABW}([L],\gamma(v)) = v \|V\|_{\rm F}$ and $d_{\rm ABW}([L],\eta(w))=  w\| W\|_{\rm F}$ for all $v,w$, hence
  \[ \mathcal{E}(v,w) = \frac{ v^2 \|V\|_{\rm F}^2+ w^2 \|W\|_{\rm F}^2 - d_{\rm ABW}^2(\gamma(v),\eta(w)))}{2vw\|V\|_{\rm F} \|W\|_{\rm F}}.
  \]

	\noindent
	\emph{Step 1:} We first claim that 
	\begin{align}
	\label{eq:angle.geq}
	\cos(\angle(\gamma,\eta)) 
	\geq  \sup_{O \in \mathbf{O}^\ast(L,L)} \frac{ \langle V,  W O \rangle_{\rm F} }{ \|V\|_{\rm F}   \|W \|_{\rm F} }.
	\end{align}
	To that end, note that, by the definition of $d_{\rm ABW}$,
	\begin{align*} 
	d_{\rm ABW}(\gamma(v),\eta(w))
	\leq \inf_{O\in {\bf O}^\ast(L,L) }  \|L + vV - (L+ w W)O\|_{\rm F}.
	\end{align*}
	Since $L=LO$ for every $O\in {\bf O}^\ast(L,L)$, 
		\begin{align*} 
	d_{\rm ABW}^2(\gamma(v),\eta(w))
	&\leq \inf_{O\in {\bf O}^\ast(L,L) }  \| vV - w WO\|_{\rm F}^2 \\
	&=   v^2\| V\|_{\rm F}^2 + w^2 \| W \|_{\rm F}^2  - 2 vw  \sup_{O\in {\bf O}^\ast(L,L) } \langle V,WO\rangle_{\rm F},
	\end{align*}
	from which \eqref{eq:angle.geq} follows.
    \medskip
	
	\noindent
	\emph{Step 2:} We proceed to show that
	\begin{align*}
	\label{eq:angle.leq}
	\cos(\angle(\gamma,\eta)) 
	\leq  \sup_{O \in \mathbf{O}^\ast(L,L)} \frac{ \langle V, WO \rangle_{\rm F} }{ \|V\|_{\rm F}   \| W\|_{\rm F} }.
	\end{align*}
	 A twofold application of  the polarization identity  shows that, for every $O\in {\bf O}$, 
	   \begin{align*}
	   \|(L + vV) - &(L + wW)O \|_{\rm F}^2 \\
	  &=    \|L + vV\|_{\rm F}^2 + \|L + wW \|_{\rm F}^2 - 2 \langle L + vV, (L + wW)O \rangle_{\rm F} \\
	 &  =  v^2  \|V\|_{\rm F}^2 + w^2\| W \|_{\rm F}^2  +  2\langle L, L+ vV +w W \rangle  - 2 \langle L + vV, (L + wW)O \rangle_{\rm F}.
	\end{align*}
	It follows that
	\[  \mathcal{E}(v,w)
	=    \frac{-\langle L, L + vV + w W \rangle_{\rm F} + \inf_{O \in \mathbf{O}} \langle L + vV, (L + wW)O \rangle_{\rm F}}{vw \|V\|_{\rm F} \|W\|_{\rm F}}. \]
	 Denote by $O^{v,w}\in \mathbf{O}$ an optimizer for the last term, that is,
	 \[O^{v,w}\in {\bf O}^\ast( L + vV, L + wW).\]
	 By Proposition \ref{prop:O.equal.O} we have that ${\rm Id} \in  {\bf O}^\ast( L , L + wW)$, hence
	 \[
     \langle L, L + w W \rangle_{\rm F} = \sup_{O \in \mathbf{O}} \langle L, (L + wW) O \rangle_{\rm F} 
	 \geq \langle L, (L + wW) O^{v,w} \rangle_{\rm F}, \]
	 from which it follows that
	 \[
     \mathcal{E}(v,w)
	\leq   \frac{  - \langle L,V \rangle_{\rm F}+    \langle V, (L + wW)O^{v,w} \rangle_{\rm F} }{w \|V\|_{\rm F}\|W\|_{\rm F}}. 
    \]
	 Fix $w\in(0,1]$.
	 Clearly $(O^{v,w})_{v\in(0,1)}$ is relatively compact, and by Lemma \ref{lem:acc.point}, any accumulation point  $O$ of $(O^{v,w})_{v\in(0,1)}$ as $v \to 0$ must lie in $\mathbf{O}^\ast(L,L)$. In particular,  $ L O = L$.
	Thus 
	 \[ \limsup_{v\to 0} \mathcal{E}(v,w)
	 \leq \lim_{v \to 0} \frac{ -\langle L, V\rangle +  \langle V, (L + w W)O^{v,w} \rangle_{\rm F} }{w \|V\|_{\rm F} \| W \|_{\rm F}}
     =  \frac{  \langle V, W O\rangle_{\rm F} }{\|V\|_{\rm F} \| W \|_{\rm F}},\]
	 from which the claim follows.
\end{proof}

\subsection{The tangent cone}
Let $\gamma,\eta\in\Gamma([L])$ be two geodesics starting from $[L]$ which are induced by $V$ and $W$ as in \Cref{thm:identification-of-geodesics}.
Then, by \Cref{thm:angle},  $\angle(\gamma,\eta)=0$ (i.e., $\gamma\sim_{\angle}\eta$) if and only if there is $O\in {\bf O}^\ast(L,L)$ and $\lambda>0$ for which  $V=\lambda WO$.

Recall that 
\[ \mathcal{V}(L) =  \left\{ V \in {\bf L}: (V^\top L)_{t,t} \text{ is symmetric for all } t = 1, \dots, T\right\}. \]
Clearly $\mathcal{V}(L)$ is a linear space.
To show that (after taking an appropriate quotient) $\mathcal{V}(L)$ is isometric to the tangent cone, we require some preliminary steps.

For a symmetric positive semi-definite matrix $A$, we set $\lambda_{\min}^+(A)$ 
to be $+\infty$ if $A=0$ and its smallest strictly positive eigenvalue otherwise.  

\begin{lemma} \label{lem:V-psd}
Let $L \in \mathbf L$ and $V \in \mathcal V(L)$ such that for all $t = 1, \dots, T$ we have 
\[
\|V_{\cbullet, t}\|_{\rm op}^2
<  \lambda_{\min}^+ \left((L^\top L)_{t,t}\right),\]
where $\|\cdot\|_{\rm op}$ is the operator norm.
Then $((L + V)^\top L)_{t,t}$ is positive semi-definite and
\[
    {\bf O}^\ast(L,L+V) = {\bf O}^\ast(L, L).
\]
In particular, if $L\in {\bf L}^{\rm reg}$ then 
$\mathbf O^*(L, L + V) = \{ {\rm Id} \}$.
\end{lemma}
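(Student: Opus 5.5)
The argument works blockwise and reduces everything to \Cref{lem:O.equal.O.2}. Fix $t$ and write
\[
A:=(L^\top L)_{t,t}=L_{\cbullet,t}^\top L_{\cbullet,t}, \qquad B:=((L+V)^\top L)_{t,t}=A+(V^\top L)_{t,t}.
\]
Since $V\in\mathcal V(L)$, the matrix $(V^\top L)_{t,t}=V_{\cbullet,t}^\top L_{\cbullet,t}$ is symmetric, so $B$ is symmetric. By \Cref{lem:O.equal.O.2}, it then suffices to show two things for every $t$: that $B$ is positive semi-definite, and that $\operatorname{rk}(B)=\operatorname{rk}(A)$. The regular case follows at once: if $L\in{\bf L}^{\rm reg}$, then ${\bf O}^\ast(L,L)=\{{\rm Id}\}$ by \Cref{cor:O=Id}.

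\emph{Kernel containment.} If $x\in\ker A=\ker L_{\cbullet,t}$, then $Bx=(L_{\cbullet,t}+V_{\cbullet,t})^\top L_{\cbullet,t}x=0$. Hence $\ker A\subseteq\ker B$. Because $B$ is symmetric, we also get $\operatorname{img}B=(\ker B)^\perp\subseteq(\ker A)^\perp=\operatorname{img}A=:E$. So $B$ maps $E$ into itself and vanishes on $E^\perp$, and everything reduces to the restriction of $B$ to $E$. It remains to show that this restriction is positive definite on $E$. That gives positive semi-definiteness of $B$ and $\ker B=\ker A$, hence equal ranks.

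\emph{Key estimate (the main step).} For $x\in E$, write
\[
x^\top Bx=|L_{\cbullet,t}x|^2+\langle V_{\cbullet,t}x,\,L_{\cbullet,t}x\rangle
\ \ge\ |L_{\cbullet,t}x|\big(|L_{\cbullet,t}x|-\|V_{\cbullet,t}\|_{\rm op}|x|\big).
\]
On $E$ we have $|L_{\cbullet,t}x|^2=x^\top Ax\ge\lambda_{\min}^+(A)|x|^2$, so $|L_{\cbullet,t}x|\ge\sqrt{\lambda_{\min}^+(A)}\,|x|$. The hypothesis gives $\|V_{\cbullet,t}\|_{\rm op}<\sqrt{\lambda_{\min}^+(A)}$, so for $x\in E\setminus\{0\}$ the bracket is strictly positive and $L_{\cbullet,t}x\neq0$. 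Therefore $x^\top Bx>0$.

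If $A=0$, then $E=\{0\}$ and $B=0$, so the claims hold trivially; this is consistent with the convention $\lambda_{\min}^+(0)=+\infty$.

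\emph{Conclusion.} Combining these, $B$ is positive semi-definite with $\ker B=\ker A$. Thus condition (i) of \Cref{lem:O.equal.O.2} holds with $M=L+V$, which gives ${\bf O}^\ast(L,L+V)={\bf O}^\ast(L,L)$.

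The only delicate point is the restriction to $E$. A cruder bound using $\lambda_{\min}$ instead of $\lambda_{\min}^+$ fails when $A$ is singular. The kernel containment, which holds automatically since $(V^\top L)_{t,t}$ factors through $L_{\cbullet,t}$, is exactly what allows us to work on $E$ only.
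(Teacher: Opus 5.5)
Your proof is correct and follows essentially the same route as the paper: you reduce to \Cref{lem:O.equal.O.2}, use the Cauchy--Schwarz lower bound on $\ker(L_{\cbullet,t})^\perp$ together with $\lambda_{\min}^+$, and use the kernel inclusion $\ker A\subseteq\ker B$ to get equal ranks. Your explicit use of symmetry and $\ker A\subseteq\ker B$ to show that $B$ vanishes on $E^\perp$, so that the cross terms drop out, tidies up a step the paper leaves implicit when it checks non-negativity separately on $\ker(L_{\cbullet,t})$ and on its orthogonal complement.
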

\begin{proof}
    Let $t = 1, \dots, T$ be arbitrary and note that 
    \begin{equation}
    \label{eq:MLexpanded}
    \left(\left(L+V\right)^\top L \right)_{t,t} = (L^\top L)_{t,t} + (V^\top L)_{t,t}
    \end{equation}
    is a symmetric matrix since $( V^\top L)_{t,t}$ is symmetric. 
    We want to apply \Cref{lem:O.equal.O.2}, for which we need to show that $((L + V)^\top L)_{t,t}$ 
    is positive semi-definite and 
    ${\rm rk}((L^\top L)_{t,t}) = {\rm rk}(((L+V)^\top L)_{t,t})$ for all $t = 1, \dots, T$.
    To prove this claim, observe that for any $x \in \R^d$
    \begin{align}
    \nonumber
    x^\top \left((L+V)^\top L \right)_{t,t} x
    &= x^\top ( L^\top L)_{t,t} x +  
    x^\top ( V^\top L)_{t,t} x
    \\  \nonumber
    &= \|L_{\cbullet, t}x \|_2^2 + \langle V_{\cbullet, t} x, L_{\cbullet, t} x \rangle
    \\ \label{eq:cal.V.psd}
    &\geq \left(\|L_{\cbullet, t}x \|_2 - \| V_{\cbullet, t} x\|_2 \right) \|L_{\cbullet, t} x \|_2,
    \end{align}
    where we used the Cauchy-Schwartz inequality in the last line.
    Now, for $x \in \ker(L_{\cbullet, t})$, the RHS of \eqref{eq:cal.V.psd} is clearly non-negative. 
    In particular, if  $(L^\top L)_{t,t}=0$, the claim immediately follows.
        Otherwise, let $x \in \ker(L_{\cbullet, t})^\perp$ and note that
    \begin{equation} \label{eq:psd-inequ}
        \left(\|L_{\cbullet, t}x \|_2 - \| V_{\cbullet, t} x\|_2 \right)
        \geq \left( \sqrt{\lambda_{\rm min}^+((L^\top L)_{t,t})} 
            - \|V_{\cbullet, t}\|_{\rm op}\right) \|x\|_2,
    \end{equation}
    which is non-negative by assumption.
    Thus $((L+V)^\top L)_{t,t}$ is positive semi-definite.

    To prove that ${\rm rk}((L^\top L)_{t,t}) = {\rm rk}(((L+V)^\top L)_{t,t})$, by \Cref{rmk:rank-im-ker}, it suffices to prove that   ${\rm ker}((L^\top L)_{t,t}) = {\rm ker}(((L+V)^\top L)_{t,t})$.
    To that end, note that by \eqref{eq:MLexpanded} 
    \[\ker((L^\top L)_{t,t}) \subseteq \ker(((L+V)^\top L)_{t,t}).\]
    The other inclusion follows from  \eqref{eq:cal.V.psd}  and \eqref{eq:psd-inequ}.
   
   We conclude that the assumptions of \Cref{lem:O.equal.O.2} are satisfied for $M:= L+V$, from which the claim follows. 
\end{proof}

\begin{proposition}
\label{prop:cone.VL}
    For every $L\in{\bf L}$, we have that
    \[ \mathcal{V}(L) = \left\{ \lambda V : \lambda \geq 0 , \, V\in {\bf V}(L)\right \}.  \]
\end{proposition}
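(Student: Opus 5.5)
We prove the two inclusions separately.

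\emph{Inclusion ``$\supseteq$''.} Since $\mathcal V(L)$ is a linear space, it suffices to show that ${\bf V}(L)\subseteq\mathcal V(L)$. Let $V=MP-L$ with $M\in{\bf L}$ and $P\in{\bf O}^\ast(L,M)$. As in \eqref{eq:geodesic-expanded},
\[
(V^\top L)_{t,t}=P_t^\top(M^\top L)_{t,t}-(L^\top L)_{t,t}.
\]
By \Cref{lem:ABW-optimizers}(ii), $P_t^\top(M^\top L)_{t,t}$ is positive semi-definite, and in particular symmetric. The matrix $(L^\top L)_{t,t}$ is symmetric as well. Hence $(V^\top L)_{t,t}$ is symmetric for every $t$, so $V\in\mathcal V(L)$. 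The case $\lambda=0$ gives $0\in\mathcal V(L)$ trivially.

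\emph{Inclusion ``$\subseteq$''.} Let $V\in\mathcal V(L)$ with $V\neq 0$. Choose $\lambda>0$ large enough that $\tilde V:=V/\lambda$ satisfies
\[
\|\tilde V_{\cbullet,t}\|_{\rm op}^2<\lambda^+_{\min}\big((L^\top L)_{t,t}\big)\quad\text{for all } t.
\]
This is possible because each $\lambda^+_{\min}$ is either $+\infty$ or strictly positive. Since $\mathcal V(L)$ is linear, $\tilde V\in\mathcal V(L)$, so \Cref{lem:V-psd} applies and gives ${\bf O}^\ast(L,L+\tilde V)={\bf O}^\ast(L,L)$. By \Cref{lem:O^*-facts}(i), this set contains ${\rm Id}$. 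Setting $M:=L+\tilde V\in{\bf L}$ and $P:={\rm Id}\in{\bf O}^\ast(L,M)$, we get $\tilde V=MP-L\in{\bf V}(L)$. Therefore $V=\lambda\tilde V$ has the required form.

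Given \Cref{lem:V-psd}, there is no real obstacle. The only point needing care is that the smallness condition in \Cref{lem:V-psd} can be achieved by rescaling. This holds because the condition is homogeneous in $V$ and $\lambda^+_{\min}>0$ by definition, including the degenerate case $(L^\top L)_{t,t}=0$, where $\lambda^+_{\min}=+\infty$ and the condition is void.
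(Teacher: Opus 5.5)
Your proof is correct and follows essentially the same route as the paper. For ``$\supseteq$'' you use the symmetry of $P_t^\top(M^\top L)_{t,t}$ from \Cref{lem:ABW-optimizers}; for ``$\subseteq$'' you rescale $V$ into the regime of \Cref{lem:V-psd}, so that ${\rm Id}\in{\bf O}^\ast(L,L+\tilde V)$ and hence $\tilde V=(L+\tilde V){\rm Id}-L\in{\bf V}(L)$. Excluding $V=0$ is unnecessary: that case is trivial, and your rescaling argument covers it anyway.
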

\begin{proof}
    Let $V\in {\bf V}(L)$, i.e., $V=MP-L$ for some $M\in{\bf L}$ and $P\in {\bf O}^\ast(L,M)$, and let $\lambda\geq0$.
    By \Cref{lem:ABW-optimizers}, $P^\top_t (M^\top L)_{t,t}$ is symmetric, 
    hence $V\in \mathcal{V}(L)$, and as $\mathcal{V}(L)$ is a subspace, $\lambda V\in\mathcal{V}(L)$.

    For the other inclusion, let $V \in \mathcal V(L)$. It is clear that there exists a constant $\lambda > 0$ such that $\tilde V := \lambda V$ satisfies the assumptions of \Cref{lem:V-psd}. 
    Thus ${\rm Id} \in \mathbf O^\ast(L, L+\tilde V)$, which implies 
    \[\tilde V = (\tilde V + L){\rm Id} - L \in \mathbf V(L).\]
    As $V = \frac{1}{\lambda}\tilde V$, the claim follows.
\end{proof}

\begin{proposition}
\label{prop:complete}
	For every $L\in{\bf L}$, $((\Gamma([L])\setminus\{0\})/_{\angle}\,, \, \angle )$ is complete.
    Moreover, setting
    \[ \mathcal{S} := \left\{ \frac{V}{\|V\|_{\rm F}} : V\in {\bf V}(L)\setminus\{0\} \right\},\]
    we have that $\overline{\Gamma}([L])$ is isomorphic to $\mathcal{S}/_{{\bf O}^\ast(L,L)}$.    
\end{proposition}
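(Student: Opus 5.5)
The plan is to build an explicit map from $\mathcal{S}/_{{\bf O}^\ast(L,L)}$ onto $(\Gamma([L])\setminus\{0\})/_{\angle}$ and show that it is an isometry, with $\mathcal{S}/_{{\bf O}^\ast(L,L)}$ carrying the metric
\[
\angle_{\mathcal S}([V],[W]) := \arccos\Big(\sup_{O\in{\bf O}^\ast(L,L)}\langle V,WO\rangle_{\rm F}\Big).
\]
Completeness of $(\Gamma([L])\setminus\{0\})/_{\angle}$ then follows from completeness of $\mathcal{S}/_{{\bf O}^\ast(L,L)}$.

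To each $\gamma\in\Gamma([L])\setminus\{0\}$, \Cref{thm:identification-of-geodesics} assigns some $V\in{\bf V}(L)\setminus\{0\}$ with $\gamma(u)=[L+uV]$. I send $\gamma$ to the class of $V/\|V\|_{\rm F}$ in $\mathcal{S}/_{{\bf O}^\ast(L,L)}$. By \Cref{thm:angle}, $\angle(\gamma,\eta)=0$ holds exactly when $V/\|V\|_{\rm F} = (W/\|W\|_{\rm F})O$ for some $O\in{\bf O}^\ast(L,L)$. The sup in \Cref{thm:angle} is attained since ${\bf O}^\ast(L,L)$ is compact, and equality in Cauchy--Schwarz for unit vectors forces this identity. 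Hence the map is well defined and injective on $\angle$-classes. It is surjective because every $V\in{\bf V}(L)\setminus\{0\}$ generates a geodesic by \Cref{lem:geodesics.one.side}. \Cref{thm:angle} also shows that the map carries $\angle$ to $\angle_{\mathcal S}$. This quantity is a genuine metric on the quotient: ${\bf O}^\ast(L,L)$ is a group of isometries of $(\bL,\|\cdot\|_{\rm F})$, so the spherical (intrinsic) distance on the unit sphere descends to the orbit space, and $\arccos\sup_O\langle V,WO\rangle$ is exactly the minimal spherical distance between orbits.

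It remains to show that $\mathcal{S}$ is closed, hence compact, in the unit sphere of $\bL$. By \Cref{prop:cone.VL}, ${\bf V}(L)\setminus\{0\}$ and $\mathcal{V}(L)\setminus\{0\}$ generate the same rays, since $\mathcal{V}(L)=\{\lambda V:\lambda\ge0,\,V\in{\bf V}(L)\}$. Therefore $\mathcal{S}=\mathcal{V}(L)\cap\{\|V\|_{\rm F}=1\}$, which is the unit sphere of a linear subspace and therefore compact. The quotient of a compact metric space by a compact isometry group, with the orbit distance, is again compact and in particular complete. So $(\Gamma([L])\setminus\{0\})/_{\angle}$ is complete, its completion $\overline{\Gamma}([L])$ coincides with it, and the bijection above gives $\overline{\Gamma}([L])\cong\mathcal{S}/_{{\bf O}^\ast(L,L)}$.

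The main obstacle is the careful use of \Cref{thm:angle} to identify $\angle$ with the orbit spherical distance, in particular checking that it is a metric on the quotient. The triangle inequality follows by choosing optimal group elements successively and using invariance of the inner product under right multiplication by ${\bf O}^\ast(L,L)$: $\langle VO,WO\rangle=\langle V,W\rangle$ since $O$ is orthogonal. The rest is routine.
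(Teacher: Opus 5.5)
Your proposal is correct and follows essentially the paper's proof: \Cref{thm:angle} identifies $\angle$-classes with $\mathbf O^\ast(L,L)$-orbits of normalized representatives, and \Cref{prop:cone.VL} gives $\mathcal{S}=\mathcal{V}(L)\cap\{\|\cdot\|_{\rm F}=1\}$, which is compact, so completeness follows. Your version is slightly more explicit than the paper's, which uses a subsequence argument together with continuity of $\angle$ with respect to $\|\cdot\|_{\rm F}$: you identify $\angle$ with the orbit spherical distance and check directly that the quotient is a compact metric space.
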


\begin{proof}
    We first show the isomorphism
    \[(\Gamma([L]) \setminus\{0\})/_{\angle}  \cong \mathcal{S}/_{{\bf O}^\ast(L,L)}.\]
   Indeed, let $\gamma,\eta\in\Gamma([L])$ be represented by $V,W\in {\bf V}(L)\setminus\{0\}$, respectively, and set $\overline{V}:=V/\|V\|_{\rm F}$ and $\overline{W}:= W  / \| W\|_{\rm F}$.
    Theorem \ref{thm:angle} implies that $\angle(\gamma,\eta) =0$ if and only if there are $O\in{\bf O}^\ast(L,L) $     and $\lambda \geq 0$ such that $V=\lambda W O $. 
    The latter is equivalent to $\overline{V}= \overline{W}O$.    
    Conversely, if $V,W \in\mathcal{S}$ satisfy $V=  WO$ for some $O \in{\bf O}^\ast(L,L)$, then the associated geodesics $\gamma,\eta \in \Gamma([L])$ satisfy $\angle(\gamma,\eta)=0$.

    Next, we claim that  $(\mathcal{S},\|\cdot\|_{\rm F})$ is compact, in which case completeness of $((\Gamma([L])\setminus\{0\})/_{\angle},\angle)$ clearly follows since $\angle$ is continuous with respect to  $\|\cdot\|_{\rm F}$.
    To prove compactness, let $(W^n)_{n \geq 1}$ be a sequence in $\mathcal{S}$, and write 
    \[W^n=\frac{V^n}{\|V^n\|_{\rm F}}, \quad \text{for }V^n\in { \bf V}(L).\]
    By \Cref{prop:cone.VL}, we have that ${\bf V}(L)\subseteq \mathcal{V}(L)$ and the latter is a cone, hence $W^n\in\mathcal{V}(L)$ for all $n\geq 1$.
    Since $\mathcal{V}(L)$ is closed w.r.t.\ $\|\cdot \|_{\rm F}$, there exist $W\in\mathcal{V}(L)$ and a subsequence $(n_k)_{k\geq 1}$ satisfying $\|W^{n_k}-W\|_{\rm F} \to 0$ as $k\to\infty$.
    By another application of Proposition \ref{prop:cone.VL}, there are $V\in {\bf V}(L)$ and $\lambda\geq 0$ satisfying $W=\lambda V$.
    Since $\|W\|_{\rm F}=1$, clearly $\lambda = 1/\|V\|_{\rm F}$ and thus $W\in\mathcal{S}$.
\end{proof}

We are now ready to prove Theorem~\ref{thm:tangent.intro}, 
the isometry between the tangent cone of $\mathbf L/_{\mathbf O}$ and the appropriate quotient of $\mathcal V(L)$:

\begin{proof}[Proof of Theorem~\ref{thm:tangent.intro}]
    Using the notation of Proposition \ref{prop:complete}, $\overline{\Gamma}([L]) \cong \mathcal{S}/_{{\bf O}^\ast(L,L)}$.
    Moreover, by Proposition \ref{prop:cone.VL} we have that 
    $ \{\lambda W : W\in\mathcal{S}, \lambda \geq 0\} = \mathcal{V}(L)$,
    from which it readily follows that 
    \[  {\bf T}([L]):={\rm cone} (\overline{\Gamma}([L])) \cong \mathcal{V}(L) /_{{\bf O}^\ast(L,L)}.\]
    Thus, by the definition of  $d_{ {\bf O}^\ast(L,L)}$, followed by Theorem \ref{thm:angle} and Pythagorean theorem,
    \begin{align*}
        d_{ {\bf T}([L])}^2(V,W) 
        &= \|V\|_{\rm F}^2 + \| W\|_{\rm F}^2 - 2 \sup_{O\in{\bf O}^\ast(L,L)} \langle V, WO \rangle_{\rm F} 
    = d_{{\bf O}^\ast(L,L)} ^2(V,W),
    \end{align*}
    as claimed.
\end{proof}

Finally, we characterize when  a unique geodesic between $[L]$ and $[M]$  exists.
Recall that, by the definition of the exponential map, for every $V\in {\bf V}(L)$,  
\begin{align}
    \label{eq:exponential.structure}
    \exp_{[L]}(\iota_L^{-1}([V]_{{\bf O^\ast}(L,L)})) = [L+V],
\end{align}
and by \Cref{cor:unique.geo}, there is a unique geodesic between $[L]$ and $[M]$ if and only if $\{ MP- L: P\in  {\bf O}^\ast(L,M)\}/_{{\bf O^\ast}(L,L)}$ is a singleton.
In that case, 
    \begin{align}
    \label{eq:log.structure}
    \log_{[L]}([M]) = \iota_L^{-1}( [MP-L]_{{\bf O^\ast}(L,L)}) 
    \end{align}
    for some $P\in {\bf O}^\ast(L,M)$.

\begin{lemma} \label{lem:log-map-uniqueness}
    For every $L,M \in \mathbf L$, the following are equivalent:
    \begin{enumerate}[(i)]
        \item $\{MP - L : P \in \mathbf O^\ast(L,M) \}/_{\mathbf O^\ast(L,L)}$ is a singleton;
        \item $\operatorname{rk}((M^\top M)_{t,t}) = \operatorname{rk}((M^\top L)_{t,t})$ for all $t=1,\dots,T$.
    \end{enumerate}
\end{lemma}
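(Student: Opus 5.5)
The plan is to decouple over time blocks and reduce everything to a single $d\times d$ block. By \Cref{lem:ABW-optimizers}(iii), every $P\in\mathbf O^\ast(L,M)$ has blocks $P_t=U_tQ_tV_t^\top$, where $(M^\top L)_{t,t}=U_tS_tV_t^\top$ is an SVD and $Q_t\in\mathbf Q(r_t)$ with $r_t:=\operatorname{rk}((M^\top L)_{t,t})$. Likewise $O\in\mathbf O^\ast(L,L)$ is characterized by $L_{\cbullet,t}O_t=L_{\cbullet,t}$ for all $t$ (\Cref{lem:O^*-facts}(i)). Both sets are products over $t$, so $MP-L$ has $t$-th block column $M_{\cbullet,t}P_t-L_{\cbullet,t}$. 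Hence (i) holds if and only if, for every $t$ and all admissible $P_t,P_t'$, there is an orthogonal $O_t$ with $L_{\cbullet,t}O_t=L_{\cbullet,t}$ and
\[
M_{\cbullet,t}P_t-L_{\cbullet,t}=(M_{\cbullet,t}P_t'-L_{\cbullet,t})O_t ,
\]
that is, $M_{\cbullet,t}P_t=M_{\cbullet,t}P_t'O_t$.

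\emph{(ii) $\Rightarrow$ (i).} Write $U_t=(U',U'')$ with $U'$ spanning $\operatorname{img}((M^\top L)_{t,t})$ and $U''$ spanning its orthogonal complement, which is $\ker((M^\top L)_{t,t}^\top)=\ker((L^\top M)_{t,t})$. Since $\ker(M_{\cbullet,t})=\ker((M^\top M)_{t,t})\subseteq\ker((L^\top M)_{t,t})$, the rank condition (ii) gives, by rank--nullity, equality of these kernels. Therefore $M_{\cbullet,t}U''=0$. Using $P_t=U_tQ_tV_t^\top$ and $Q_t=\diag(\mathrm{Id},Q')$, we get $M_{\cbullet,t}P_t=M_{\cbullet,t}U'V'^\top$, which does not depend on $Q_t$. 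Thus $MP$ is the same for all $P\in\mathbf O^\ast(L,M)$, and the quotient is a singleton even without using $\mathbf O^\ast(L,L)$.

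\emph{(i) $\Rightarrow$ (ii).} Argue by contraposition: assume $r_t<\operatorname{rk}((M^\top M)_{t,t})$ for some $t$. Then there is a unit vector $u\in\operatorname{img}(U'')$ with $M_{\cbullet,t}u\neq0$. Take $P_t=U_tV_t^\top$ and $P_t'=U_t\diag(\mathrm{Id},Q')V_t^\top$ with $Q'=-\mathrm{Id}$, a reflection on the last $d-r_t$ coordinates. Then
\[
M_{\cbullet,t}P_t-M_{\cbullet,t}P_t'=2M_{\cbullet,t}U''V''^\top\neq0 .
\]
We must show that no $O_t$ fixing $L_{\cbullet,t}$ identifies the two. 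Note that $V''$ spans $\ker((M^\top L)_{t,t})\supseteq\ker(L_{\cbullet,t})$, while $O_t$ acts as the identity on $\ker(L_{\cbullet,t})^\perp=\operatorname{img}(L_{\cbullet,t}^\top)$ and rotates $\ker(L_{\cbullet,t})$.

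The key quantitative step is to compare Frobenius inner products with $L$. The quantity $\langle M_{\cbullet,t}P_t,L_{\cbullet,t}\rangle=\tr(S_t)$ is the same for both choices, so that test does not separate them. Instead I would use the invariant $\|(M_{\cbullet,t}P_t-L_{\cbullet,t})x\|$ for $x$ in the subspace $\operatorname{img}(L_{\cbullet,t}^\top)$ that $O_t$ fixes. Concretely, $O_t$ preserves $N:=\ker(L_{\cbullet,t})$ and is the identity on $N^\perp$. So $M_{\cbullet,t}P_t=M_{\cbullet,t}P_t'O_t$ forces the two matrices to agree on $N^\perp$:
\[
M_{\cbullet,t}U''V''^\top x=0\quad\text{for all }x\in N^\perp .
\]
If $V''^\top$ vanishes on $N^\perp$, i.e.\ $\operatorname{img}(V'')\subseteq N$, we are in the degenerate case and would instead choose $Q'$ more cleverly: any $Q'$ acting on $\operatorname{img}(V'')$ is absorbed by $O_t$ only if it intertwines with the fixed $U''$ side. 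This is where I expect the main obstacle: handling the case $\operatorname{img}(V'')\subseteq\ker(L_{\cbullet,t})$.

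In that case I would exploit the freedom in $Q'$ fully. The map $Q'\mapsto M_{\cbullet,t}U''Q'V''^\top$ has orbit under right multiplication by $O_t\in\mathrm O(N)$ equal to matrices with fixed singular values. Meanwhile $Q'\mapsto M_{\cbullet,t}U''Q'$ ranges over a set whose "$U$-side" changes. So one can pick $Q'$ such that the images $\operatorname{img}(M_{\cbullet,t}U''Q'V''^\top)$ agree but the maps differ by something not of the form $O_t$: for instance, if $\dim\operatorname{img}(V'')>$ the rank of $M_{\cbullet,t}U''$ this might fail. I would need to check whether $\mathbf O^\ast(L,L)$ can in fact absorb all such differences there, possibly revising the argument by showing that $\operatorname{img}(V'')\subseteq N$ together with $M_{\cbullet,t}U''\neq0$ contradicts $\ker(L_{\cbullet,t})\subseteq\ker((M^\top L)_{t,t})$ combined with the dimension count $\dim\operatorname{img}(V'')=d-r_t$.
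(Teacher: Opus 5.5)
\textbf{The statement is false, and the case you flagged is exactly where it fails.}

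Your blockwise reduction is correct, and so is your proof of (ii)$\Rightarrow$(i). Under (ii) one has $M_{\cbullet,t}U''=0$, so $M_{\cbullet,t}P_t=M_{\cbullet,t}U'V'^\top$ does not depend on $Q_t$. Hence even the set $\{MP:P\in\mathbf O^\ast(L,M)\}$ is a singleton. The paper reaches the same intermediate fact, that $\{MP\}$ is a singleton iff (ii), by a longer route through \Cref{lem:O.equal.O.2} applied to the pair $(M,LP^\top)$.

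The direction (i)$\Rightarrow$(ii) cannot be completed, and the contradiction you hope for in the degenerate case does not exist. Since always $\ker(L_{\cbullet,t})\subseteq\operatorname{img}(V'')$, your degenerate case means $\operatorname{img}(V'')=\ker(L_{\cbullet,t})$. Take any two admissible blocks $P_t=U_t\diag(\mathrm{Id},Q)V_t^\top$ and $P_t'=U_t\diag(\mathrm{Id},Q')V_t^\top$. The orthogonal matrix $O_t:=(P_t')^\top P_t=V'V'^\top+V''(Q')^\top QV''^\top$ then satisfies $L_{\cbullet,t}O_t=L_{\cbullet,t}$ and $P_t'O_t=P_t$. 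So block $t$ never produces two distinct classes, whatever $M_{\cbullet,t}U''$ is.

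The paper's own example after \Cref{lem:geodesics.one.side} is a counterexample. Take $d=1$, $T=2$, $L=\diag(1,0)$, $M=-\mathrm{Id}$. Then $\{MP-L\}=\{\diag(0,1),\diag(0,-1)\}$, which is a single $\mathbf O^\ast(L,L)$-orbit via $\diag(1,-1)$, so (i) holds. But $\operatorname{rk}((M^\top M)_{2,2})=1\neq0=\operatorname{rk}((M^\top L)_{2,2})$, so (ii) fails.

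\textbf{Where the paper's proof breaks.} Its first step uses the invariance of $\{MP-L:P\in\mathbf O^\ast(L,M)\}$ under right multiplication by $\mathbf O^\ast(L,L)$. From this it concludes that (i) is equivalent to $\{MP:P\in\mathbf O^\ast(L,M)\}$ being a singleton. Invariance only makes the set a union of orbits; (i) says it is one orbit, not one point. What the paper actually establishes is (ii)$\Leftrightarrow$ ``$\{MP\}$ is a singleton'', hence only (ii)$\Rightarrow$(i).

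\textbf{Your nondegenerate case is also incomplete as written.} The test $Q'=-\mathrm{Id}$ can be absorbed even when $\operatorname{img}(V'')\not\subseteq\ker(L_{\cbullet,t})$. For example, with $T=1$, $d=2$, $L=\diag(1,0)$, $M=\diag(0,1)$, one has $M=(-M)\diag(1,-1)$ with $\diag(1,-1)\in\mathbf O^\ast(L,L)$. There you need a genuine rotation $Q'$ to separate classes.

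\textbf{Corrected criterion.} Pushing your analysis through, with $C:=M_{\cbullet,t}U''$: block $t$ yields a single class iff every $W\in\mathrm O(d-r_t)$ factors as $W=hg$ with $Ch=C$ and $g=V''^\top O_tV''$ for some $O_t$ fixing $L_{\cbullet,t}$. Testing $W^\top e$ for a unit $e\perp\ker C$ shows this happens iff $C=0$ or $\ker(L_{\cbullet,t})=\ker((M^\top L)_{t,t})$. Equivalently, (i) holds iff
\[
\operatorname{rk}((M^\top L)_{t,t})=\min\{\operatorname{rk}((M^\top M)_{t,t}),\operatorname{rk}((L^\top L)_{t,t})\}\quad\text{for all } t.
\]
For $L\in\mathbf L^{\rm reg}$ this coincides with (ii), so the regular-case results survive. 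The lemma as stated, and the uniqueness part of \Cref{thm:exponential.general.intro} for general $L$, need this correction.
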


\begin{proof}
If follows from \Cref{lem:O^*-facts} that 
for all $O \in \mathbf O^\ast(L,L)$,
\[
\{MP - L : P \in \mathbf O^\ast(L,M) \}
=
\{(MP - L)O : P \in \mathbf O^\ast(L,M) \}.
\]
Therefore (i) is equivalent to
$\{MP: P \in \mathbf O^\ast(L,M) \}$ being a singleton.
We claim that the latter is equivalent to (ii).
Indeed, $\{MP: P \in \mathbf O^\ast(L,M) \}$ is a singleton
if and only if for any reference optimizer $P \in \mathbf O^\ast(L,M)$
we have $MP = M \tilde P$ for all $\tilde P \in \mathbf O^\ast(L,M)$.
Multiplying the equation by $P^\top$, 
this is equivalent to 
\begin{equation} \label{eq:MeqMP}
    M = M \overline P \text{ for all }\overline P \in \mathbf O^\ast(L P^\top,M),
\end{equation}
where we used the fact that $\mathbf O^\ast(L,M) \cdot P^\top = \mathbf O^\ast(L P^\top,M)$.

Next, since $(M^\top L)_{t,t}P_t^\top$ is symmetric for all $t = 1, \dots, T$, \Cref{lem:O^*-facts} (iv) implies that
$\mathbf O^\ast(L P^\top,M) = \mathbf O^\ast(M, L P^\top)$.
Therefore, \eqref{eq:MeqMP} is equivalent to $\mathbf O^\ast(M, L P^\top) \subseteq \mathbf O^\ast(M,M)$.
Moreover by \Cref{lem:O^*-facts} (ii) we always have that $\mathbf O^\ast(M,M)\subseteq \mathbf O^\ast(M, L P^\top)$, hence \eqref{eq:MeqMP} is equivalent to $\mathbf O^\ast(M, L P^\top) = \mathbf O^\ast(M,M)$.

Finally, by \Cref{lem:O.equal.O.2}, the latter  is equivalent to $((L P^\top)^\top M)_{t,t}$ being positive semi-definite 
and 
\[ \operatorname{rk}\left((M^\top M)_{t,t}\right) 
= \operatorname{rk}\left((L P^\top)^\top M)_{t,t}\right)\]
for every $t = 1, \dots, T$.
\end{proof}

\begin{proof}[Proof of Theorem \ref{thm:exponential.general.intro}]
    This follows from \Cref{cor:unique.geo}, \Cref{lem:log-map-uniqueness}, \eqref{eq:exponential.structure}, and \eqref{eq:log.structure}.
\end{proof}

\subsection{The tangent space for $L\in {\bf L}^{\rm reg}$}
We proceed to prove the statements in the introduction pertaining ${\bf L}^{\rm reg}$, 
the subset corresponding to Gaussian processes that are regular in the adapted sense.

\begin{proof}[Proof of \Cref{thm:L.reg.convex.subset.L.intro}]
    Let $L,M \in \mathbf L^{\rm reg}$. 
    By \Cref{thm:identification-of-geodesics} and \eqref{eq:V-def}, there is $P \in {\bf O}^\ast(L,M)$ such that 
    \[ \gamma(u) = [L(u)] , \quad \text{where } L(u)=(1-u)L + uMP,  \quad u\in[0,1].\]
    Thus
    \begin{align*}
    (L(u)^\top L)(u)_{t,t}
        &=(1-u)^2 (L^\top L)_{t,t}
    + 2 u (1-u)(P^\top M^\top L)_{t,t}
    + u^2 ( P^\top M^\top M P)_{t,t},
\end{align*} 
    where we used that $(L^\top MP)_{t,t} = (P^\top M^\top L)_{t,t}$. 
    Note that both 
    $(L^\top L)_{t,t}$ and $(M^\top M)_{t,t}$
    are positive definite for all $t = 1, \dots, T$,
    hence  $(L(u)^\top L(u))_{t,t}$ is positive definite for every $t = 1, \dots, T$,  and $\gamma(u)\in \mathbf L^{\rm reg}/_{\bf O}$ for all $u \in [0,1]$, as claimed.
    
    The structure of the tangent cone is a consequence of \Cref{thm:tangent.intro} and the fact that, for $L\in {\bf L}^{\rm reg}$, ${\bf O}^\ast(L,L)=\{\rm Id\}$ (see \Cref{cor:O=Id}); hence 
    \[ \left( \mathcal{V}(L)/_{{\bf O}^\ast(L,L)}, d_{{\bf O}^\ast(L,L)} \right)
    = (\mathcal{V}(L),\|\cdot\|_{\rm F} ). \qedhere\]
\end{proof}

\begin{proof}[Proof of \Cref{prop:L.reg.exponential.intro}]
    This follows from  \Cref{prop:cone.VL}, \eqref{eq:exponential.structure}, and since  ${\bf O}^\ast(L,L)=\{\rm Id\}$ for $L\in {\bf L}^{\rm reg}$.
\end{proof}

\begin{proof}[Proof of \Cref{thm:L.reg.uniq.geodesic}]
    This follows from \Cref{thm:exponential.general.intro} and since ${\bf O}^\ast(L,L)=\{\rm Id\}$ for $L\in {\bf L}^{\rm reg}$.
\end{proof}

\begin{proof}[Proof of \Cref{cor:L.reg.uniq.geodesic}]
    For $P\in {\bf O}^\ast(L,M)$, consider $V:=MP-L \in \mathbf V(L)$.
    Then, for all $t = 1, \dots, T$,
    \begin{align*}
     \|V_{\cbullet, t} \|_{\rm op}^2
    \leq \|V \|_{\rm F}^2 
    &= d_{\rm ABW}^2([L],[M]) \\
    &<  \min_{s=1,\dots,T}  \lambda_{\min}( (L^\top L)_{s,s})
    \leq
    \lambda_{\min}( (L^\top L)_{t,t}),
    \end{align*}
    where we used \Cref{lem:distance.for.V(L)} for the equality. 
   Therefore, by \Cref{lem:V-psd} we have ${\bf O}^\ast(L,L+V)=\{ \rm Id\}$.
    Since $L+V= MP$,  it  follows that ${\bf O}^\ast(L,M)=\{ P\}$. 
    In particular,  $\{MP-L : P\in {\bf O}^\ast(L,M)\}$ is a singleton and the claim follows from \Cref{cor:unique.geo}.
\end{proof}

\begin{remark}
One could further pursue the geometric analysis of $\mathbf L/_\mathbf O$ within the framework of Alexandrov geometry.
To explain this perspective, recall that for any point $[L]\in \mathbf L^{\rm reg}/_\mathbf O$,
the tangent cone $T([L])$ is isometric to the linear space $\mathcal V(L)$,
which is an Euclidean space of dimension $d_T := \frac{Td(Td+1)}{2}$.
In particular, $\mathbf L^{\rm reg}/_{\mathbf O}$ consists precisely of the so-called regular points of $\mathbf L/_{\mathbf O}$ (i.e.\ points where tangent cones are isometric to Euclidean spaces of matching dimension) and, trivially, forms a dense open subset. 
Under these conditions, if follows from \cite[Theorem~15.13]{AlKaPe24} that $d_T$ coincides with the (linear) dimension of the Alexandrov space $\mathbf L/_{\mathbf O}$.

Finite-dimensional Alexandrov spaces with curvature bounded from below are well studied; see e.g.\ \cite[Chapter~10]{BuBuIv01}. 
In particular, by Theorem~10.10.1 therein, $\mathbf L^{\rm reg}/_{\mathbf O}$ is a $d_T$-dimensional topological manifold. 
While \cite{OtSh94, Pe94} and subsequent work indicate that the subset of regular points of finite-dimensional Alexandrov spaces carries additional geometric structure, we leave this for future research.
Our focus is instead on settings more likely admitting infinite-dimensional generalizations, 
specifically to subspaces of filtered processes $({\rm FP}_2,\AW_2)$ other than ${\rm GP}$, 
where finite-dimensional Alexandrov techniques are not immediately available.
\end{remark}

\subsection*{Acknowledgment}

D.\ Bartl and A.\ Grass are grateful for support by the the Austrian National Bank [Jubil\"aumsfond, project 18983]. 
D.\ Bartl is furthermore grateful for support by the Austrian Science Fund [doi: 10.55776/P34743 and 10.55776/ESP31] 
and a Presidential Young Professorship grant [`Robust statistical learning for complex data'].

\printbibliography

\end{document}